\newtheorem{theorem}{Theorem}[section]
\newtheorem{lemma}[theorem]{Lemma}
\newtheorem{corollary}[theorem]{Corollary}
\theoremstyle{definition}
\theoremstyle{remark}
\newtheorem{remark}[theorem]{Remark}
\numberwithin{algorithm}{section}
\DeclareMathOperator{\diag}{diag}
\newcommand{\alg}[1]{{\sf #1}}
\newcommand{\cmp}{\mathsf{C}}
\newcommand{\defn}[1]{{\em #1}}
\newcommand{\ext}{\mathsf{E}}
\newcommand{\nbr}{\mathsf{N}}
\newcommand{\rd}[1]{\check{#1}}
\newcommand{\sk}[1]{\hat{#1}}
\newcommand{\skel}{\mathcal{Z}}
\newcommand{\sumprime}{\sideset{}{'}\sum}
\newcommand{\tabdash}{\multicolumn{1}{|c}{---}}
\newcommand{\trans}{\mathsf{T}}
\begin{document}                        


\title{Hierarchical Interpolative Factorization for Elliptic Operators: Differential Equations}

\author{Kenneth L. Ho}{Stanford University}
\author{Lexing Ying}{Stanford University}





\begin{abstract}
 This paper introduces the hierarchical interpolative factorization for elliptic partial differential equations (HIF-DE) in two (2D) and three dimensions (3D). This factorization takes the form of an approximate generalized LU/LDL decomposition that facilitates the efficient inversion of the discretized operator. HIF-DE is based on the nested dissection multifrontal method but uses skeletonization on the separator fronts to sparsify the dense frontal matrices and thus reduce the cost. We conjecture that this strategy yields linear complexity in 2D and quasilinear complexity in 3D. Estimated linear complexity in 3D can be achieved by skeletonizing the compressed fronts themselves, which amounts geometrically to a recursive dimensional reduction scheme. Numerical experiments support our claims and further demonstrate the performance of our algorithm as a fast direct solver and preconditioner. MATLAB codes are freely available.
\end{abstract}

\maketitle






\section{Introduction}
This paper considers elliptic partial differential equations (PDEs) of the form
\begin{align}
 -\nabla \cdot (a(x) \nabla u(x)) + b(x) u(x) = f(x), \quad x \in \Omega \subset \mathbb{R}^{d}
 \label{eqn:pde}
\end{align}
with appropriate boundary conditions on $\partial \Omega$, where $a(x)$, $b(x)$, and $f(x)$ are given functions, and $d = 2$ or $3$. Such equations are of fundamental importance in science and engineering and encompass (perhaps with minor modification) many of the PDEs of classical physics, including the Laplace, Helmholtz, Stokes, and time-harmonic Maxwell equations. We will further assume that \eqref{eqn:pde} is not highly indefinite. Discretization using local schemes such as finite differences or finite elements then leads to a linear system
\begin{align}
 Au = f,
 \label{eqn:linear-system}
\end{align}
where $A \in \mathbb{R}^{N \times N}$ is sparse with $u$ and $f$ the discrete analogues of $u(x)$ and $f(x)$, respectively. This paper is concerned with the efficient factorization and solution of such systems.

\subsection{Previous Work}
A large part of modern numerical analysis and scientific computing has been devoted to the solution of \eqref{eqn:linear-system}. We classify existing approaches into several groups. The first consists of classical direct methods like Gaussian elimination or other standard matrix factorizations \cite{golub:1996:johns-hopkins-univ}, which compute the solution exactly (in principle, to machine precision, up to conditioning) without iteration. Naive implementations generally have $O(N^{3})$ complexity but can be heavily accelerated by exploiting sparsity \cite{davis:2006:siam}. A key example is the nested dissection multifrontal method (MF) \cite{duff:1983:acm-trans-math-software,george:1973:siam-j-numer-anal,liu:1992:siam-rev}, which performs elimination according to a special hierarchy of separator fronts in order to minimize fill-in. These fronts correspond geometrically to the cell interfaces in a domain partitioning and grow as $O(N^{1/2})$ in two dimensions (2D) and $O(N^{2/3})$ in three dimensions (3D), resulting in solver complexities of $O(N^{3/2})$ and $O(N^{2})$, respectively. This is a significant improvement and, indeed, MF has proven very effective in many environments. However, it remains unsuitable for truly large-scale problems, especially in 3D.

The second group is that of iterative methods \cite{saad:2003:siam}, with conjugate gradient (CG) \cite{hestenes:1952:j-res-nat-bur-stand,van-der-vorst:1992:siam-j-sci-stat-comput} and multigrid \cite{brandt:1977:math-comp,hackbusch:1985:springer,xu:1992:siam-rev} among the most popular techniques. These typically work well when $a(x)$ and $b(x)$ are smooth, in which case the number of iterations required is small and optimal $O(N)$ complexity can be achieved. However, the iteration count can grow rapidly in the presence of ill-conditioning, which can arise when the coefficient functions lack regularity or have high contrast. In such cases, convergence can be delicate and specialized preconditioners are often required. Furthermore, iterative methods can be inefficient for systems involving multiple right-hand sides or low-rank updates, which is an important setting for many applications of increasing interest, including time stepping, inverse problems, and design.

The third group covers rank-structured direct solvers, which exploit the observation that certain off-diagonal blocks of $A$ and $A^{-1}$ are numerically low-rank \cite{bebendorf:2005:math-comp,bebendorf:2003:numer-math,borm:2010:numer-math,chandrasekaran:2010:siam-j-matrix-anal-appl} in order to dramatically lower the cost. The seminal work in this area is due to Hackbusch et al.\ \cite{hackbusch:1999:computing,hackbusch:2002:computing,hackbusch:2000:computing}, whose $\mathcal{H}$- and $\mathcal{H}^{2}$-matrices have been shown to achieve linear or quasilinear complexity. These methods were originally introduced for integral equations characterized by structured dense matrices but apply also to PDEs as a special case. Although their work has had significant theoretical impact, in practice, the constants implicit in the asymptotic scalings tend to be quite large due to the recursive nature of the inversion algorithms, the use of expensive hierarchical matrix-matrix multiplication, and the lack of sparsity optimizations.

More recent developments aimed at improving practical performance have combined MF with structured matrix algebra on the dense frontal matrices only. This better exploits the inherent sparsity of $A$ and has been carried out under both the $\mathcal{H}$- \cite{grasedyck:2009:numer-math,schmitz:2012:j-comput-phys,schmitz:2014:j-comput-phys} and hierarchically semiseparable (HSS) \cite{gillman:2014:siam-j-sci-comput,gillman:2014:adv-comput-math,xia:2013:siam-j-sci-comput,xia:2013:siam-j-matrix-anal-appl,xia:2009:siam-j-matrix-anal-appl} matrix frameworks, among other related schemes \cite{amestoy:siam-j-sci-comput,aminfar:arxiv,martinsson:2009:j-sci-comput}. Those under the former retain their quasilinear complexities and have improved constants but can still be somewhat expensive. On the other hand, those using HSS operations, which usually have much more favorable constants, are optimal in 2D but require $O(N^{4/3})$ work in 3D. In principle, it is possible to further reduce this to $O(N)$ work by using multi-layer HSS representations, but this procedure is quite complicated and has yet to be achieved.

\subsection{Contributions}
In this paper, we introduce the hierarchical interpolative factorization for PDEs (HIF-DE), which produces an approximate generalized LU/LDL decomposition of $A$ with linear or quasilinear complexity estimates. HIF-DE is based on MF but augments it with frontal compression using a matrix sparsification technique that we call skeletonization. The resulting algorithm is similar in structure to the accelerated MF solvers above and is sufficient for estimated scalings of $O(N)$ in 2D and $O(N \log N)$ in 3D. Unlike \cite{gillman:2014:siam-j-sci-comput,gillman:2014:adv-comput-math,grasedyck:2009:numer-math,schmitz:2012:j-comput-phys,schmitz:2014:j-comput-phys,xia:2009:siam-j-matrix-anal-appl}, however, which keep the entire fronts but work with them implicitly using fast structured methods, our sparsification approach allows us to reduce the fronts explicitly (see also \cite{xia:2013:siam-j-sci-comput,xia:2013:siam-j-matrix-anal-appl}). This obviates the need for internal hierarchical matrix representations and substantially simplifies the algorithm. Importantly, it also makes any additional compression straightforward to accommodate, thereby providing a ready means to achieve estimated $O(N)$ complexity in 3D by skeletonizing the compressed fronts themselves. This corresponds geometrically to a recursive dimensional reduction, whose interpretation is directly enabled by the skeletonization formalism.

Figure \ref{fig:schematic} shows a schematic of HIF-DE as compared to MF in 2D.
\begin{figure}
 \includegraphics{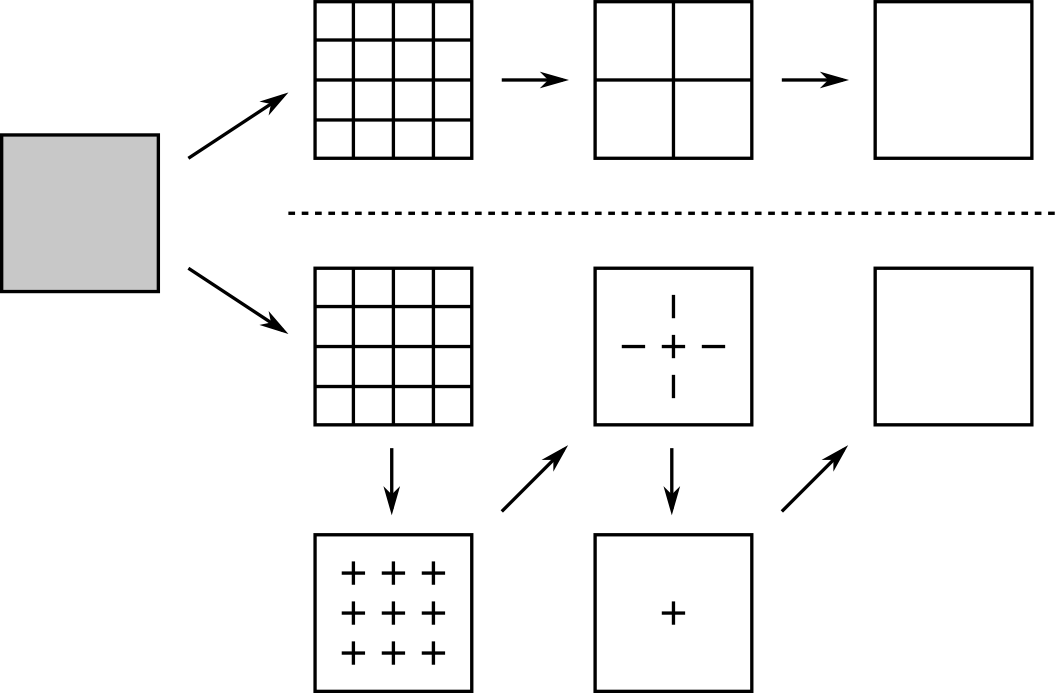}
 \caption{Schematic of MF (top) and HIF-DE (bottom) in 2D. The gray box (left) represents a uniformly discretized square; the lines in the interior of the boxes (right) denote the remaining DOFs after each level of elimination or skeletonization.}
 \label{fig:schematic}
\end{figure}
In MF (top), the domain is partitioned by a set of separators into ``interior'' square cells at each level of a tree hierarchy. Each cell is eliminated starting from the finest level to the coarsest, leaving degrees of freedom (DOFs) only on the separators, which constitute the so-called fronts. This process can be understood as the compression of data from the cells to their interfaces, which evidently grow as we march up the tree, ultimately leading to the observed $O(N^{3/2})$ complexity.

In contrast, in HIF-DE (bottom), we start by eliminating interior cells as in MF but, before proceeding further, perform an additional level of compression by skeletonizing the separators. For this, we view the separator DOFs as living on the interfacial edges of the interior cells then skeletonize each cell edge. This respects the one-dimensional (1D) structure of the separator geometry and allows more DOFs to be eliminated, in effect reducing each edge to only those DOFs near its boundary. Significantly, this occurs {\em without any loss of existing sparsity}. The combination of interior cell elimination and edge skeletonization is then repeated up the tree, with the result that the frontal growth is now suppressed. The reduction from 2D (square cells) to 1D (edges) to zero dimensions (0D) (points) is completely explicit. Extension to 3D is immediate by eliminating interior cubic cells then skeletonizing cubic faces at each level to execute a reduction from 3D to 2D to 1D at a total estimated cost of $O(N \log N)$. We can further reduce this to $O(N)$ (but at the price of introducing some fill-in) by adding subsequent cubic edge skeletonization at each level for full reduction to 0D. This tight control of the front size is critical for achieving near-optimal scaling.

Once the factorization has been constructed, it can be used to rapidly apply $A^{-1}$ and therefore serves as a fast direct solver or preconditioner, depending on the accuracy. (It can also be used to apply $A$ itself, but this is not particularly advantageous since $A$ typically has only $O(N)$ nonzeros.) Other capabilities are possible, too, though they will not be pursued here.

HIF-DE can also be understood in relation to the somewhat more general \defn{hierarchical interpolative factorization for integral equations} (HIF-IE) described in the companion paper \cite{ho:comm-pure-appl-math}, which, like other structured dense methods, can apply to PDEs as a special case. However, HIF-IE does not make use of sparsity and so is not very competitive in practice. HIF-DE remedies this by essentially embedding HIF-IE into the framework of MF in order to maximally exploit sparsity.

Extensive numerical experiments reveal strong evidence for quasilinear complexity and demonstrate that HIF-DE can accurately approximate elliptic partial differential operators in a variety of settings with high practical efficiency.

\subsection{Outline}
The remainder of this paper is organized as follows. In Section \ref{sec:prelim}, we introduce the basic tools needed for our algorithm, including our new skeletonization operation. In Section \ref{sec:mf}, we review MF, which will serve to establish the necessary algorithmic foundation as well as to highlight its fundamental difficulties. In Section \ref{sec:hifde}, we present HIF-DE as an extension of MF with frontal skeletonization corresponding to recursive dimensional reduction. Although we cannot yet provide a rigorous complexity analysis, estimates based on well-supported rank assumptions suggest that HIF-DE achieves linear or quasilinear complexity. This conjecture is borne out by numerical experiments, which we detail in Section \ref{sec:results}. Finally, Section \ref{sec:conclusion} concludes with some discussion and future directions.

\section{Preliminaries}
\label{sec:prelim}
In this section, we first list our notational conventions and then describe the basic elements of our algorithm.

Uppercase letters will generally denote matrices, while the lowercase letters $c$, $p$, $q$, $r$, and $s$ denote ordered sets of indices, each of which is associated with a DOF in the problem. For a given index set $c$, its cardinality is written $|c|$. The (unordered) complement of $c$ is given by $c^{\cmp}$, with the parent set to be understood from the context. The uppercase letter $C$ is reserved to denote a collection of disjoint index sets.

Given a matrix $A$, $A_{pq}$ is the submatrix with rows and columns restricted to the index sets $p$ and $q$, respectively. We also use the MATLAB notation $A_{:,q}$ to denote the submatrix with columns restricted to $q$. The \defn{neighbor set} of an index set $c$ with respect to $A$ is then $c^{\nbr} = \{ i \notin c : \text{$A_{i,c}$ or $A_{c,i} \neq 0$} \}$.

Throughout, $\| \cdot \|$ refers to the $2$-norm.

For simplicity, we hereafter assume that the matrix $A$ in \eqref{eqn:linear-system} is symmetric, though this is not strictly necessary \cite{ho:comm-pure-appl-math}.

\subsection{Sparse Elimination}
\label{sec:sparse-elim}
Let
\begin{align}
 A =
 \begin{bmatrix}
  A_{pp} & A_{qp}^{\trans}\\
  A_{qp} & A_{qq} & A_{rq}^{\trans}\\
  & A_{rq} & A_{rr}
 \end{bmatrix}
 \label{eqn:sparse-matrix}
\end{align}
be a symmetric matrix defined over the indices $(p, q, r)$. This matrix structure often appears in sparse PDE problems such as \eqref{eqn:linear-system}, where, for example, $p$ corresponds to the interior DOFs of a region $\mathcal{D}$, $q$ to the DOFs on the boundary $\partial \mathcal{D}$, and $r$ to the external region $\Omega \setminus \bar{\mathcal{D}}$, which should be thought of as large. In this setting, the DOFs $p$ and $r$ are separated by $q$ and hence do not directly interact, resulting in the form \eqref{eqn:sparse-matrix}.

Our first tool is quite standard and concerns the efficient elimination of DOFs from such sparse matrices.

\begin{lemma}
 \label{lem:sparse-elim}
 Let $A$ be given by \eqref{eqn:sparse-matrix} and write $A_{pp} = L_{p} D_{p} L_{p}^{\trans}$ in factored form, where $L_{p}$ is a unit triangular matrix (up to permutation). If $A_{pp}$ is nonsingular, then
 \begin{align}
  S_{p}^{\trans} A S_{p} =
  \begin{bmatrix}
   D_{p}\\
   & B_{qq} & A_{rq}^{\trans}\\
   & A_{rq} & A_{rr}
  \end{bmatrix},
  \label{eqn:sparse-elim}
 \end{align}
 where
 \begin{align*}
  S_{p} =
  \begin{bmatrix}
   L_{p}^{-\trans}\\
   & I\\
   & & I
  \end{bmatrix}
  \begin{bmatrix}
   I & -D_{p}^{-1} L_{p}^{-1} A_{qp}^{\trans}\\
   & I\\
   & & I
  \end{bmatrix}
 \end{align*}
 and $B_{qq} = A_{qq} - A_{qp} A_{pp}^{-1} A_{qp}^{\trans}$ is the associated Schur complement.
\end{lemma}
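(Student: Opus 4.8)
The plan is to verify \eqref{eqn:sparse-elim} by direct block matrix multiplication, exploiting the factored form $A_{pp} = L_p D_p L_p^{\trans}$ to keep the bookkeeping clean. First I would write $S_p = S_p^{(1)} S_p^{(2)}$, where $S_p^{(1)} = \diag(L_p^{-\trans}, I, I)$ handles the triangular factorization of the $pp$ block and $S_p^{(2)}$ is the unit upper-triangular block matrix that zeros out the coupling between $p$ and $q$. The computation then proceeds in two stages: first conjugate by $S_p^{(2)}$, then by $S_p^{(1)}$.

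For the first stage, I would compute $(S_p^{(2)})^{\trans} A\, S_p^{(2)}$. The key point is that $S_p^{(2)}$ acts as identity on the $r$ block, so the last row and column involving $A_{rq}$ and $A_{rr}$ are untouched, and in particular the crucial zero in the $(p,r)$ position is preserved (this is exactly where the sparsity structure of \eqref{eqn:sparse-matrix} matters). A routine calculation shows the $pp$ block becomes $A_{pp}$, the $pq$ coupling becomes $A_{qp}^{\trans} - A_{pp} \cdot A_{pp}^{-1} A_{qp}^{\trans} = 0$ after substituting $D_p^{-1} L_p^{-1} = L_p^{\trans} A_{pp}^{-1}$, and the $qq$ block becomes the Schur complement $B_{qq} = A_{qq} - A_{qp} A_{pp}^{-1} A_{qp}^{\trans}$. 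For the second stage, conjugating by $S_p^{(1)} = \diag(L_p^{-\trans}, I, I)$ turns the $pp$ block $A_{pp}$ into $L_p^{-1} A_{pp} L_p^{-\trans} = D_p$ and leaves everything else fixed, since the off-diagonal $p$ entries are already zero. Combining the two stages gives exactly the right-hand side of \eqref{eqn:sparse-elim}. I would also note that nonsingularity of $A_{pp}$ is precisely what guarantees the $L_p D_p L_p^{\trans}$ factorization exists with $D_p$ invertible, so $A_{pp}^{-1} = L_p^{-\trans} D_p^{-1} L_p^{-1}$ is well-defined and all the inverses appearing in $S_p$ make sense.

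There is no real obstacle here; the statement is essentially the standard block $LDL^{\trans}$ elimination step written in a form convenient for the rest of the paper. The only mild subtlety is keeping track of transposes and the order of the two elementary factors in $S_p$, and verifying that the substitution $D_p^{-1} L_p^{-1} A_{qp}^{\trans} = (L_p D_p L_p^{\trans})^{-1} L_p^{\trans} A_{qp}^{\trans} \cdot$ (schematically) correctly collapses the $pq$ coupling to zero; I would carry out this one substitution explicitly and leave the remaining block products as routine. The symmetry of $A$ ensures the result is symmetric, consistent with the displayed form.
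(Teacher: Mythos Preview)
The paper states this lemma without proof, treating it as a standard block $LDL^{\trans}$ elimination; your direct block-multiplication approach is exactly the right way to fill in the details. However, you have the order of the two conjugations reversed. Since $S_p = S_p^{(1)} S_p^{(2)}$, one has
\[
 S_p^{\trans} A S_p = (S_p^{(2)})^{\trans}\bigl[(S_p^{(1)})^{\trans} A\, S_p^{(1)}\bigr] S_p^{(2)},
\]
so the \emph{inner} conjugation is by $S_p^{(1)}$ and the \emph{outer} one by $S_p^{(2)}$, not the other way around.

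This matters for the intermediate claim you make. With your order, the $(p,q)$ block after the first stage is $A_{qp}^{\trans} + A_{pp} E$ with $E = -D_p^{-1} L_p^{-1} A_{qp}^{\trans} = -L_p^{\trans} A_{pp}^{-1} A_{qp}^{\trans}$, which gives $A_{qp}^{\trans} - A_{pp} L_p^{\trans} A_{pp}^{-1} A_{qp}^{\trans}$; this is \emph{not} zero in general (you silently dropped the factor $L_p^{\trans}$ when writing ``$A_{pp}\cdot A_{pp}^{-1} A_{qp}^{\trans}$''). In the correct order, the inner conjugation by $S_p^{(1)}$ first replaces $A_{pp}$ by $D_p$ and $A_{qp}^{\trans}$ by $L_p^{-1} A_{qp}^{\trans}$; then the $(p,q)$ block after the outer conjugation by $S_p^{(2)}$ is $L_p^{-1} A_{qp}^{\trans} + D_p E = L_p^{-1} A_{qp}^{\trans} - D_p D_p^{-1} L_p^{-1} A_{qp}^{\trans} = 0$, and the $(q,q)$ block collapses to $B_{qq}$ exactly as you describe. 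With this swap your argument goes through without further change.
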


Note that the indices $p$ have been decoupled from the rest. Regarding the subsystem in \eqref{eqn:sparse-elim} over the indices $(q, r)$ only, we may therefore say that the DOFs $p$ have been eliminated. The operator $S_{p}$ carries out this elimination, which furthermore is particularly efficient since the interactions involving the large index set $r$ are unchanged. However, some fill-in is generated through the Schur complement $B_{qq}$, which in general is completely dense. Clearly, the requirement that $A_{pp}$ be invertible is satisfied if $A$ is symmetric positive definite (SPD), as is the case for many such problems in practice.

In this paper, we often work with a collection $C$ of disjoint index sets, where $A_{c,c'} = A_{c',c} = 0$ for any $c, c' \in C$ with $c \neq c'$. Applying Lemma \ref{lem:sparse-elim} to each $p = c$, $q = c^{\nbr}$, and $r = (c \cup c^{\nbr})^{\cmp}$ gives $W^{\trans} AW$ for $W = \prod_{c \in C} S_{c}$, where each set of DOFs $c \in C$ has been decoupled from the rest and the matrix product over $C$ can be taken in any order. The resulting matrix has a block diagonal structure over the index groups
\begin{align*}
 \theta = \left( \bigcup_{c \in C} \{ c \} \right) \cup \left\{ s \setminus \bigcup_{c \in C} c \right\},
\end{align*}
where the outer union is to be understood as acting on collections of index sets and $s = \{ 1, \dots, N \}$ is the set of all indices, but with dense fill-in covering $(W^{\trans} AW)_{c^{\nbr},c^{\nbr}}$ for each $c \in C$.

\subsection{Interpolative Decomposition}
Our next tool is the interpolative decomposition (ID) \cite{cheng:2005:siam-j-sci-comput} for low-rank matrices, which we present in a somewhat nonstandard form below (see \cite{ho:comm-pure-appl-math} for details).

\begin{lemma}
 \label{lem:id}
 Let $A = A_{:,q} \in \mathbb{R}^{m \times n}$ with rank $k \leq \min (m, n)$. Then there exist a partitioning $q = \sk{q} \cup \rd{q}$ with $|\sk{q}| = k$ and a matrix $T_{q} \in \mathbb{R}^{k \times n}$ such that $A_{:,\rd{q}} = A_{:,\sk{q}} T_{q}$.
\end{lemma}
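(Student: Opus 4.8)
The plan is to prove existence of the claimed partitioning by a direct column-rank argument, and then to record the standard constructive route through a column-pivoted (rank-revealing) $QR$ factorization, which is also what one would use in an implementation.

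First I would establish the partitioning. Since $A = A_{:,q}$ has column rank $k$, its range is a $k$-dimensional subspace of $\mathbb{R}^{m}$, so there is a choice of $k$ columns of $A$ that are linearly independent; let $\sk{q} \subseteq q$ index such a choice, with $|\sk{q}| = k$, and put $\rd{q} = q \setminus \sk{q}$. The vectors $\{ A_{:,j} \}_{j \in \sk{q}}$ are then a basis for $\operatorname{ran} A$, being linearly independent and lying in a space of dimension $k$. Hence every column of $A$, and in particular every column of $A_{:,\rd{q}}$, is a linear combination of the columns of $A_{:,\sk{q}}$; collecting the corresponding coefficient vectors as the columns of a matrix $T_{q}$ yields $A_{:,\rd{q}} = A_{:,\sk{q}} T_{q}$, which is the assertion.

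For the constructive version, I would compute a column-pivoted $QR$ factorization $A \Pi = Q R$ with $\Pi$ a permutation, $Q = [\, Q_{1} \ Q_{2} \,]$ having orthonormal columns, and $R$ upper triangular. Because $\operatorname{rank} A = k$, pivoting forces $R = \bigl[ \begin{smallmatrix} R_{11} & R_{12} \\ 0 & 0 \end{smallmatrix} \bigr]$ with $R_{11} \in \mathbb{R}^{k \times k}$ nonsingular and $Q_{1}$ having $k$ columns; taking $\sk{q}$ to be the first $k$ pivot indices and $\rd{q}$ the remainder, one reads off $A_{:,\sk{q}} = Q_{1} R_{11}$ and $A_{:,\rd{q}} = Q_{1} R_{12}$, so that $T_{q} := R_{11}^{-1} R_{12}$ does the job. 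This picture also exposes the approximate variant that is actually invoked later: if $A$ has numerical rank $k$ to tolerance $\epsilon$, one stops the pivoted $QR$ after $k$ steps and obtains $\| A_{:,\rd{q}} - A_{:,\sk{q}} T_{q} \| \lesssim \epsilon$.

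There is no genuinely difficult step here — the statement is elementary linear algebra, and as a pure existence claim any spanning subset of columns works. The one subtlety I would flag, though not pursue since it is not part of the statement, is that a naive pivoted $QR$ need not produce a well-bounded $T_{q}$; obtaining the stronger guarantee that the entries of $T_{q}$ are $O(1)$, which matters for the numerical stability of skeletonization, requires a strong rank-revealing factorization in the sense of Gu and Eisenstat, as in the reference cited for the ID.
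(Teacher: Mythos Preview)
Your proof is correct. The paper does not actually supply a proof of this lemma; it states the result, cites the companion paper and \cite{cheng:2005:siam-j-sci-comput} for details, and then discusses exactly the pivoted-QR construction you describe (including the approximate version and the stability caveat about $\|T_{q}\|$), so your write-up is fully consistent with the paper's treatment.
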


We call $\sk{q}$ and $\rd{q}$ the \defn{skeleton} and \defn{redundant} indices, respectively. Lemma \ref{lem:id} states that the redundant columns of $A$ can be interpolated from its skeleton columns. The following shows that the ID can also be viewed as a sparsification operator.

\begin{corollary}
 \label{cor:id-sparse}
 Let $A = A_{:,q}$ be a low-rank matrix. If $q = \sk{q} \cup \rd{q}$ and $T_{q}$ are such that $A_{:,\rd{q}} = A_{:,\sk{q}} T_{q}$, then
 \begin{align*}
  \begin{bmatrix}
   A_{:,\rd{q}} & A_{:,\sk{q}}
  \end{bmatrix}
  \begin{bmatrix}
   I\\
   -T_{q} & I
  \end{bmatrix} =
  \begin{bmatrix}
   0 & A_{:,\sk{q}}
  \end{bmatrix}.
 \end{align*}
\end{corollary}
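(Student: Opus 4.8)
The plan is to derive the identity by a direct block matrix multiplication, invoking the interpolation relation $A_{:,\rd{q}} = A_{:,\sk{q}} T_{q}$ from the hypothesis at the single step where it is needed. First I would match up the block sizes: in the partition $q = \sk{q} \cup \rd{q}$, the matrix $T_{q}$ maps the $|\sk{q}|$-dimensional skeleton space into the $|\rd{q}|$-dimensional redundant space, so the second factor in the product is the $2 \times 2$ block lower-triangular matrix with diagonal blocks $I$ of sizes $|\rd{q}|$ and $|\sk{q}|$ and with $(2,1)$ block $-T_{q}$. Multiplying out, $\begin{bmatrix} A_{:,\rd{q}} & A_{:,\sk{q}} \end{bmatrix} \begin{bmatrix} I \\ -T_{q} & I \end{bmatrix}$ has first block column $A_{:,\rd{q}} - A_{:,\sk{q}} T_{q}$ and second block column $A_{:,\sk{q}}$.

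The only remaining step is to observe that the first block column vanishes by the hypothesis, which leaves exactly $\begin{bmatrix} 0 & A_{:,\sk{q}} \end{bmatrix}$, as claimed. There is no real obstacle here — the statement is a one-line verification — so rather than a difficulty I would instead flag two features worth recording. First, the computation uses nothing about the provenance of $\sk{q}$, $\rd{q}$, and $T_{q}$: any triple satisfying $A_{:,\rd{q}} = A_{:,\sk{q}} T_{q}$ works, not merely one arising from a rank-$k$ ID as in Lemma \ref{lem:id}. Second, the triangular factor on the right is, up to a permutation reinterleaving $\rd{q}$ and $\sk{q}$ within $q$, unit lower triangular and hence invertible; this is what licenses reading the corollary as a sparsification operator, since right-multiplication by an invertible matrix annihilates the redundant columns of $A$ while leaving its skeleton columns untouched — precisely the mechanism underlying the skeletonization operation used later.
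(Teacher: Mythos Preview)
Your proof is correct: the corollary follows immediately from the block multiplication together with the interpolation hypothesis, exactly as you carry it out. The paper itself does not give an explicit proof, treating the statement as an immediate consequence of Lemma~\ref{lem:id}, so your direct verification is precisely the intended argument.
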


In general, let $A_{:,\rd{q}} = A_{:,\sk{q}} T_{q} + E$ for some error matrix $E$. If $\| T_{q} \|$ and $\| E \|$ are not too large, then the reconstruction of $A_{:,\rd{q}}$ is stable and accurate. In this paper, we use the algorithm of \cite{cheng:2005:siam-j-sci-comput} based on a simple pivoted QR decomposition to compute an ID that typically satisfies
\begin{align*}
 \| T_{q} \| \leq \sqrt{4k(n - k)}, \quad \| E \| \leq \sqrt{1 + 4k(n - k)} \sigma_{k + 1} (A),
\end{align*}
where $\sigma_{k + 1} (A)$ is the $(k + 1)$st largest singular value of $A$, at a cost of $O(kmn)$ operations. Fast algorithms based on random sampling are also available \cite{halko:2011:siam-rev}, but these can incur some loss of accuracy (see also Section \ref{sec:hifde:3dx}).

The ID can be applied in both fixed and adaptive rank settings. In the former, the rank $k$ is specified, while, in the latter, the approximation error is specified and the rank adjusted to achieve (an estimate of) it. Hereafter, we consider the ID only in the adaptive sense, using the relative magnitudes of the pivots to adaptively select $k$ such that $\| E \| \lesssim \epsilon \| A \|$ for any specified relative precision $\epsilon > 0$.

\subsection{Skeletonization}
We now combine Lemmas \ref{lem:sparse-elim} and \ref{lem:id} to efficiently eliminate redundant DOFs from dense matrices with low-rank off-diagonal blocks.

\begin{lemma}
 \label{lem:skel}
 Let
 \begin{align*}
  A =
  \begin{bmatrix}
   A_{pp} & A_{qp}^{\trans}\\
   A_{qp} & A_{qq}
  \end{bmatrix}
 \end{align*}
 be symmetric with $A_{qp}$ low-rank, and let $p = \sk{p} \cup \rd{p}$ and $T_{p}$ be such that $A_{q \rd{p}} = A_{q \sk{p}} T_{p}$. Without loss of generality, write
 \begin{align*}
  A =
  \begin{bmatrix}
   A_{\rd{p} \rd{p}} & A_{\sk{p} \rd{p}}^{\trans} & A_{q \rd{p}}^{\trans}\\
   A_{\sk{p} \rd{p}} & A_{\sk{p} \sk{p}} & A_{q \sk{p}}^{\trans}\\
   A_{q \rd{p}} & A_{q \sk{p}} & A_{qq}
  \end{bmatrix}
 \end{align*}
 and define
 \begin{align*}
  Q_{p} =
  \begin{bmatrix}
   I\\
   -T_{p} & I\\
   & & I
  \end{bmatrix}.
 \end{align*}
 Then
 \begin{align}
  Q_{p}^{\trans} A Q_{p} =
  \begin{bmatrix}
   B_{\rd{p} \rd{p}} & B_{\sk{p} \rd{p}}^{\trans}\\
   B_{\sk{p} \rd{p}} & A_{\sk{p} \sk{p}} & A_{q \sk{p}}^{\trans}\\
   & A_{q \sk{p}} & A_{qq}
  \end{bmatrix},
  \label{eqn:id-sparse}
 \end{align}
 where
 \begin{align*}
  B_{\rd{p} \rd{p}} &= A_{\rd{p} \rd{p}} - T_{p}^{\trans} A_{\sk{p} \rd{p}} - A_{\sk{p} \rd{p}}^{\trans} T_{p} + T_{p}^{\trans} A_{\sk{p} \sk{p}} T_{p},\\
  B_{\sk{p} \rd{p}} &= A_{\sk{p} \rd{p}} - A_{\sk{p} \sk{p}} T_{p},
 \end{align*}
 so
 \begin{align}
  \label{eqn:skel}
  S_{\rd{p}}^{\trans} Q_{p}^{\trans} A Q_{p} S_{\rd{p}} =
  \begin{bmatrix}
   D_{\rd{p}}\\
   & B_{\sk{p} \sk{p}} & A_{q \sk{p}}^{\trans}\\
   & A_{q \sk{p}} & A_{qq}
  \end{bmatrix} \equiv \skel_{p} (A),
 \end{align}
 where $S_{\rd{p}}$ is the elimination operator of Lemma \ref{lem:sparse-elim} associated with $\rd{p}$ and $B_{\sk{p} \sk{p}} = A_{\sk{p} \sk{p}} - B_{\sk{p} \rd{p}} B_{\rd{p} \rd{p}}^{-1} B_{\sk{p} \rd{p}}^{\trans}$, assuming that $B_{\rd{p}\rd{p}}$ is nonsingular.
\end{lemma}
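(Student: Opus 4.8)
The statement is essentially an assembly of the two preceding lemmas, so the plan is to verify it by direct block matrix computation in two stages corresponding to the two factors $Q_p$ and $S_{\rd p}$.

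First I would compute $Q_p^{\trans} A Q_p$ explicitly. Since $Q_p$ acts nontrivially only on the $(\rd p, \sk p)$ block, partition $A$ as in the statement and carry out the multiplication block by block. The key observation driving the simplification is the ID relation $A_{q\rd p} = A_{q\sk p} T_p$ (and its transpose $A_{\rd p q}^{\trans}$, using symmetry): when the $q$-row of $Q_p^{\trans} A Q_p$ is formed, the $(\rd p)$-column entry becomes $A_{q\rd p} - A_{q\sk p} T_p = 0$, which is precisely the sparsification of Corollary \ref{cor:id-sparse}. This kills the off-diagonal coupling between $\rd p$ and $q$, leaving the block form in \eqref{eqn:id-sparse}. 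The remaining new blocks $B_{\rd p \rd p}$ and $B_{\sk p \rd p}$ are then read off directly from the $(\rd p, \rd p)$ and $(\sk p, \rd p)$ positions; these are routine expansions of $Q_p^{\trans} A Q_p$ and match the formulas given. It is worth noting that the $\sk p$--$q$ and $q$--$q$ blocks are untouched, which is the ``no loss of sparsity'' point emphasized in the text.

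Second, having reduced to \eqref{eqn:id-sparse}, observe that the matrix now has exactly the sparse block structure \eqref{eqn:sparse-matrix} of Lemma \ref{lem:sparse-elim}, with the identification $p \mapsto \rd p$, $q \mapsto \sk p$, and $r \mapsto q$: the index set $\rd p$ interacts only with $\sk p$ and not with $q$ (that coupling was just eliminated), so the hypotheses of Lemma \ref{lem:sparse-elim} are met provided $B_{\rd p \rd p}$ is nonsingular, which is assumed. Applying that lemma with elimination operator $S_{\rd p}$ decouples $\rd p$, producing the block-diagonal factor $D_{\rd p}$ from $B_{\rd p \rd p} = L_{\rd p} D_{\rd p} L_{\rd p}^{\trans}$, updates the $(\sk p, \sk p)$ block by the Schur complement $B_{\sk p \sk p} = A_{\sk p \sk p} - B_{\sk p \rd p} B_{\rd p \rd p}^{-1} B_{\sk p \rd p}^{\trans}$, and leaves the interactions with the large set $q$ (namely $A_{q\sk p}$ and $A_{qq}$) unchanged. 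Composing the two operators gives $S_{\rd p}^{\trans} Q_p^{\trans} A Q_p S_{\rd p}$ in the claimed form \eqref{eqn:skel}, and we define this to be $\skel_p(A)$.

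There is no real obstacle here: the result is true essentially by construction, being a concatenation of Corollary \ref{cor:id-sparse} (the sparsification step) and Lemma \ref{lem:sparse-elim} (the elimination step), and the only thing to check carefully is the bookkeeping in the first block multiplication. The one hypothesis that must be carried along explicitly is the invertibility of $B_{\rd p \rd p}$, needed to form its $LDL^{\trans}$ factorization and Schur complement; this is exactly the analogue of the ``$A_{pp}$ nonsingular'' requirement in Lemma \ref{lem:sparse-elim} and is stated in the hypotheses. In practice it is guaranteed when $A$ is SPD, since $B_{\rd p \rd p}$ is then a principal submatrix of the congruent (hence still SPD) matrix $Q_p^{\trans} A Q_p$.
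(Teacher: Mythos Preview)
Your proposal is correct and matches the paper's approach: the paper does not give a formal proof of this lemma, presenting it instead as a direct combination of Corollary~\ref{cor:id-sparse} (the ID sparsification that zeros out the $\rd p$--$q$ coupling) followed by Lemma~\ref{lem:sparse-elim} (elimination of $\rd p$ via its Schur complement), which is exactly the two-stage verification you outline.
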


In essence, the ID sparsifies $A$ by decoupling $\rd{p}$ from $q$, thereby allowing it to be eliminated using efficient sparse techniques. We refer to this procedure as \defn{skeletonization} since only the skeletons $\sk{p}$ remain. Note that the interactions involving $q = p^{\cmp}$ are unchanged. A very similar approach has previously been described in the context of HSS Cholesky decompositions \cite{xia:2010:numer-linear-algebra-appl} by combining the structure-preserving rank-revealing factorization \cite{xia:2012:siam-j-matrix-anal-appl} with reduced matrices \cite{xia:2013:siam-j-sci-comput}.

In general, the ID often only approximately sparsifies $A$ (for example, if its off-diagonal blocks are low-rank only to a specified numerical precision) so that \eqref{eqn:id-sparse} and consequently \eqref{eqn:skel} need not hold exactly. In such cases, the skeletonization operator $\skel_{p} (\cdot)$ should be interpreted as also including an intermediate truncation step that enforces sparsity explicitly. For notational convenience, however, we will continue to identify the left- and right-hand sides of \eqref{eqn:skel} by writing $\skel_{p} (A) \approx S_{\rd{p}}^{\trans} Q_{p}^{\trans} A Q_{p} S_{\rd{p}}$, with the truncation to be understood implicitly.

In this paper, we often work with a collection $C$ of disjoint index sets, where $A_{c,c^{\cmp}}$ and $A_{c^{\cmp},c}$ are numerically low-rank for all $c \in C$. Applying Lemma \ref{lem:skel} to all $c \in C$ gives
\begin{align*}
 \skel_{C} (A) \approx U^{\trans} AU, \quad U = \prod_{c \in C} Q_{c} S_{\rd{c}},
\end{align*}
where the redundant DOFs $\rd{c}$ for each $c \in C$ have been decoupled from the rest and the matrix product over $C$ can be taken in any order. The resulting skeletonized matrix $\skel_{C} (A)$ is significantly sparsified and has a block diagonal structure over the index groups
\begin{align*}
 \theta = \left( \bigcup_{c \in C} \{ \rd{c} \} \right) \cup \left\{ s \setminus \bigcup_{c \in C} \rd{c} \right\}.
\end{align*}

\section{Multifrontal Factorization}
\label{sec:mf}
In this section, we review MF, which constructs a multilevel LDL decomposition of $A$ by using Lemma \ref{lem:sparse-elim} to eliminate DOFs according to a hierarchical sequence of domain separators. Our presentation will tend to emphasize its geometric aspects \cite{george:1973:siam-j-numer-anal}; more algebraic treatments can be found in \cite{duff:1983:acm-trans-math-software,liu:1992:siam-rev}.

We begin with a detailed description of MF in 2D before extending to 3D in the natural way. The same presentation framework will also be used for HIF-DE in Section \ref{sec:hifde}, which we hope will help make clear the specific innovations responsible for its improved complexity estimates.

\subsection{Two Dimensions}
\label{sec:mf:2d}
Consider the PDE \eqref{eqn:pde} on $\Omega = (0, 1)^{2}$ with zero Dirichlet boundary conditions, discretized using finite differences via the five-point stencil over a uniform $n \times n$ grid for simplicity. More general domains, boundary conditions, and discretizations can be handled without difficulty, but the current setting will serve to fix ideas. Let $h$ be the step size in each direction and assume that $n = 1/h = 2^{L} m$, where $m = O(1)$ is a small integer. Integer pairs $j = (j_{1}, j_{2})$ index the grid points $x_{j} = hj = h(j_{1}, j_{2})$ for $1 \leq j_{1}, j_{2} \leq n - 1$. The discrete system \eqref{eqn:linear-system} then reads
\begin{multline*}
 \frac{1}{h^{2}} \left( a_{j - e_{1} / 2} + a_{j + e_{1} / 2} + a_{j - e_{2} / 2} + a_{j + e_{2} / 2} \right) u_{j}\\
 - \frac{1}{h^{2}} \left( a_{j - e_{1} / 2} u_{j - e_{1}} + a_{j + e_{1} / 2} u_{j + e_{1}} + a_{j - e_{2} / 2} u_{j - e_{2}} + a_{j + e_{2} / 2} u_{j + e_{2}} \right) + b_{j} u_{j} = f_{j}
\end{multline*}
at each $x_{j}$, where $a_{j} = a(hj)$ is sampled on the ``staggered'' dual grid for $e_{1} = (1, 0)$ and $e_{2} = (0, 1)$ the unit coordinate vectors, $b_{j} = b(x_{j})$, $f_{j} = f(x_{j})$, and $u_{j}$ is the approximation to $u(x_{j})$. The resulting matrix $A$ is sparse and symmetric, consisting only of nearest-neighbor interactions. The total number of DOFs is $N = (n - 1)^{2}$, each of which is associated with a point $x_{j}$ and an index in $s$.

The algorithm proceeds by eliminating DOFs level by level. At each level $\ell$, the set of DOFs that have not been eliminated are called \defn{active} with indices $s_{\ell}$. Initially, we set $A_{0} = A$ and $s_{0} = s$. Figure \ref{fig:mf2} shows the active DOFs at each level for a representative example.
\begin{figure}
 \centering
 \begin{subfigure}{0.2\textwidth}
  \includegraphics[width=\textwidth]{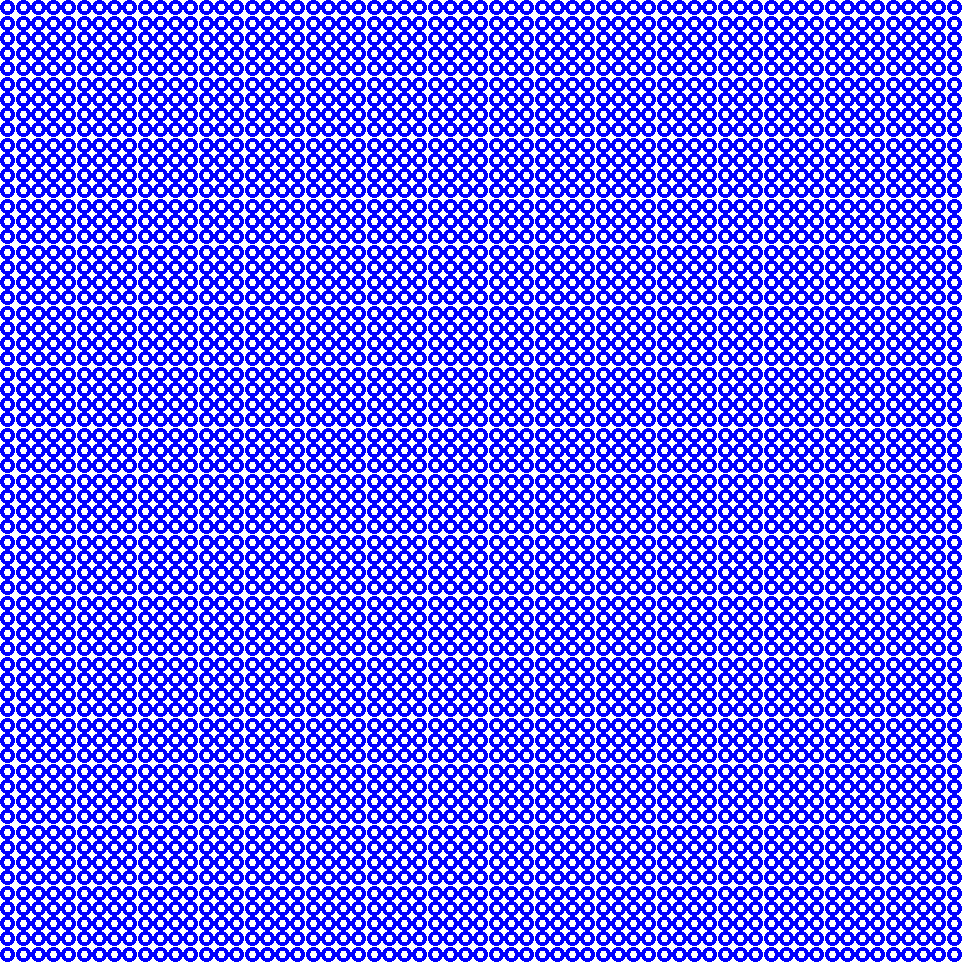}
  \caption*{$\ell = 0$}
 \end{subfigure}
 \quad
 \begin{subfigure}{0.2\textwidth}
  \includegraphics[width=\textwidth]{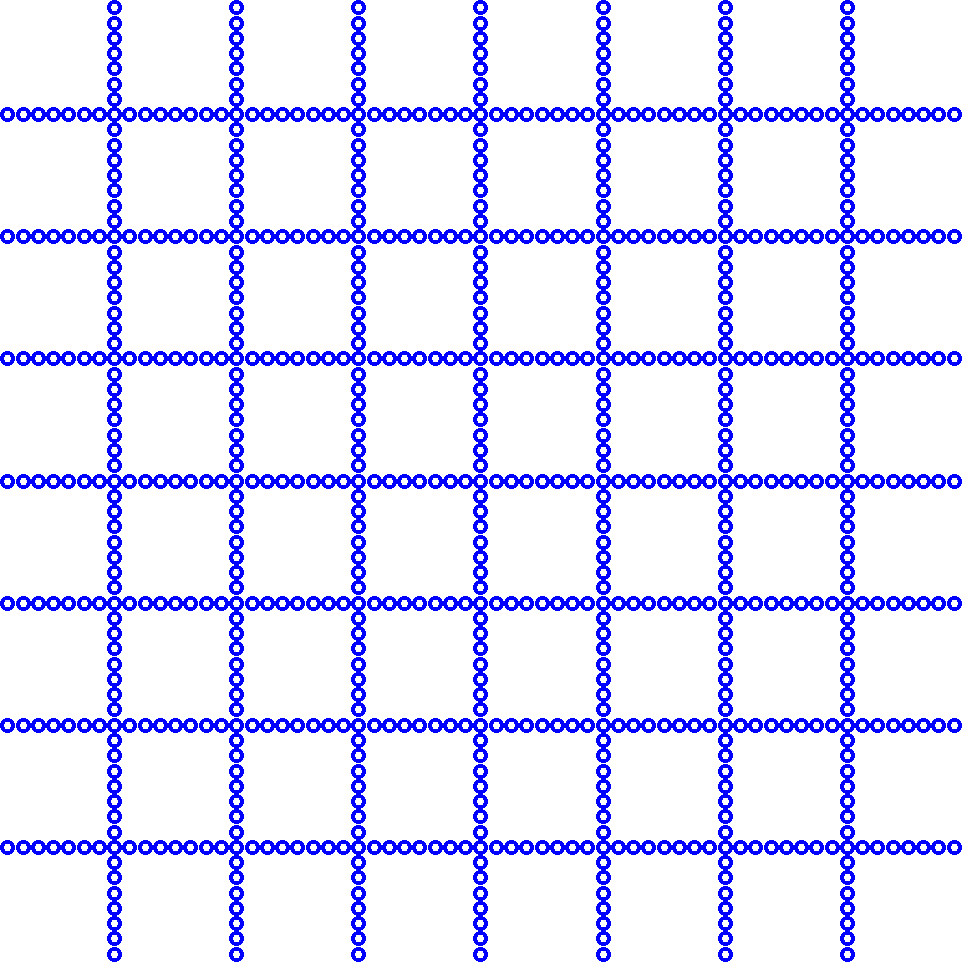}
  \caption*{$\ell = 1$}
 \end{subfigure}
 \quad
 \begin{subfigure}{0.2\textwidth}
  \includegraphics[width=\textwidth]{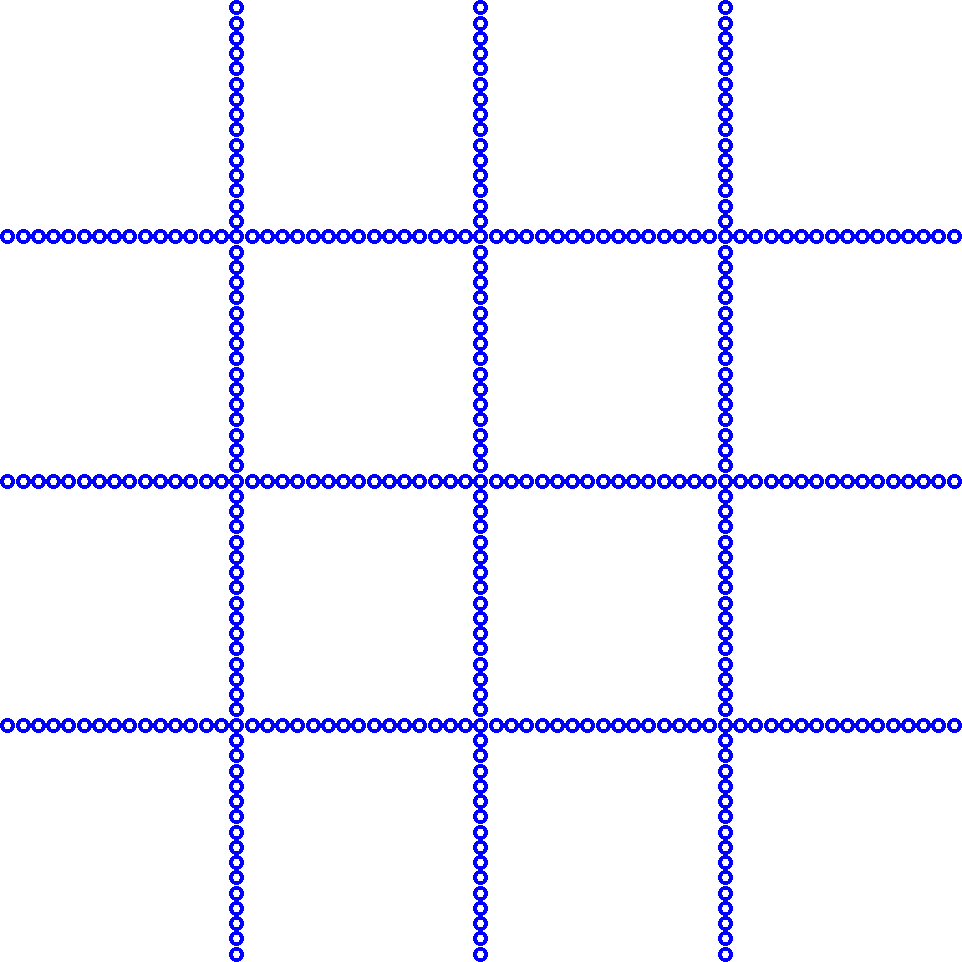}
  \caption*{$\ell = 2$}
 \end{subfigure}
 \quad
 \begin{subfigure}{0.2\textwidth}
  \includegraphics[width=\textwidth]{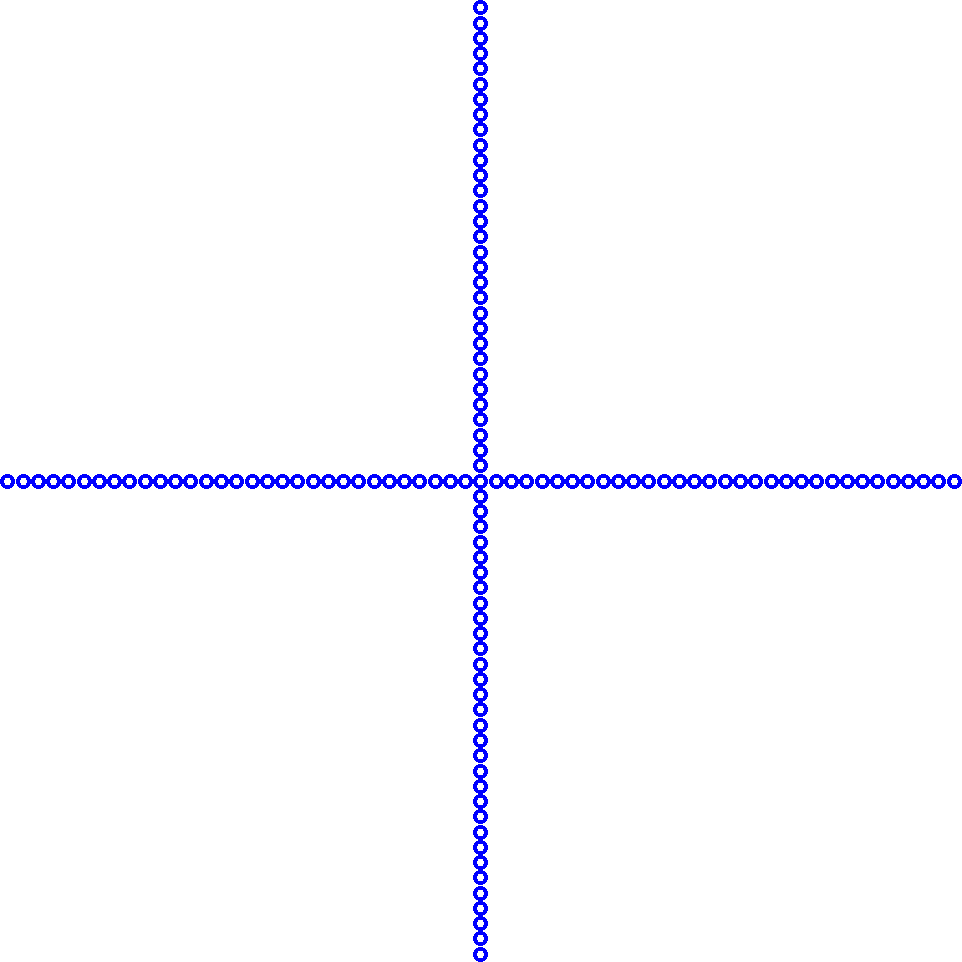}
  \caption*{$\ell = 3$}
 \end{subfigure}
 \caption{Active DOFs at each level $\ell$ of MF in 2D.}
 \label{fig:mf2}
\end{figure}

\subsubsection*{Level $0$}
Defined at this stage are $A_{0}$ and $s_{0}$. Partition $\Omega$ by 1D separators $mh(j_{1}, \cdot)$ and $mh(\cdot, j_{2})$ for $1 \leq j_{1}, j_{2} \leq 2^{L} - 1$ every $mh = n / 2^{L}$ units in each direction into interior square cells $mh(j_{1} - 1, j_{1}) \times mh(j_{2} - 1, j_{2})$ for $1 \leq j_{1}, j_{2} \leq 2^{L}$. Observe that distinct cells do not interact with each other since they are buffered by the separators. Let $C_{0}$ be the collection of index sets corresponding to the active DOFs of each cell. Then elimination with respect to $C_{0}$ gives
\begin{align*}
 A_{1} = W_{0}^{\trans} A_{0} W_{0}, \quad W_{0} = \prod_{c \in C_{0}} S_{c},
\end{align*}
where the DOFs $\bigcup_{c \in C_{0}} c$ have been eliminated (and marked inactive). Let $s_{1} = s_{0} \setminus \bigcup_{c \in C_{0}} c$ be the remaining active DOFs. The matrix $A_{1}$ is block diagonal with block partitioning
\begin{align*}
 \theta_{1} = \left( \bigcup_{c \in C_{0}} \{ c \} \right) \cup \{ s_{1} \}.
\end{align*}

\subsubsection*{Level $\ell$}
Defined at this stage are $A_{\ell}$ and $s_{\ell}$. Partition $\Omega$ by 1D separators $2^{\ell} mh(j_{1}, \cdot)$ and $2^{\ell} mh(\cdot, j_{2})$ for $1 \leq j_{1}, j_{2} \leq 2^{L - \ell} - 1$ every $2^{\ell} mh = n / 2^{L - \ell}$ units in each direction into interior square cells $2^{\ell} mh(j_{1} - 1, j_{1}) \times 2^{\ell} mh(j_{2} - 1, j_{2})$ for $1 \leq j_{1}, j_{2} \leq 2^{L - \ell}$. Let $C_{\ell}$ be the collection of index sets corresponding to the active DOFs of each cell. Elimination with respect to $C_{\ell}$ then gives
\begin{align*}
 A_{\ell + 1} = W_{\ell}^{\trans} A_{\ell} W_{\ell}, \quad W_{\ell} = \prod_{c \in C_{\ell}} S_{c},
\end{align*}
where the DOFs $\bigcup_{c \in C_{\ell}} c$ have been eliminated. The matrix $A_{\ell + 1}$ is block diagonal with block partitioning
\begin{align*}
 \theta_{\ell + 1} = \left( \bigcup_{c \in C_{0}} \{ c \} \right) \cup \cdots \cup \left( \bigcup_{c \in C_{\ell}} \{ c \} \right) \cup \{ s_{\ell + 1} \},
\end{align*}
where $s_{\ell + 1} = s_{\ell} \setminus \bigcup_{c \in C_{\ell}} c$.

\subsubsection*{Level $L$}
Finally, we have $A_{L}$ and $s_{L}$, where $D \equiv A_{L}$ is block diagonal with block partitioning
\begin{align*}
 \theta_{L} = \left( \bigcup_{c \in C_{0}} \{ c \} \right) \cup \cdots \cup \left( \bigcup_{c \in C_{L - 1}} \{ c \} \right) \cup \{ s_{L} \}.
\end{align*}
Combining over all levels gives
\begin{align*}
 D = W_{L - 1}^{\trans} \cdots W_{0}^{\trans} A W_{0} \cdots W_{L - 1},
\end{align*}
where each $W_{\ell}$ is a product of unit upper triangular matrices, each of which can be inverted simply by negating its off-diagonal entries. Therefore,
\begin{subequations}
 \label{eqn:mf}
 \begin{align}
  A &= W_{0}^{-\trans} \cdots W_{L - 1}^{-\trans} D W_{L - 1}^{-1} \cdots W_{0}^{-1} \equiv F,\\
  A^{-1} &= W_{0} \cdots W_{L - 1} D^{-1} W_{L - 1}^{\trans} \cdots W_{0}^{\trans} = F^{-1}.
 \end{align}
\end{subequations}
The factorization $F$ is an LDL decomposition of $A$ that is numerically exact (to machine precision, up to conditioning), whose inverse $F^{-1}$ can be applied as a fast direct solver. Clearly, if $A$ is SPD, then so are $F$ and $F^{-1}$; in this case, $F$ can, in fact, be written as a Cholesky decomposition by storing $D$ in Cholesky form. We emphasize that $F$ and $F^{-1}$ are not assembled explicitly and are used only in their factored representations.

The entire procedure is summarized compactly as Algorithm \ref{alg:mf}. In general, we can construct the cell partitioning at each level using an adaptive quadtree \cite{samet:1984:acm-comput-surv}, which recursively subdivides the domain until each node contains only $O(1)$ DOFs, provided that some appropriate postprocessing is done to define ``thin'' separators in order to optimally exploit sparsity (see Section \ref{sec:hifde:3dx}).
\begin{algorithm}
 \caption{MF.}
 \label{alg:mf}
 \begin{algorithmic}
  \State $A_{0} = A$ \Comment{initialize}
  \For{$\ell = 0, 1, \dots, L - 1$} \Comment{loop from finest to coarsest level}
   \State $A_{\ell + 1} = W_{\ell}^{\trans} A_{\ell} W_{\ell}$ \Comment{eliminate interior cells}
  \EndFor
  \State $A = W_{0}^{-\trans} \cdots W_{L - 1}^{-\trans} A_{L} W_{L - 1}^{-1} \cdots W_{0}^{-1}$ \Comment{LDL decomposition}
 \end{algorithmic}
\end{algorithm}

\subsection{Three Dimensions}
\label{sec:mf:3d}
Consider now the analogous setting in 3D, where $\Omega = (0, 1)^{3}$ is discretized using the seven-point stencil over a uniform $n \times n \times n$ mesh with grid points $x_{j} = hj = h(j_{1}, j_{2}, j_{3})$ for $j = (j_{1}, j_{2}, j_{3})$:
\begin{multline*}
 \frac{1}{h^{2}} \left( a_{j - e_{1} / 2} + a_{j + e_{1} / 2} + a_{j - e_{2} / 2} + a_{j + e_{2} / 2} + a_{j - e_{3} / 2} + a_{j + e_{3} / 2} \right) u_{j}\\
 - \frac{1}{h^{2}} \left( a_{j - e_{1} / 2} u_{j - e_{1}} + a_{j + e_{1} / 2} u_{j + e_{1}} + a_{j - e_{2} / 2} u_{j - e_{2}} + a_{j + e_{2} / 2} u_{j + e_{2}} \right.\\
 \left. + a_{j - e_{3} / 2} u_{j - e_{3}} + a_{j + e_{3} / 2} u_{j + e_{3}} \right) + b_{j} u_{j} = f_{j},
\end{multline*}
where $e_{1} = (1, 0, 0)$, $e_{2} = (0, 1, 0)$, and $e_{3} = (0, 0, 1)$. The total number of DOFs is $N = (n - 1)^{3}$.

The algorithm extends in the natural way with 2D separators $2^{\ell} mh(j_{1}, \cdot, \cdot)$, $2^{\ell} mh(\cdot, j_{2}, \cdot)$, and $2^{\ell} mh(\cdot, \cdot, j_{3})$ for $1 \leq j_{1}, j_{2}, j_{3} \leq 2^{L - \ell} - 1$ every $2^{\ell} mh = n / 2^{L - \ell}$ units in each direction now partitioning $\Omega$ into interior cubic cells $2^{\ell} mh(j_{1} - 1, j_{1}) \times 2^{\ell} mh(j_{2} - 1, j_{2}) \times 2^{\ell} mh(j_{3} - 1, j_{3})$ at level $\ell$ for $1 \leq j_{1}, j_{2}, j_{3} \leq 2^{L - \ell}$. With this modification, the rest of the algorithm remains unchanged. Figure \ref{fig:mf3} shows the active DOFs at each level for a representative example.
\begin{figure}
 \centering
 \begin{subfigure}{0.2\textwidth}
  \includegraphics[width=\textwidth]{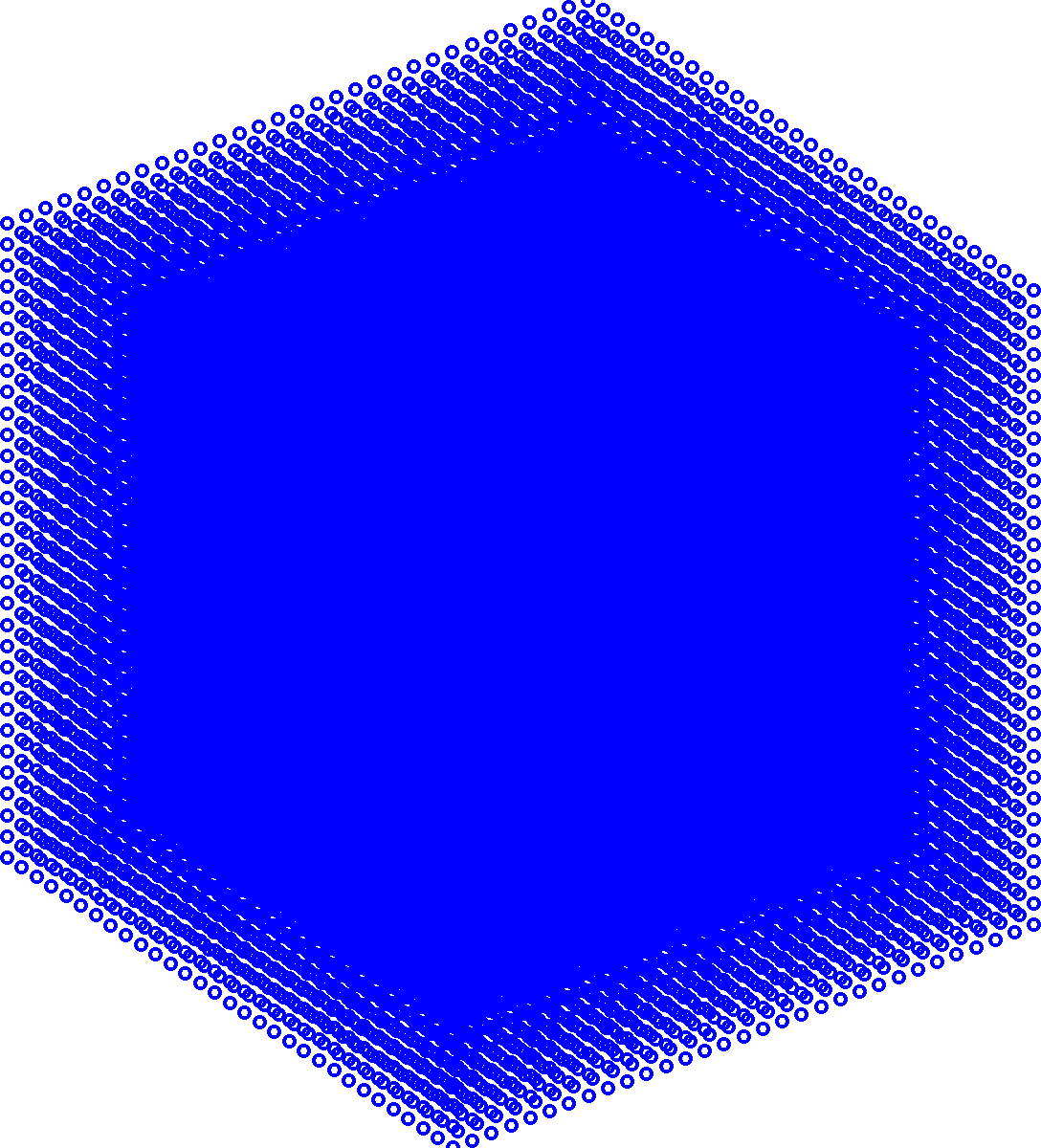}
  \caption*{$\ell = 0$}
 \end{subfigure}
 \quad
 \begin{subfigure}{0.2\textwidth}
  \includegraphics[width=\textwidth]{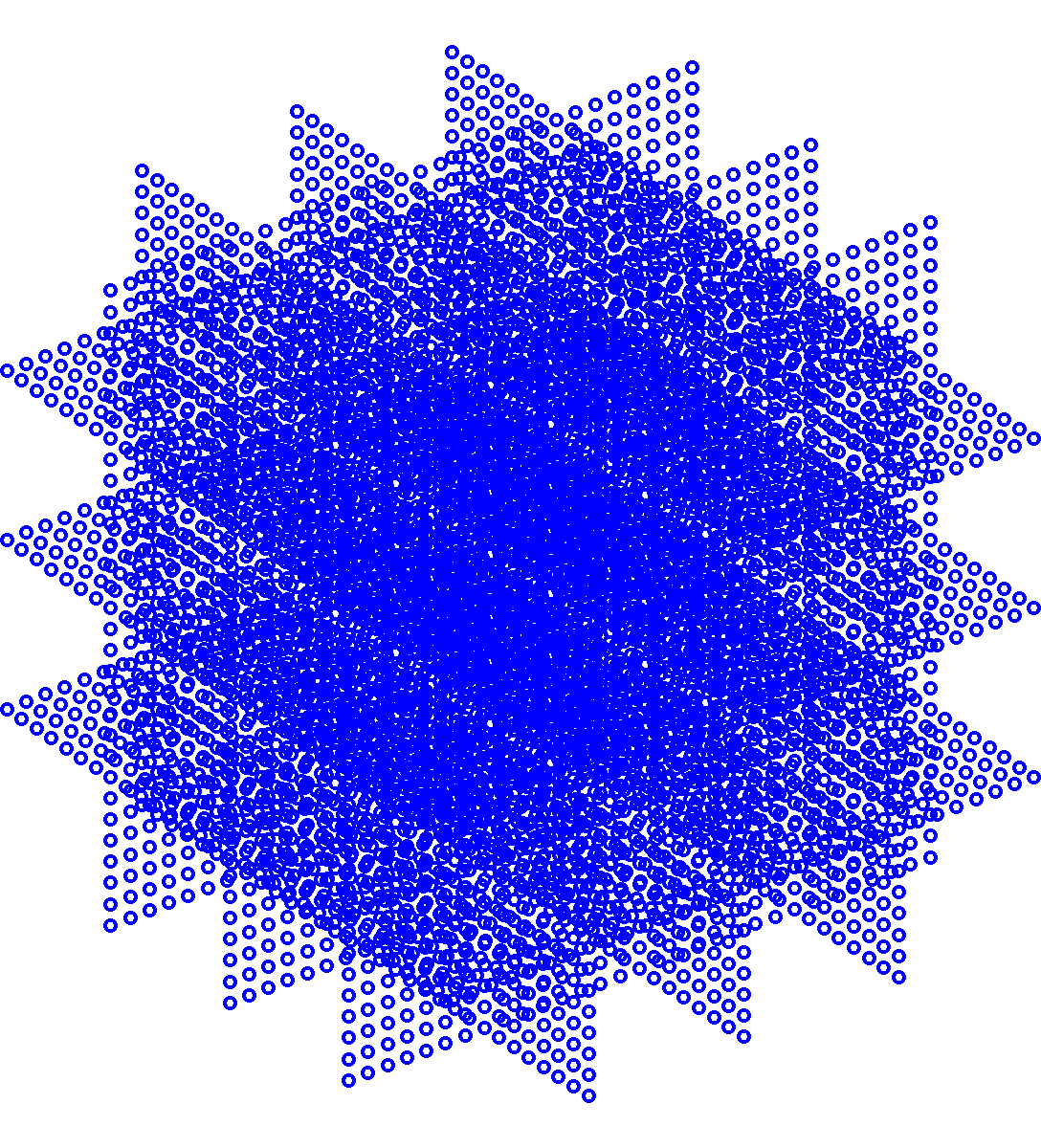}
  \caption*{$\ell = 1$}
 \end{subfigure}
 \quad
 \begin{subfigure}{0.2\textwidth}
  \includegraphics[width=\textwidth]{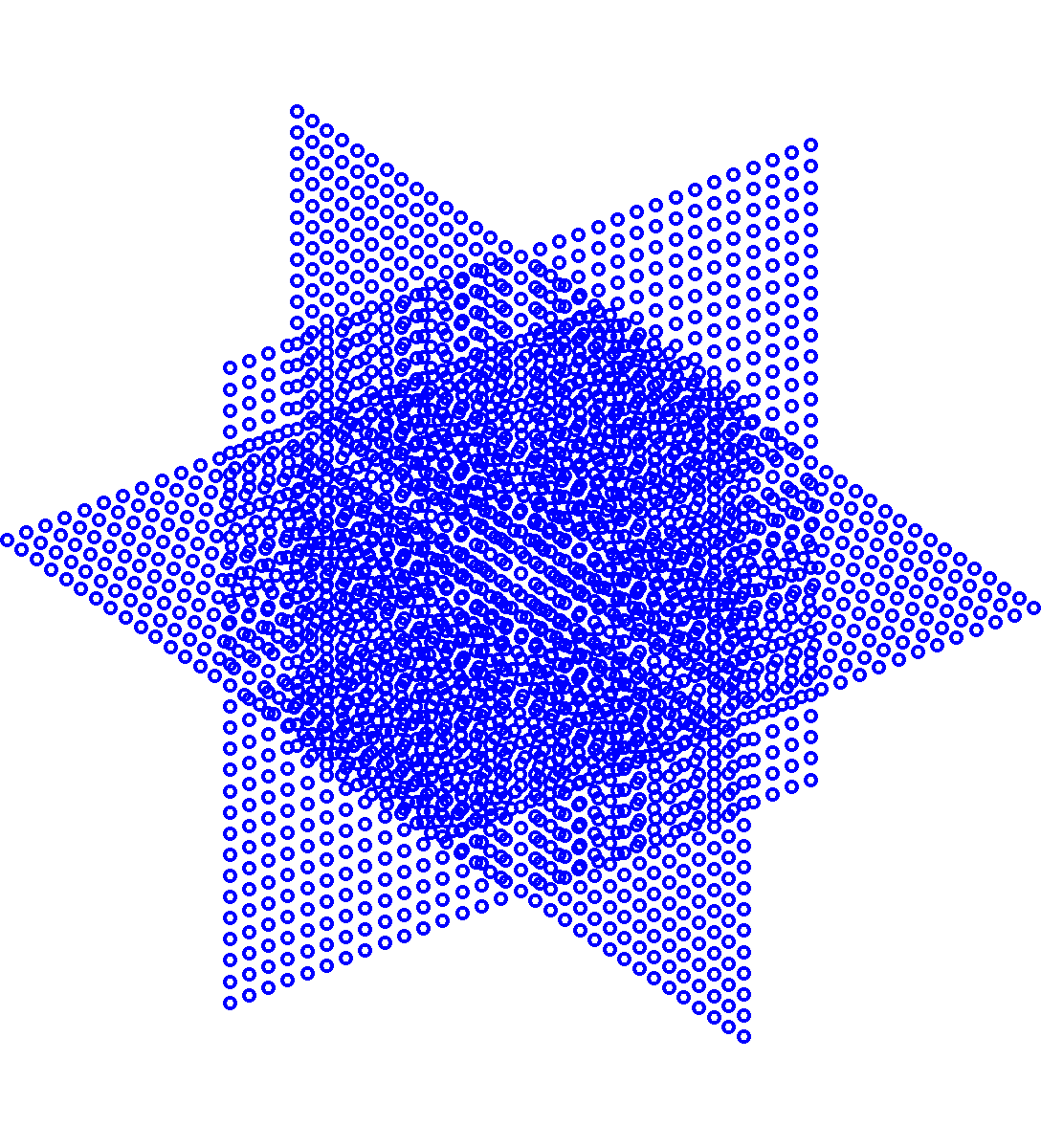}
  \caption*{$\ell = 2$}
 \end{subfigure}
 \caption{Active DOFs at each level $\ell$ of MF in 3D.}
 \label{fig:mf3}
\end{figure}
The output is again a factorization of the form \eqref{eqn:mf}. General geometries can be treated using an adaptive octree.

\subsection{Complexity Estimates}
\label{sec:mf:complexity}
We next analyze the computational complexity of MF. This is determined by the size $|c|$ of a typical index set $c \in C_{\ell}$, which we write as $k_{\ell} = O(2^{(d - 1) \ell})$ following the separator structure. Note furthermore that $|c^{\nbr}| = O(k_{\ell})$ as well since $c^{\nbr}$ is restricted to the separators enclosing the DOFs $c$.

\begin{theorem}[\cite{george:1973:siam-j-numer-anal}]
 \label{thm:mf}
 The cost of constructing the factorization $F$ in \eqref{eqn:mf} using MF is
 \begin{align}
  t_{f} = \sum_{\ell = 0}^{L} 2^{d(L - \ell)} O(k_{\ell}^{3}) =
  \begin{cases}
   O(N), & d = 1\\
   O(N^{3(1 - 1/d)}), & d \geq 2,
  \end{cases}
  \label{eqn:mf-t_f}
 \end{align}
 while that of applying $F$ or $F^{-1}$ is
 \begin{align}
  t_{a/s} = \sum_{\ell = 0}^{L} 2^{d(L - \ell)} O(k_{\ell}^{2}) =
  \begin{cases}
   O(N), & d = 1\\
   O(N \log N), & d = 2\\
   O(N^{2(1 - 1/d)}), & d \geq 3.
  \end{cases}
  \label{eqn:mf-t_as}
 \end{align}
\end{theorem}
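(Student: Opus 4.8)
The plan is to count the arithmetic level by level, using Lemma~\ref{lem:sparse-elim} as the elementary operation and then summing a geometric series over $\ell = 0, 1, \dots, L$. The combinatorial input is that at level $\ell$ the domain is cut into $2^{L-\ell}$ pieces along each of the $d$ directions, so $|C_\ell| = O(2^{d(L-\ell)})$, and that each cell $c \in C_\ell$ carries $k_\ell := |c| = O(2^{(d-1)\ell})$ active DOFs with $|c^\nbr| = O(k_\ell)$, as recorded in the text preceding the theorem. The one point I would check with care is that these size bounds persist as fill-in accumulates: by induction on $\ell$, the Schur complement $B_{qq}$ produced when eliminating a cell is supported on exactly those separator DOFs that are promoted into cells at level $\ell + 1$, and a DOF on a level-$\ell$ separator couples only to DOFs on the separators bounding the same or an adjacent cell; hence the blocks $A_{pp}$, $A_{qp}$, $A_{qq}$ appearing in \eqref{eqn:sparse-matrix} have dimension $O(k_\ell)$ at every level. (For general domains this is the standard adaptive quad-/octree argument together with the ``thin separator'' postprocessing mentioned after Algorithm~\ref{alg:mf}.)

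Granting this, Lemma~\ref{lem:sparse-elim} pins down the per-cell cost: forming $A_{pp} = L_p D_p L_p^\trans$ and the Schur complement $B_{qq} = A_{qq} - A_{qp} A_{pp}^{-1} A_{qp}^\trans$ takes $O(k_\ell^3)$ operations and yields a representation of $S_c$ of size $O(k_\ell^2)$, while applying $S_c$, $S_c^\trans$, or their inverses to a vector costs $O(k_\ell^2)$, each being a product of triangular factors of order $O(k_\ell)$. Multiplying by $|C_\ell|$ and summing, with $2^{dL} = O(N)$, gives
\begin{align*}
 t_f = \sum_{\ell = 0}^{L} |C_\ell|\, O(k_\ell^3) = O\!\left( N \sum_{\ell = 0}^{L} 2^{(2d - 3)\ell} \right), \qquad t_{a/s} = \sum_{\ell = 0}^{L} |C_\ell|\, O(k_\ell^2) = O\!\left( N \sum_{\ell = 0}^{L} 2^{(d - 2)\ell} \right).
\end{align*}
It then remains only to split on the sign of the exponent: when it is negative (only $d = 1$, in both cases) the sum is $O(1)$ and the cost is $O(N)$; when it vanishes (the $d = 2$ case of $t_{a/s}$) the sum is $O(L) = O(\log N)$ and the cost is $O(N \log N)$; and when it is positive the sum is dominated by its last term, $O(2^{(2d-3)L}) = O(N^{(2d-3)/d})$ for $t_f$ and $O(2^{(d-2)L}) = O(N^{(d-2)/d})$ for $t_{a/s}$, yielding $t_f = O(N^{3(1 - 1/d)})$ and $t_{a/s} = O(N^{2(1 - 1/d)})$. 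This reproduces \eqref{eqn:mf-t_f} and \eqref{eqn:mf-t_as}. (The bound for $t_{a/s}$ also measures the memory needed to store $F$, since applying $F^{\pm 1}$ via \eqref{eqn:mf} visits each stored entry $O(1)$ times.)

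The main obstacle is the inductive structural claim of the first paragraph — that the front and neighbor sizes really do stay $O(k_\ell)$ and that each elimination confines its fill-in to the current cell boundary; this is the substance of nested dissection and is where all the geometry enters. Once it is in place, the complexity estimate is the one-line geometric-series computation above, and the per-cell flop count is entirely routine.
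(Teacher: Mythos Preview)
Your proof is correct and follows essentially the same approach as the paper's: count $2^{d(L-\ell)}$ cells at level $\ell$, invoke Lemma~\ref{lem:sparse-elim} to get the per-cell costs $O(k_\ell^3)$ and $O(k_\ell^2)$, and sum. You have simply filled in more detail than the paper does---in particular the explicit geometric-series case split and the inductive remark on why the front sizes stay $O(k_\ell)$---but the argument is the same.
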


\begin{proof}
 Consider first the factorization cost $t_{f}$. There are $2^{d(L - \ell)}$ cells at level $\ell$, where each cell $c \in C_{\ell}$ requires various local dense matrix operations (due to fill-in) at a total cost of $O((|c| + |c^{\nbr}|)^{3}) = O(k_{\ell}^{3})$, following Lemma \ref{lem:sparse-elim}. Hence, we derive \eqref{eqn:mf-t_f}. A similar argument yields \eqref{eqn:mf-t_as} by observing that each $c \in C_{\ell}$ requires local dense matrix-vector products with cost $O((|c|+ |c^{\nbr}|)^{2})$.
\end{proof}

\begin{remark}
 If a tree is used, then there is also a cost of $O(N \log N)$ for tree construction, but the associated constant is tiny and so we can ignore it for all practical purposes.
\end{remark}

The memory cost to store $F$ or $F^{-1}$ is clearly $m_{f} = O(t_{a/s})$ and so is also given by \eqref{eqn:mf-t_as}. Theorem \ref{thm:mf} is, in fact, valid for all $d$, including the 1D case where $k_{\ell} = O(1)$ and optimal linear complexity is achieved. It is immediate that the suboptimal complexities in 2D and 3D are due to the geometric growth of $k_{\ell}$.

\section{Hierarchical Interpolative Factorization}
\label{sec:hifde}
In this section, we present HIF-DE, which builds upon MF by introducing additional levels of compression based on skeletonizing the separator fronts. The key observation is that the Schur complements characterizing the dense frontal matrices accumulated throughout the algorithm possess significant rank structures. This can be understood by interpreting the matrix $A_{pp}^{-1}$ in \eqref{eqn:sparse-elim} as the discrete Green's function of a local elliptic PDE. By elliptic regularity, such Green's functions typically have numerically low-rank off-diagional blocks. The same rank structure essentially carries over to the Schur complement $B_{qq}$ itself, as indeed has previously been recognized and successfully exploited \cite{amestoy:siam-j-sci-comput,aminfar:arxiv,gillman:2014:siam-j-sci-comput,gillman:2014:adv-comput-math,grasedyck:2009:numer-math,martinsson:2009:j-sci-comput,schmitz:2012:j-comput-phys,schmitz:2014:j-comput-phys,xia:2013:siam-j-sci-comput,xia:2013:siam-j-matrix-anal-appl,xia:2009:siam-j-matrix-anal-appl}.

The interaction ranks of the Schur complement interactions (SCIs) constituting $B_{qq}$ have been the subject of several analytic studies \cite{bebendorf:2005:math-comp,bebendorf:2003:numer-math,borm:2010:numer-math,chandrasekaran:2010:siam-j-matrix-anal-appl}, though none have considered the exact type with which we are concerned in this paper. Such an analysis, however, is not our primary goal, and we will be content simply with an empirical description. In particular, we have found through extensive numerical experimentation (Section \ref{sec:results}) that standard multipole estimates \cite{greengard:1987:j-comput-phys,greengard:1997:acta-numer} appear to hold for SCIs. We hereafter take this as an assumption, from which we can expect that the skeletons of a given cluster of DOFs will tend to lie along its boundary \cite{ho:2012:siam-j-sci-comput,ho:comm-pure-appl-math}, thus exhibiting a dimensional reduction.

We are now in a position to motivate HIF-DE. Considering the 2D case for concreteness, the main idea is simply to employ an additional level $\ell + 1/2$ of edge skeletonization after each level $\ell$ of interior cell elimination. This fully exploits the 1D geometry of the active DOFs and effectively reduces each front to 0D. An analogous strategy is adopted in 3D for reduction to either 1D by skeletonizing cubic faces or to 0D by skeletonizing faces then edges. In principle, the latter is more efficient but can generate fill-in and so must be used with care.

The overall approach of HIF-DE is closely related to that of \cite{aminfar:arxiv,gillman:2014:siam-j-sci-comput,gillman:2014:adv-comput-math,grasedyck:2009:numer-math,schmitz:2012:j-comput-phys,schmitz:2014:j-comput-phys,xia:2009:siam-j-matrix-anal-appl}, but our sparsification framework permits a much simpler implementation and analysis. As with MF, we begin first in 2D before extending to 3D.

\subsection{Two Dimensions}
\label{sec:hifde:2d}
Assume the same setup as Section \ref{sec:mf:2d}. HIF-DE supplements interior cell elimination (2D to 1D) at level $\ell$ with edge skeletonization (1D to 0D) at level $\ell + 1/2$ for each $\ell = 0, 1, \dots, L - 1$. Figure \ref{fig:hifde2} shows the active DOFs at each level for a representative example.
\begin{figure}
 \centering
 \begin{subfigure}{0.2\textwidth}
  \includegraphics[width=\textwidth]{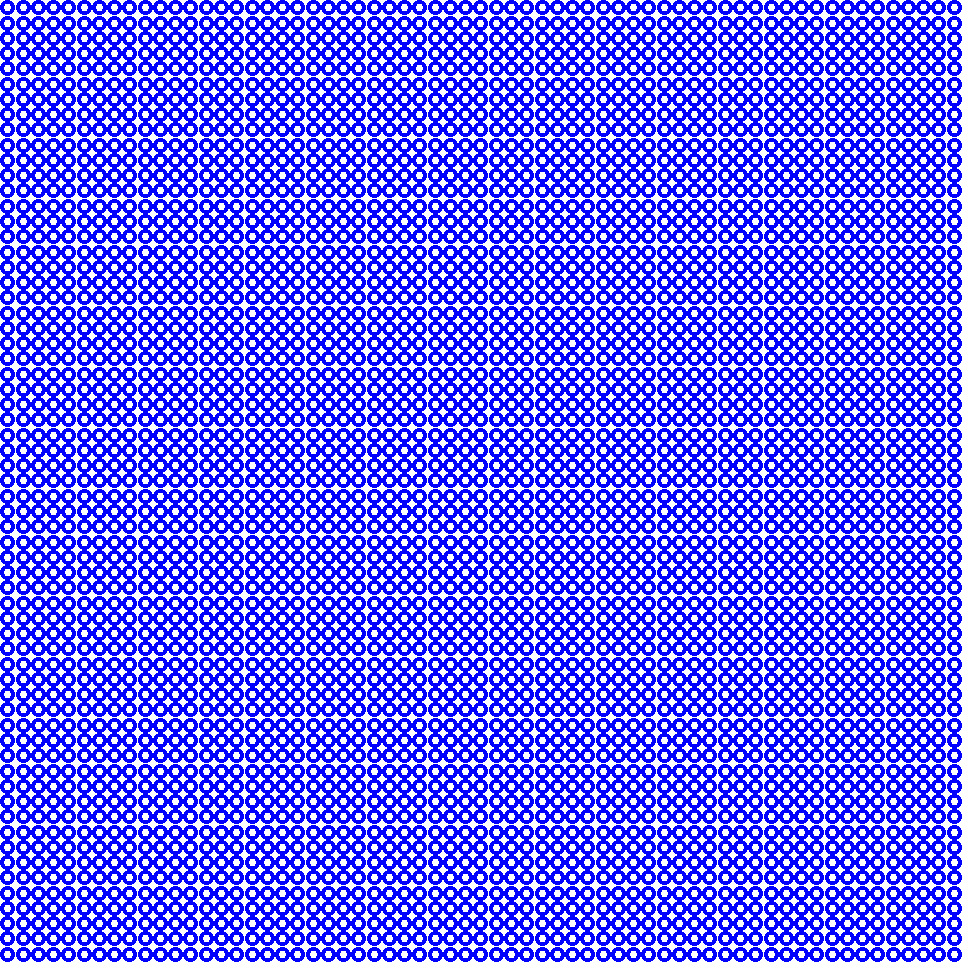}
  \caption*{$\ell = 0$}
 \end{subfigure}
 \quad
 \begin{subfigure}{0.2\textwidth}
  \includegraphics[width=\textwidth]{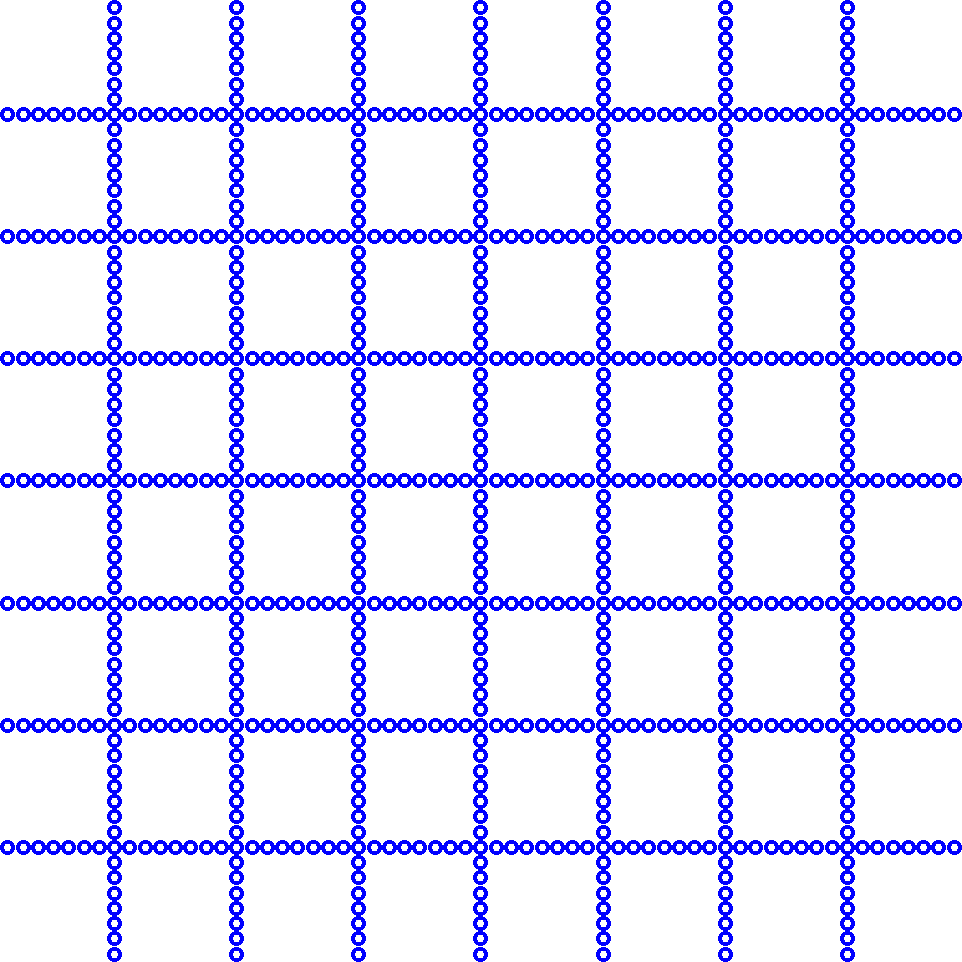}
  \caption*{$\ell = 1/2$}
 \end{subfigure}
 \quad
 \begin{subfigure}{0.2\textwidth}
  \includegraphics[width=\textwidth]{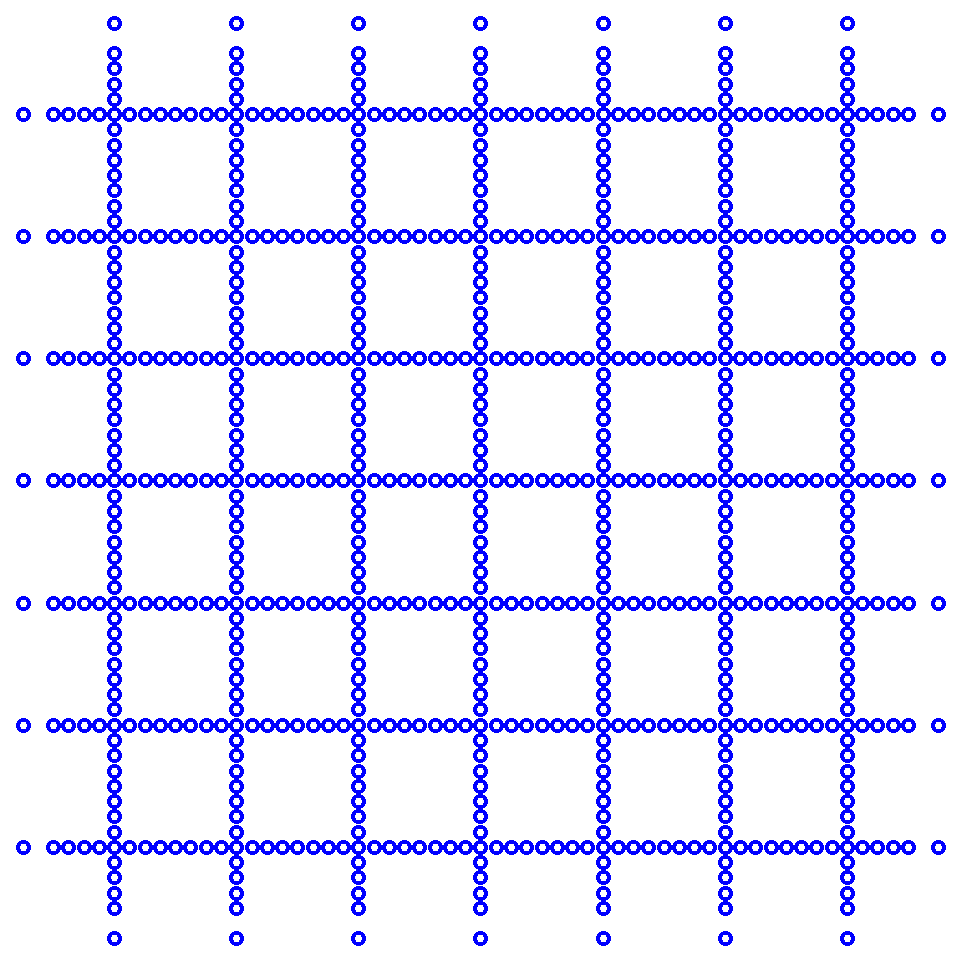}
  \caption*{$\ell = 1$}
 \end{subfigure}
 \quad
 \begin{subfigure}{0.2\textwidth}
  \includegraphics[width=\textwidth]{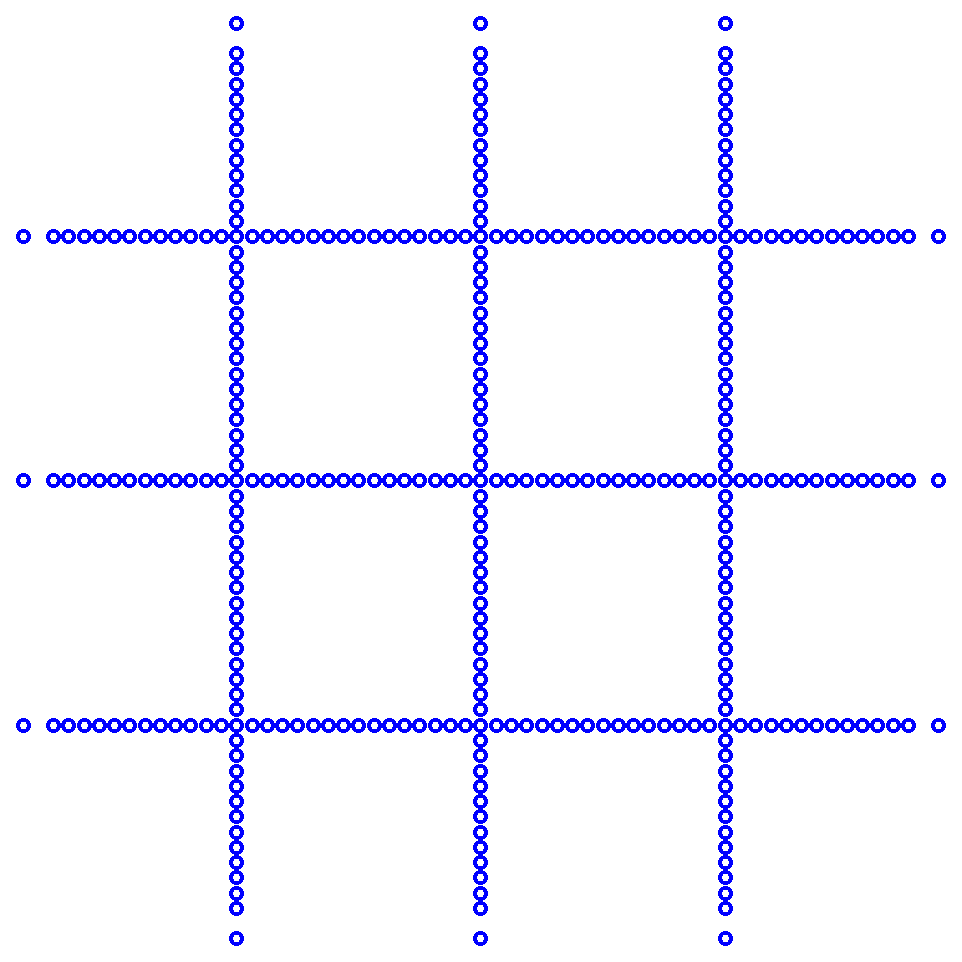}
  \caption*{$\ell = 3/2$}
 \end{subfigure}
 \\~\\~\\
 \begin{subfigure}{0.2\textwidth}
  \includegraphics[width=\textwidth]{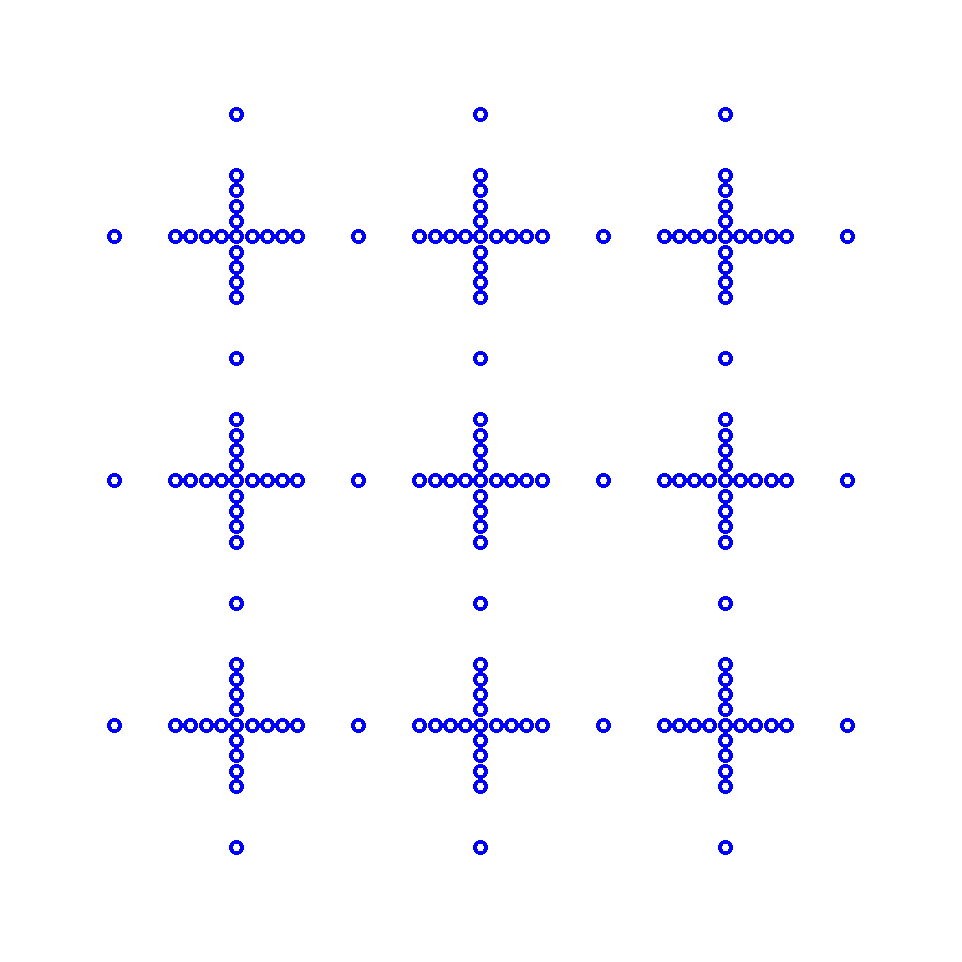}
  \caption*{$\ell = 2$}
 \end{subfigure}
 \quad
 \begin{subfigure}{0.2\textwidth}
  \includegraphics[width=\textwidth]{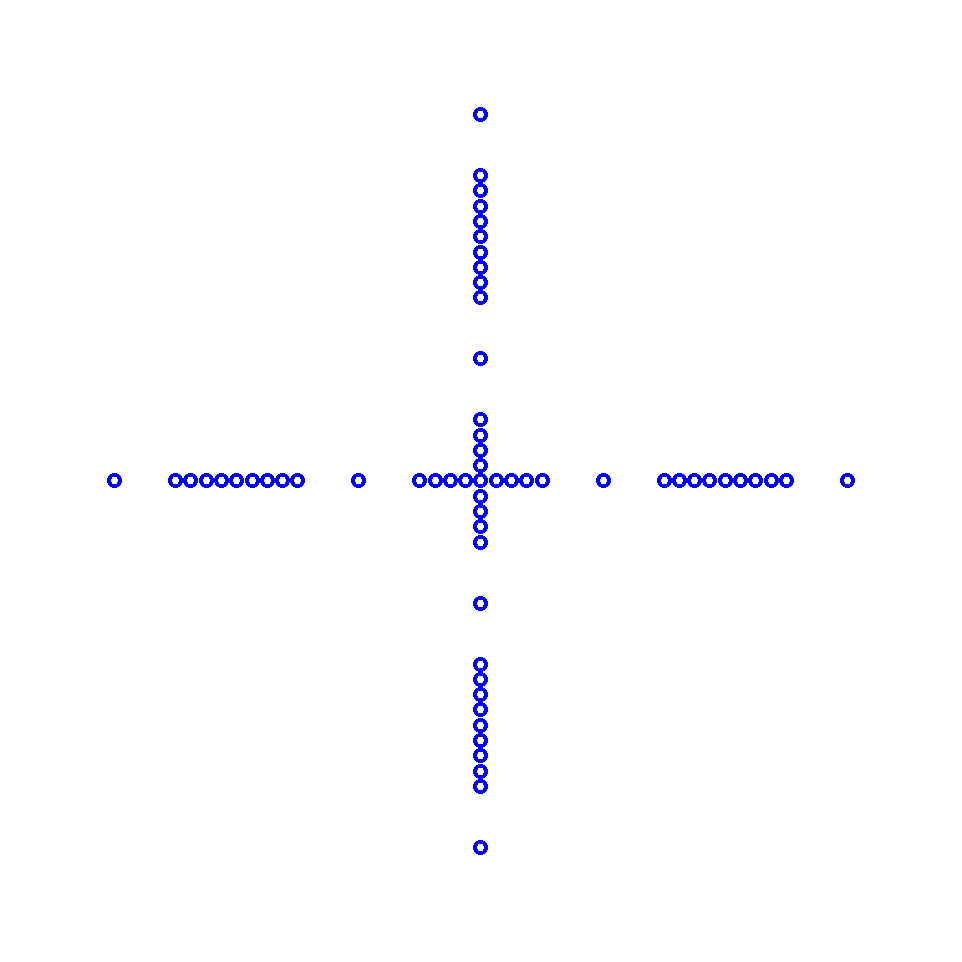}
  \caption*{$\ell = 5/2$}
 \end{subfigure}
 \quad
 \begin{subfigure}{0.2\textwidth}
  \includegraphics[width=\textwidth]{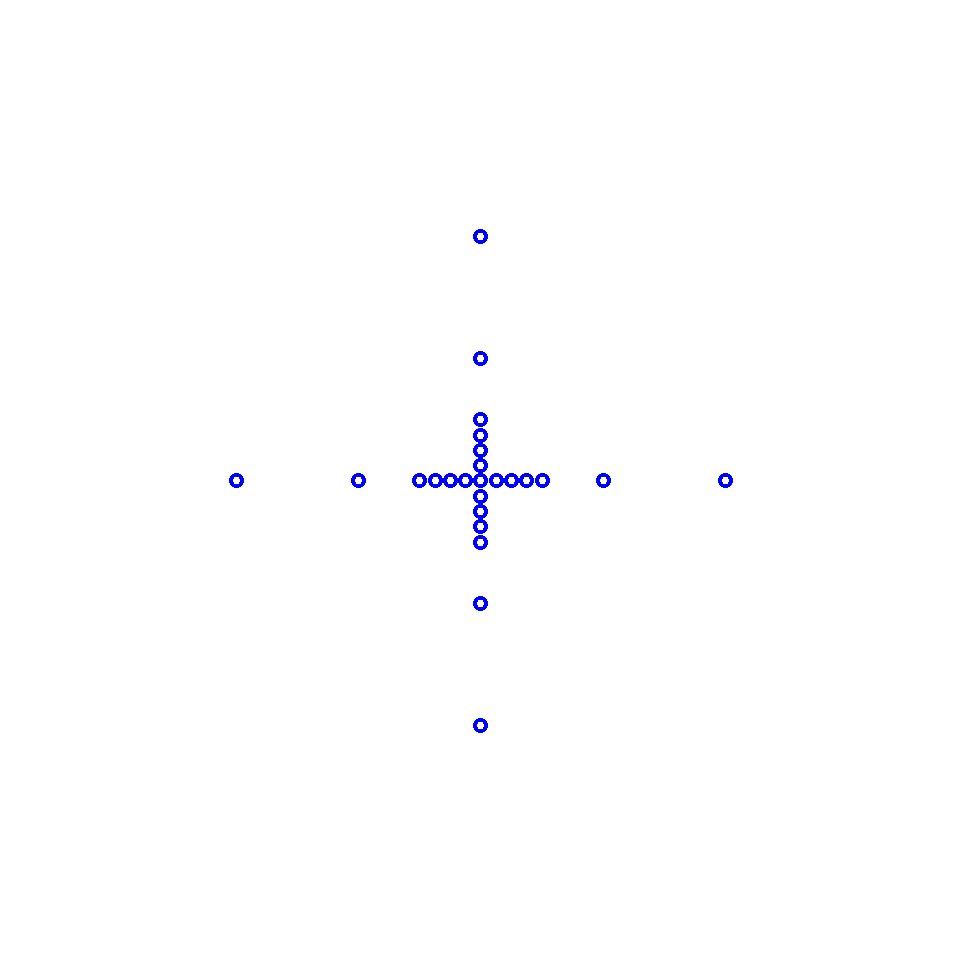}
  \caption*{$\ell = 3$}
 \end{subfigure}
 \caption{Active DOFs at each level $\ell$ of HIF-DE in 2D.}
 \label{fig:hifde2}
\end{figure}

\subsubsection*{Level $\ell$}
Partition $\Omega$ by 1D separators $2^{\ell} mh(j_{1}, \cdot)$ and $2^{\ell} mh(\cdot, j_{2})$ for $1 \leq j_{1}, j_{2} \leq 2^{L - \ell} - 1$ into interior square cells $2^{\ell} mh(j_{1} - 1, j_{1}) \times 2^{\ell} mh(j_{2} - 1, j_{2})$ for $1 \leq j_{1}, j_{2} \leq 2^{L - \ell}$. Let $C_{\ell}$ be the collection of index sets corresponding to the active DOFs of each cell. Elimination with respect to $C_{\ell}$ then gives
\begin{align*}
 A_{\ell + 1/2} = W_{\ell}^{\trans} A_{\ell} W_{\ell}, \quad W_{\ell} = \prod_{c \in C_{\ell}} S_{c},
\end{align*}
where the DOFs $\bigcup_{c \in C_{\ell}} c$ have been eliminated. The matrix $A_{\ell + 1/2}$ is block diagonal with block partitioning
\begin{align*}
 \theta_{\ell + 1/2} = \left( \bigcup_{c \in C_{0}} \{ c \} \right) \cup \left( \bigcup_{c \in C_{1/2}} \{ \rd{c} \} \right) \cdots \cup \left( \bigcup_{c \in C_{\ell}} \{ c \} \right) \cup \{ s_{\ell + 1/2} \},
\end{align*}
where $s_{\ell + 1/2} = s_{\ell} \setminus \bigcup_{c \in C_{\ell}} c$.

\subsubsection*{Level $\ell + 1/2$}
Partition $\Omega$ into Voronoi cells \cite{aurenhammer:1991:acm-comput-surv} about the edge centers $2^{\ell} mh(j_{1}, j_{2} - 1/2)$ for $1 \leq j_{1} \leq 2^{L - \ell} - 1$, $1 \leq j_{2} \leq 2^{L - \ell}$ and $2^{\ell} mh(j_{1} - 1/2, j_{2})$ for $1 \leq j_{1} \leq 2^{L - \ell}$, $1 \leq j_{2} \leq 2^{L - \ell} - 1$. Let $C_{\ell + 1/2}$ be the collection of index sets corresponding to the active DOFs of each cell. Skeletonization with respect to $C_{\ell + 1/2}$ then gives
\begin{align*}
 A_{\ell + 1} = \skel_{C_{\ell + 1/2}} (A_{\ell + 1/2}) \approx U_{\ell + 1/2}^{\trans} A_{\ell + 1/2} U_{\ell + 1/2}, \quad U_{\ell + 1/2} = \prod_{c \in C_{\ell + 1/2}} Q_{c} S_{\rd{c}},
\end{align*}
where the DOFs $\bigcup_{c \in C_{\ell + 1/2}} \rd{c}$ have been eliminated. Note that no fill-in is generated since the DOFs $\sk{c}$ for each $c \in C_{\ell + 1/2}$ are already connected via SCIs from elimination at level $\ell$. The matrix $A_{\ell + 1}$ is block diagonal with block partitioning
\begin{align*}
 \theta_{\ell + 1} = \left( \bigcup_{c \in C_{0}} \{ c \} \right) \cup \left( \bigcup_{c \in C_{1/2}} \{ \rd{c} \} \right) \cdots \cup \left( \bigcup_{c \in C_{\ell}} \{ c \} \right) \cup \left( \bigcup_{c \in C_{\ell + 1/2}} \{ \rd{c} \} \right) \cup \{ s_{\ell + 1/2} \},
\end{align*}
where $s_{\ell + 1} = s_{\ell + 1/2} \setminus \bigcup_{c \in C_{\ell}} \rd{c}$.

\subsubsection*{Level $L$}
Combining over all levels gives
\begin{align*}
 D \equiv A_{L} \approx U_{L - 1/2}^{\trans} W_{L - 1}^{\trans} \cdots U_{1/2}^{\trans} W_{0}^{\trans} A W_{0} U_{1/2} \cdots W_{L - 1} U_{L - 1/2}
\end{align*}
or, more simply,
\begin{align*}
 D \approx V_{L - 1/2}^{\trans} \cdots V_{1/2}^{\trans} V_{0}^{\trans} A V_{0} V_{1/2} \cdots V_{L - 1/2},
\end{align*}
where
\begin{align}
 V_{\ell} =
 \begin{cases}
  W_{\ell}, & \ell = 0, 1, \dots, L - 1\\
  U_{\ell}, & \text{otherwise},
 \end{cases}
 \label{eqn:matrix-factor}
\end{align}
so
\begin{subequations}
 \label{eqn:hifde2}
 \begin{align}
  A &\approx V_{0}^{-\trans} V_{1/2}^{-\trans} \cdots V_{L - 1/2}^{-\trans} D V_{L - 1/2}^{-1} \cdots V_{1/2}^{-1} V_{0}^{-1} \equiv F,\\
  A^{-1} &\approx V_{0} V_{1/2} \cdots V_{L - 1/2} D^{-1} V_{L - 1/2}^{\trans} \cdots V_{1/2}^{\trans} V_{0}^{\trans} = F^{-1}.
 \end{align}
\end{subequations}
This is a factorization very similar to that in \eqref{eqn:mf} except (1) it has twice as many factors, (2) it is now an approximation, and (3) the skeletonization matrices $U_{\ell}$ are composed of both upper and lower triangular factors and so are not themselves triangular (but are still easily invertible). We call \eqref{eqn:hifde2} an approximate generalized LDL decomposition, with $F^{-1}$ serving as a direct solver at high accuracy or as a preconditioner otherwise.

Unlike MF, if $A$ is SPD, then $D$ and hence $F$ now only approximate SPD matrices. The extent of this approximation is governed by Weyl's inequality.

\begin{theorem}
 \label{thm:weyl}
 If $A, B \in \mathbb{R}^{N \times N}$ are symmetric, then
 \begin{align*}
  |\lambda_{i} (A) - \lambda_{i} (B)| \leq \| A - B \|, \quad i = 1, \dots, N,
 \end{align*}
 where $\lambda_{i} (\cdot)$ returns the $i$th largest eigenvalue of a symmetric matrix.
\end{theorem}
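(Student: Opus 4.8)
The plan is to reduce the claim to the Courant--Fischer min--max characterization of the eigenvalues of a real symmetric matrix. Recall that for symmetric $M \in \mathbb{R}^{N \times N}$ and each $i$,
\begin{align*}
 \lambda_{i}(M) = \max_{\dim V = i} \, \min_{\substack{x \in V\\ \|x\| = 1}} x^{\trans} M x,
\end{align*}
the maximum being taken over all $i$-dimensional subspaces $V \subseteq \mathbb{R}^{N}$. The proof will combine this with the elementary observation that for the symmetric matrix $E = A - B$ and any unit vector $x$ one has $|x^{\trans} E x| \leq \|E\|$, which is simply the variational definition of the $2$-norm of a symmetric matrix.

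First I would fix $i$ and an arbitrary $i$-dimensional subspace $V$. Writing $x^{\trans} A x = x^{\trans} B x + x^{\trans} E x$ and using $x^{\trans} E x \leq \|E\|$ on the unit sphere of $V$, I obtain
\begin{align*}
 \min_{\substack{x \in V\\ \|x\| = 1}} x^{\trans} A x \leq \min_{\substack{x \in V\\ \|x\| = 1}} x^{\trans} B x + \|E\| \leq \lambda_{i}(B) + \|E\|,
\end{align*}
where the last step uses that $\lambda_{i}(B)$ is the maximum of the inner minimum over all such $V$. Taking the maximum over $V$ on the left then yields $\lambda_{i}(A) \leq \lambda_{i}(B) + \|E\|$. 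Next, interchanging the roles of $A$ and $B$ (and noting $\|B - A\| = \|A - B\|$) gives $\lambda_{i}(B) \leq \lambda_{i}(A) + \|A - B\|$, and the two inequalities together are exactly $|\lambda_{i}(A) - \lambda_{i}(B)| \leq \|A - B\|$ for each $i$.

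Since the argument only manipulates the min--max formula, there is no real obstacle; the one point requiring care is the legitimacy of passing the pointwise inequalities through the inner minimum and the outer maximum, which holds because $\min$ and $\max$ are monotone and commute with the addition of the constant $\|E\|$. If a fully self-contained treatment were desired, the main effort would instead shift to proving the Courant--Fischer theorem itself -- via the spectral decomposition of $B$ together with a dimension-counting intersection argument -- but as this is entirely standard we would simply cite it.
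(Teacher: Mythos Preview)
Your proof via the Courant--Fischer min--max characterization is correct and is the standard argument for Weyl's inequality. Note, however, that the paper does not actually supply a proof of this theorem: it is stated without proof as a classical result and then immediately applied to derive the subsequent corollary on positive definiteness of $F$. So there is nothing to compare against; your argument simply fills in a proof the authors omitted as well known.
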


\begin{corollary}
 If $A$ is SPD with $F = A + E$ symmetric such that $\| E \| \leq \epsilon \| A \|$ for $\epsilon \kappa (A) < 1$, where $\kappa (A) = \| A \| \| A^{-1} \|$ is the condition number of $A$, then $F$ is SPD.
\end{corollary}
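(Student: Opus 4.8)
The plan is to show that $F$ is SPD by establishing that all of its eigenvalues are positive, using Weyl's inequality (Theorem~\ref{thm:weyl}) to compare the spectrum of $F$ with that of $A$. Since $F = A + E$ is symmetric by hypothesis and $A$ is SPD, we have $\lambda_i(A) \geq \lambda_{\min}(A) = 1/\|A^{-1}\| > 0$ for all $i$. Applying Theorem~\ref{thm:weyl} with $B = F$ gives $|\lambda_i(A) - \lambda_i(F)| \leq \|A - F\| = \|E\|$, so $\lambda_i(F) \geq \lambda_i(A) - \|E\| \geq \lambda_{\min}(A) - \|E\|$.

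It then remains to verify that $\lambda_{\min}(A) - \|E\| > 0$, i.e., that $\|E\| < \lambda_{\min}(A)$. This is exactly where the hypothesis $\epsilon \kappa(A) < 1$ enters: from $\|E\| \leq \epsilon\|A\|$ and $\lambda_{\min}(A) = 1/\|A^{-1}\|$, we get
\begin{align*}
 \frac{\|E\|}{\lambda_{\min}(A)} \leq \epsilon \|A\| \|A^{-1}\| = \epsilon\kappa(A) < 1,
\end{align*}
so $\|E\| < \lambda_{\min}(A)$ and hence $\lambda_i(F) > 0$ for every $i$. Since $F$ is symmetric with strictly positive eigenvalues, $F$ is SPD, which completes the argument.

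There is really no serious obstacle here — the corollary is a direct consequence of Weyl's inequality combined with the elementary facts that a symmetric matrix is SPD exactly when its eigenvalues are positive and that $\lambda_{\min}(A) = 1/\|A^{-1}\|$ for SPD $A$ (the latter because the eigenvalues of $A^{-1}$ are the reciprocals of those of $A$, and $\|A^{-1}\| = \lambda_{\max}(A^{-1})$). The only point deserving a moment's care is making sure the chain of inequalities is strict where it needs to be: the strictness of $\epsilon\kappa(A) < 1$ propagates to $\lambda_i(F) > 0$, which is what guarantees positive definiteness rather than mere positive semidefiniteness.
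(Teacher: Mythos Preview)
Your proof is correct and follows essentially the same approach as the paper: both apply Weyl's inequality (Theorem~\ref{thm:weyl}) to bound $|\lambda_i(A)-\lambda_i(F)|\le\|E\|\le\epsilon\|A\|$ and then use $\epsilon\kappa(A)<1$ to conclude $\lambda_i(F)>0$. The only cosmetic difference is that the paper phrases the bound in relative form, obtaining $\lambda_i(F)\ge(1-\epsilon\kappa(A))\lambda_i(A)$, which also yields the side remark that all eigenvalues are approximated to relative precision $\epsilon\kappa(A)$.
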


\begin{proof}
 By Theorem \ref{thm:weyl}, $|\lambda_{i} (A) - \lambda_{i} (F)| \leq \| E \| = \epsilon \| A \|$ for all $i = 1, \dots, N$, so
 \begin{align}
  \left| \frac{\lambda_{i} (A) - \lambda_{i} (F)}{\lambda_{i} (A)} \right| \leq \left| \frac{\lambda_{i} (A) - \lambda_{i} (F)}{\lambda_{N} (A)} \right| \leq \epsilon \| A \| \| A^{-1} \| = \epsilon \kappa (A).
  \label{eqn:eigenvalues}
 \end{align}
 This implies that $\lambda_{i} (F) \geq (1 - \epsilon \kappa (A)) \, \lambda_{i} (A)$, so $\lambda_{i} (F) > 0$ if $\epsilon \kappa (A) < 1$ since $\lambda_{i} (A) > 0$ by assumption.
\end{proof}

\begin{remark}
 Equation \eqref{eqn:eigenvalues} actually proves a much more general result, namely that all eigenvalues are approximated to relative precision $\epsilon \kappa (A)$.
\end{remark}

The requirement that $\epsilon \kappa (A) < 1$ is necessary for $F^{-1}$ to achieve any accuracy whatsoever and hence is quite weak. Therefore, $F$ is SPD under very mild conditions, in which case \eqref{eqn:hifde2} can be interpreted as a generalized Cholesky decomposition. Its inverse $F^{-1}$ is then also SPD and can be used, e.g., as a preconditioner in CG.

The entire procedure is summarized as Algorithm \ref{alg:hifde2}.
\begin{algorithm}
 \caption{HIF-DE.}
 \label{alg:hifde2}
 \begin{algorithmic}
  \State $A_{0} = A$ \Comment{initialize}
  \For{$\ell = 0, 1, \dots, L - 1$} \Comment{loop from finest to coarsest level}
   \State $A_{\ell + 1/2} = W_{\ell}^{\trans} A_{\ell} W_{\ell}$ \Comment{eliminate interior cells}
   \State $A_{\ell + 1} = \skel_{C_{\ell + 1/2}} (A_{\ell + 1/2}) \approx U_{\ell + 1/2}^{\trans} A_{\ell + 1/2} U_{\ell + 1/2}$ \Comment{skeletonize edges (faces)}
  \EndFor
  \State $A \approx V_{0}^{-\trans} V_{1/2}^{-\trans} \cdots V_{L - 1/2}^{-\trans} A_{L} V_{L - 1/2}^{-1} \cdots V_{1/2}^{-1} V_{0}^{-1}$ \Comment{generalized LDL decomposition}
 \end{algorithmic}
\end{algorithm}

\subsection{Three Dimensions}
\label{sec:hifde:3d}
Assume the same setup as in Section \ref{sec:mf:3d}. There are two variants of HIF-DE in 3D: a direct generalization of the 2D algorithm by combining interior cell elimination (3D to 2D) with face skeletonization (2D to 1D) and a more complicated version adding also edge skeletonization (1D to 0D) afterward. We will continue to refer to the former simply as HIF-DE and call the latter ``HIF-DE in 3D with edge skeletonization''. For unity of presentation, we will discuss only HIF-DE here, postponing the alternative formulation until Section \ref{sec:hifde:3dx}. Figure \ref{fig:hifde3} shows the active DOFs at each level for HIF-DE on a representative example.
\begin{figure}
 \centering
 \begin{subfigure}{0.2\textwidth}
  \includegraphics[width=\textwidth]{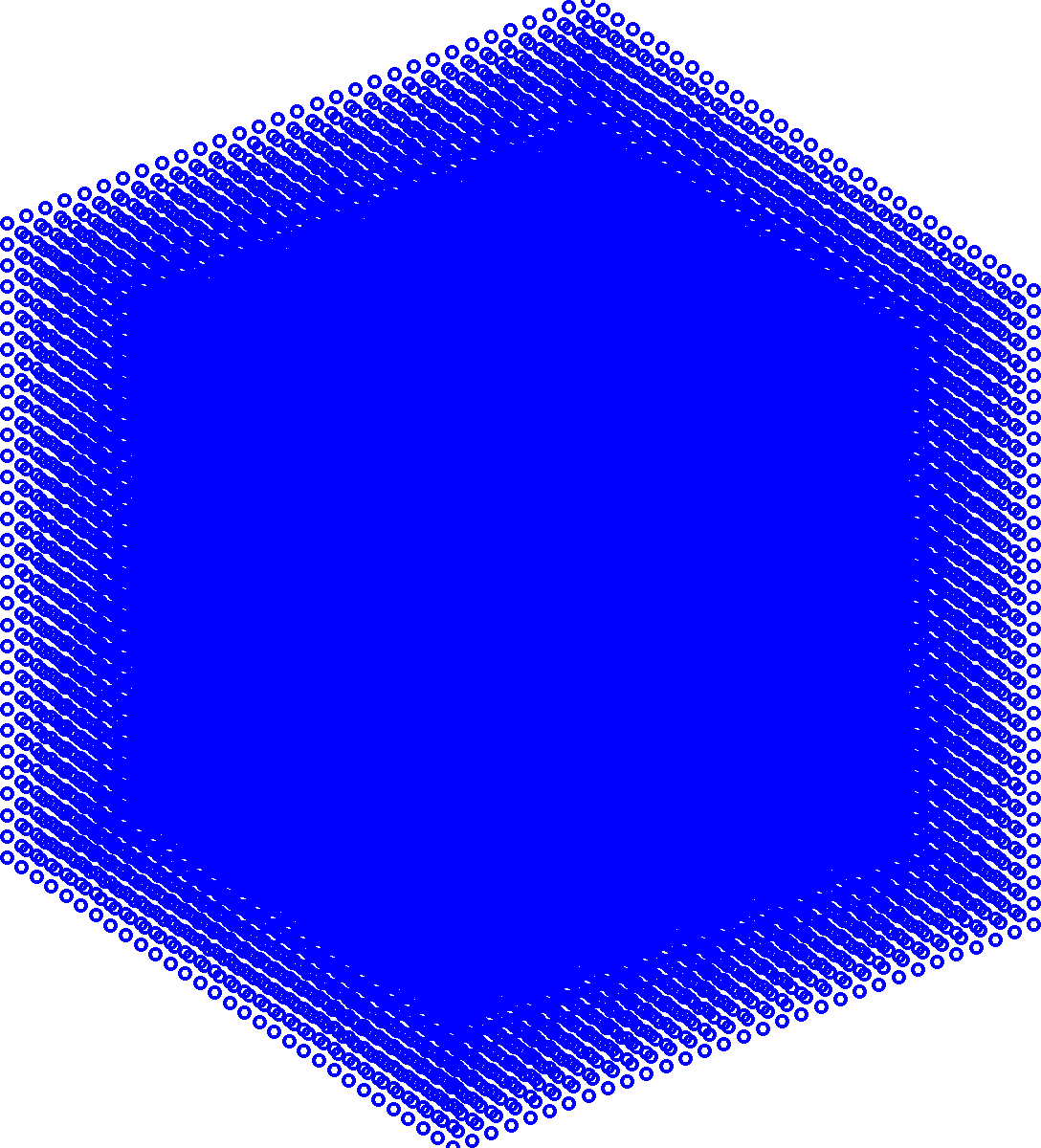}
  \caption*{$\ell = 0$}
 \end{subfigure}
 \quad
 \begin{subfigure}{0.2\textwidth}
  \includegraphics[width=\textwidth]{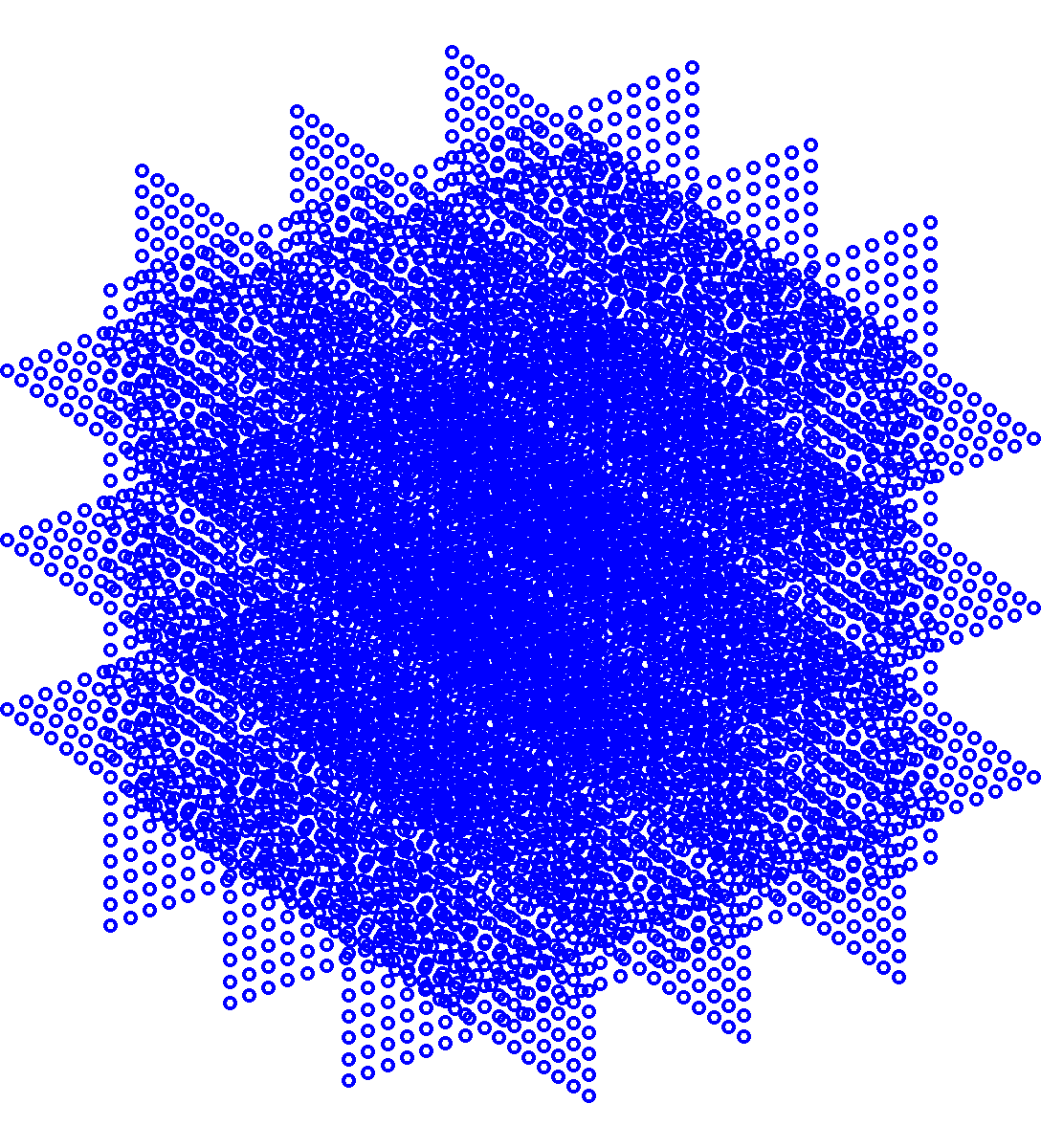}
  \caption*{$\ell = 1/2$}
 \end{subfigure}
 \quad
 \begin{subfigure}{0.2\textwidth}
  \includegraphics[width=\textwidth]{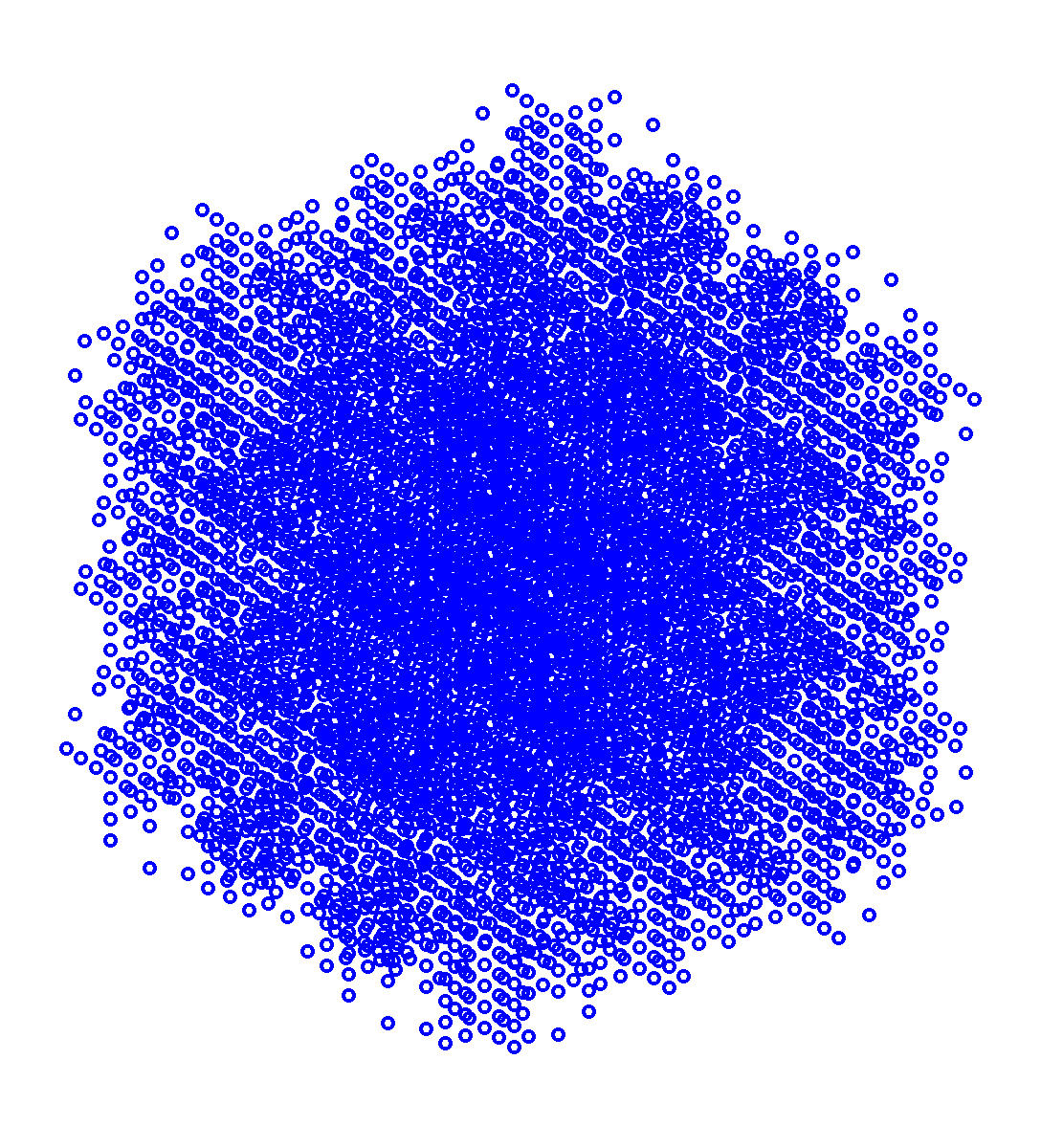}
  \caption*{$\ell = 1$}
 \end{subfigure}
 \\~\\~\\
 \begin{subfigure}{0.2\textwidth}
  \includegraphics[width=\textwidth]{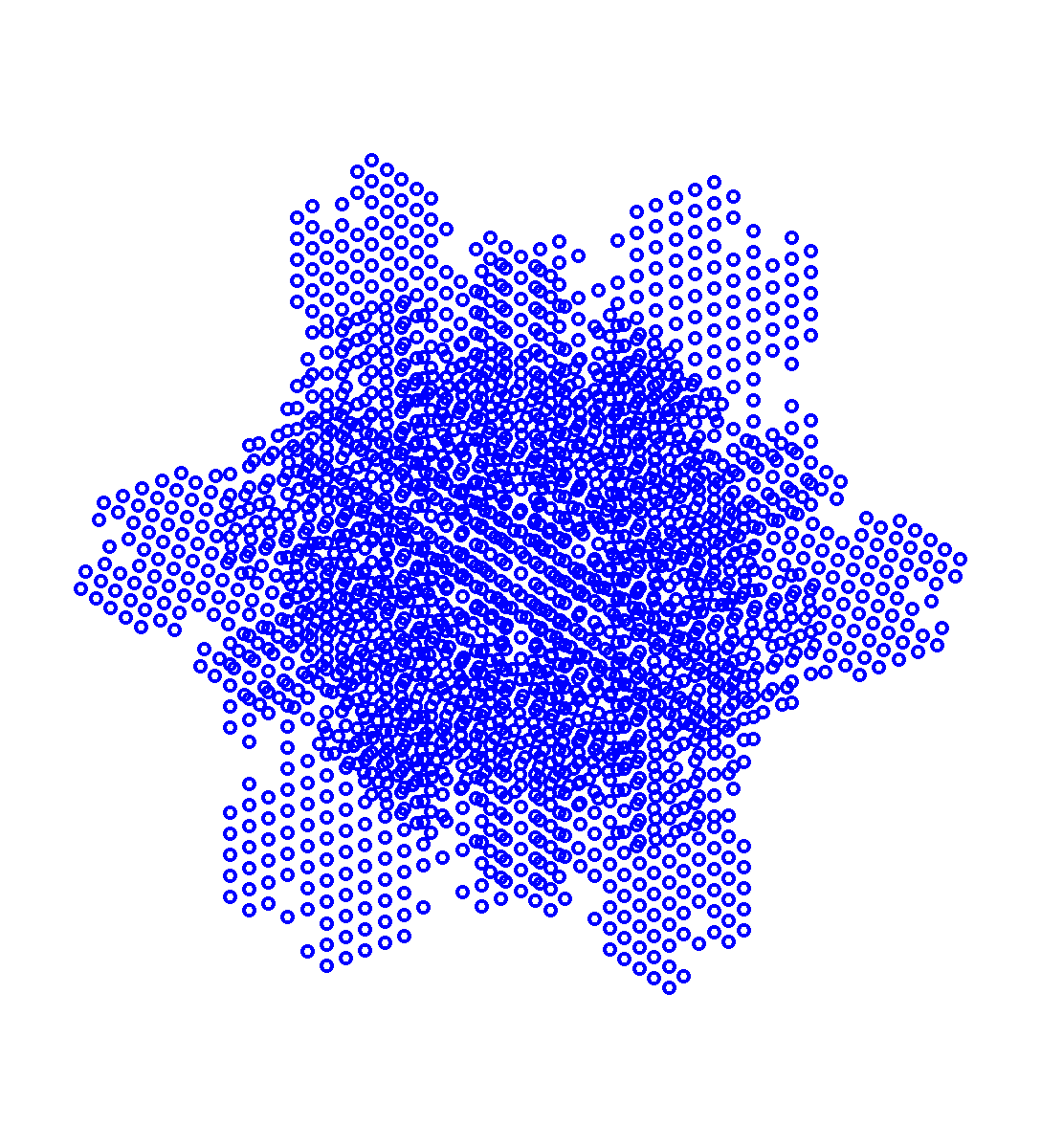}
  \caption*{$\ell = 3/2$}
 \end{subfigure}
 \quad
 \begin{subfigure}{0.2\textwidth}
  \includegraphics[width=\textwidth]{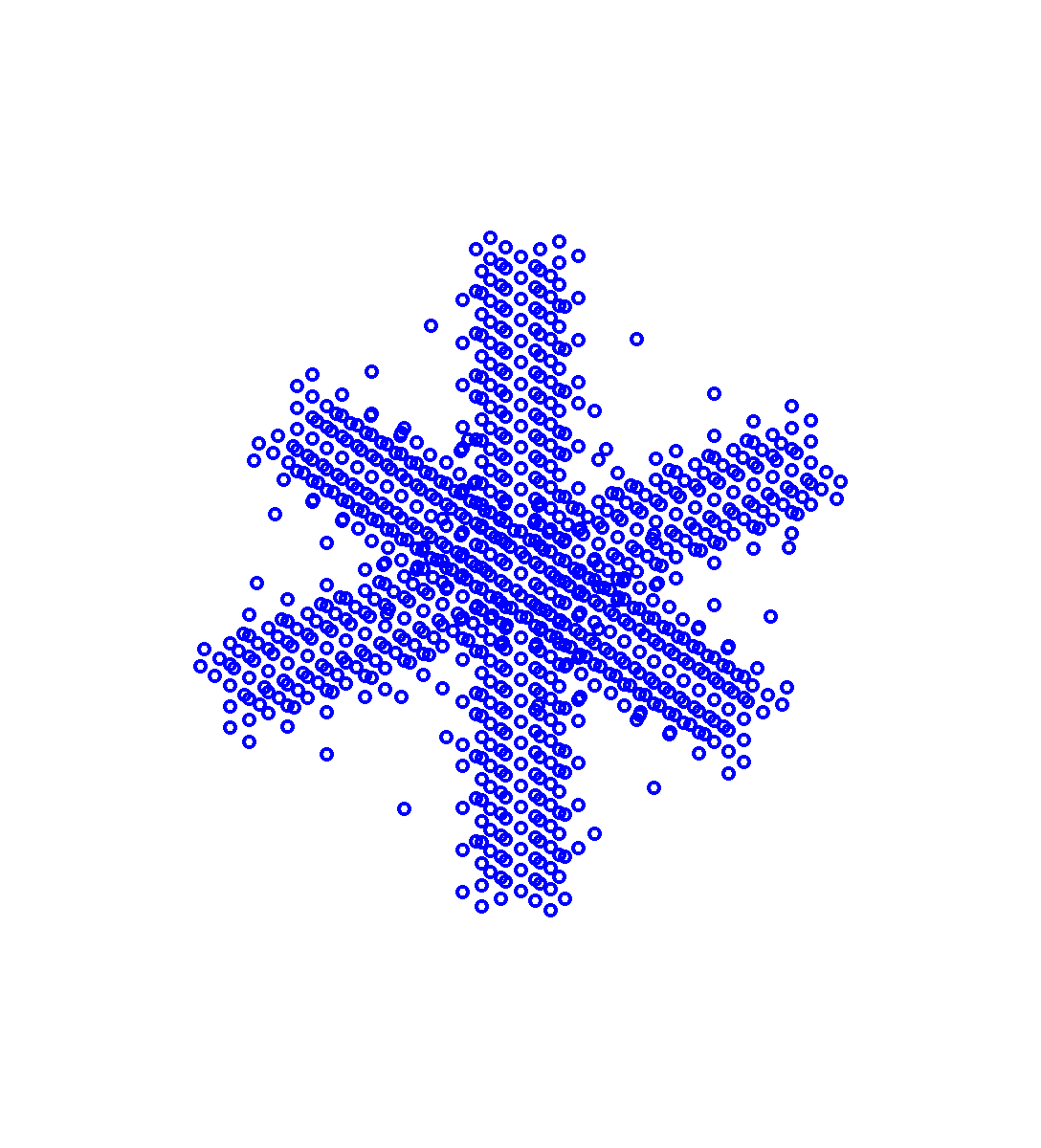}
  \caption*{$\ell = 2$}
 \end{subfigure}
 \caption{Active DOFs at each level $\ell$ of HIF-DE in 3D.}
 \label{fig:hifde3}
\end{figure}

\subsubsection*{Level $\ell$}
Partition $\Omega$ by 2D separators $2^{\ell} mh(j_{1}, \cdot, \cdot)$, $2^{\ell} mh(\cdot, j_{2}, \cdot)$, and $2^{\ell} mh(\cdot, \cdot, j_{3})$ for $1 \leq j_{1}, j_{2}, j_{3} \leq 2^{L - \ell} - 1$ into interior cubic cells $2^{\ell} mh(j_{1} - 1, j_{1}) \times 2^{\ell} mh(j_{2} - 1, j_{2}) \times 2^{\ell} mh(j_{3} - 1, j_{3})$ for $1 \leq j_{1}, j_{2}, j_{3} \leq 2^{L - \ell}$. Let $C_{\ell}$ be the collection of index sets corresponding to the active DOFs of each cell. Elimination with respect to $C_{\ell}$ then gives
\begin{align*}
 A_{\ell + 1/2} = W_{\ell}^{\trans} A_{\ell} W_{\ell}, \quad W_{\ell} = \prod_{c \in C_{\ell}} S_{c},
\end{align*}
where the DOFs $\bigcup_{c \in C_{\ell}} c$ have been eliminated.

\subsubsection*{Level $\ell + 1/2$}
Partition $\Omega$ into Voronoi cells about the face centers
\begin{align*}
 &2^{\ell} mh \left( j_{1}, j_{2} - \frac{1}{2}, j_{3} - \frac{1}{2} \right), & &1 \leq j_{1} \leq 2^{L - \ell} - 1, & &1 \leq j_{2}, j_{3} \leq 2^{L - \ell},\\
 &2^{\ell} mh \left( j_{1} - \frac{1}{2}, j_{2}, j_{3} - \frac{1}{2} \right), & &1 \leq j_{2} \leq 2^{L - \ell} - 1, & &1 \leq j_{1}, j_{3} \leq 2^{L - \ell},\\
 &2^{\ell} mh \left( j_{1} - \frac{1}{2}, j_{2} - \frac{1}{2}, j_{3} \right), & &1 \leq j_{3} \leq 2^{L - \ell} - 1, & &1 \leq j_{1}, j_{2} \leq 2^{L - \ell}.
\end{align*}
Let $C_{\ell + 1/2}$ be the collection of index sets corresponding to the active DOFs of each cell. Skeletonization with respect to $C_{\ell + 1/2}$ then gives
\begin{align*}
 A_{\ell + 1} = \skel_{C_{\ell + 1/2}} (A_{\ell + 1/2}) \approx U_{\ell + 1/2}^{\trans} A_{\ell + 1/2} U_{\ell + 1/2}, \quad U_{\ell + 1/2} = \prod_{c \in C_{\ell + 1/2}} Q_{c} S_{\rd{c}},
\end{align*}
where the DOFs $\bigcup_{c \in C_{\ell + 1/2}} \rd{c}$ have been eliminated.

\subsubsection*{Level $L$}
Combining the approximation over all levels gives a factorization of the form \eqref{eqn:hifde2}. The overall procedure is the same as that in Algorithm \ref{alg:hifde2}.

\subsection{Accelerated Compression}
\label{sec:hifde:accel-comp}
A dominant contribution to the cost of HIF-DE is computing IDs for skeletonization. The basic operation required is the construction of an ID of $(A_{\ell + 1/2})_{c^{\cmp},c}$, where $c \in C_{\ell + 1/2}$ and $c^{\cmp} = s_{\ell + 1/2} \setminus c$, following Lemma \ref{lem:skel}. We hereafter drop the dependence on $\ell$ for notational convenience. Note that $A_{c^{\cmp},c}$ is a tall-and-skinny matrix of size $O(N) \times |c|$, so forming its ID takes at least $O(N|c|)$ work. By construction, however, $A_{c^{\cmp},c}$ is very sparse and can be written without loss of generality as
\begin{align*}
 A_{c^{\cmp},c} =
 \begin{bmatrix}
  A_{c^{\nbr},c}\\
  0
 \end{bmatrix},
\end{align*}
where the DOFs $c^{\nbr}$ are restricted to the immediately adjacent edges or faces, as appropriate. Thus, $|c^{\nbr}| = O(|c|)$ and an ID of the much smaller matrix $A_{c^{\nbr},c}$ of size $O(|c|) \times |c|$ suffices. In other words, the global compression of $A_{c^{\cmp},c}$ can be performed via the local compression of $A_{c^{\nbr},c}$. This observation is critical for reducing the asymptotic complexity.

We can pursue further acceleration by optimizing $|c^{\nbr}|$ as follows. Consider the reference domain configuration depicted in Figure \ref{fig:accel-comp}, which shows the active DOFs $s_{\ell + 1/2}$ after interior cell elimination at level $\ell$ in 2D.
\begin{figure}
 \includegraphics{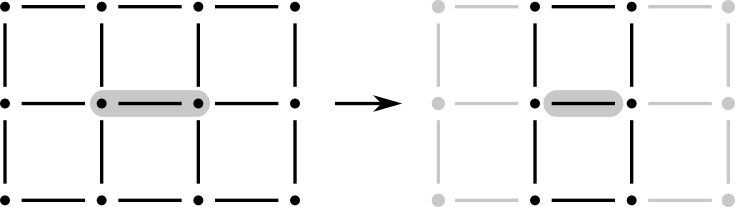}
 \caption{Accelerated compression by exploiting sparsity. In 2D, the number of neighboring edges (left) that must be included when skeletonizing a given edge (gray outline) can be substantially reduced by restricting to only the interior DOFs of that edge (right). An analogous setting applies for faces in 3D.}
 \label{fig:accel-comp}
\end{figure}
The Voronoi partitioning scheme clearly groups together all interior DOFs of each edge, but those at the corner points $2^{\ell} mh(j_{1}, j_{2})$ for $1 \leq j_{1}, j_{2} \leq 2^{L - \ell} - 1$ are equidistant to multiple Voronoi centers and can be assigned arbitrarily (or, in fact, not at all). Let $c \in C_{\ell + 1/2}$ be a given edge and suppose that it includes both of its endpoints. Then its neighbor set $c^{\nbr}$ includes all immediately adjacent edges as shown (left). But the only DOFs in $c$ that interact with the edges to the left or right are precisely the corresponding corner points. Therefore, we can reduce $c^{\nbr}$ to only those edges belonging to the two cells on either side of the edge defining $c$ by restricting to only its interior DOFs (right), i.e., we exclude from $C_{\ell + 1/2}$ all corner points. This can also be interpreted as pre-selecting the corner points as skeletons (as must be the case because of the sparsity pattern of $A_{c^{\cmp},c}$) and suitably modifying the remaining computation. In 2D, this procedure lowers the cost of the ID by about a factor of $17/6 = 2.8333...$. In 3D, an analogous situation holds for faces with respect to ``corner'' edges and the cost is reduced by a factor of $37/5 = 7.4$.

It is also possible to accelerate the ID using fast randomized methods \cite{halko:2011:siam-rev} based on compressing $\Phi_{c} A_{c^{\nbr},c}$, where $\Phi_{c}$ is a small Gaussian random sampling matrix. However, we did not find a significant improvement in performance and so did not use this optimization in our tests for simplicity (see also Section \ref{sec:hifde:3dx}).

\subsection{Optimal Low-Rank Approximation}
Although we have built our algorithms around the ID, it is actually not essential (at least with HIF-DE as presently formulated) and other low-rank approximations can just as well be used. Perhaps the most natural of these is the singular value decomposition (SVD), which is optimal in the sense that it achieves the minimal approximation error for a given rank \cite{golub:1996:johns-hopkins-univ}. Recall that the SVD of a matrix $A \in \mathbb{R}^{m \times n}$ is a factorization of the form $A = U \Sigma V^{\trans}$, where $U \in \mathbb{R}^{m \times m}$ and $V \in \mathbb{R}^{n \times n}$ are orthogonal, and $\Sigma \in \mathbb{R}^{m \times n}$ is diagonal with the singular values of $A$ as its entries. The following is the analogue of Corollary \ref{cor:id-sparse} using the SVD.

\begin{lemma}
 Let $A \in \mathbb{R}^{m \times n}$ with rank $k \leq \min (m, n)$ and SVD
 \begin{align*}
  A = U \Sigma V^{\trans} =
  \begin{bmatrix}
   U_{1} & U_{2}
  \end{bmatrix}
  \begin{bmatrix}
   0\\
   & \Sigma_{2}
  \end{bmatrix}
  \begin{bmatrix}
   V_{1} & V_{2}
  \end{bmatrix}^{\trans},
 \end{align*}
 where $\Sigma_{2} \in \mathbb{R}^{k \times k}$. Then
 \begin{align*}
  U^{\trans} A = \Sigma V^{\trans} =
  \begin{bmatrix}
   0\\
   \Sigma_{2} V_{2}^{\trans}
  \end{bmatrix}, \quad AV = U \Sigma =
  \begin{bmatrix}
   0 & U_{2} \Sigma_{2}
  \end{bmatrix}.
 \end{align*}
\end{lemma}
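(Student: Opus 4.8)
The plan is to deduce both identities directly from the orthogonality of $U$ and $V$ together with the rank-$k$ block structure of $\Sigma$; no analysis is involved, only a check of block dimensions. First I would record the block sizes forced by the statement: writing $U = \begin{bmatrix} U_{1} & U_{2} \end{bmatrix}$ and $V = \begin{bmatrix} V_{1} & V_{2} \end{bmatrix}$ with $\Sigma_{2} \in \mathbb{R}^{k \times k}$, consistency of the product $U \Sigma V^{\trans}$ requires $U_{1} \in \mathbb{R}^{m \times (m - k)}$, $U_{2} \in \mathbb{R}^{m \times k}$, $V_{1} \in \mathbb{R}^{n \times (n - k)}$, and $V_{2} \in \mathbb{R}^{n \times k}$, so that $\Sigma \in \mathbb{R}^{m \times n}$ has the $2 \times 2$ block form with a leading zero block of size $(m - k) \times (n - k)$, an anti-diagonal pair of zero blocks, and trailing block $\Sigma_{2}$. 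Here is exactly where the rank hypothesis enters: since $A$ has rank $k$, the diagonal of $\Sigma$ has precisely $k$ nonzero entries, so the leading block is genuinely the zero matrix rather than a placeholder; this is what makes the stated factored form of $\Sigma$ legitimate.

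Next, for the first identity I would left-multiply $A = U \Sigma V^{\trans}$ by $U^{\trans}$ and invoke $U^{\trans} U = I_{m}$ (orthogonality of $U$) to obtain $U^{\trans} A = \Sigma V^{\trans}$. Expanding the block product of $\Sigma$ against $\begin{bmatrix} V_{1}^{\trans} \\ V_{2}^{\trans} \end{bmatrix}$ then collapses the first block row to zero and leaves $\Sigma_{2} V_{2}^{\trans}$ in the second, which is the claimed expression. The second identity is obtained symmetrically: right-multiply $A = U \Sigma V^{\trans}$ by $V$, use $V^{\trans} V = I_{n}$ to get $A V = U \Sigma$, and carry out the block product of $\begin{bmatrix} U_{1} & U_{2} \end{bmatrix}$ against $\Sigma$, which annihilates the first block column and leaves $U_{2} \Sigma_{2}$ in the second.

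The only point requiring any care is keeping the partitions of $U$, $\Sigma$, and $V$ mutually consistent so that the block multiplications are conformable, and recognizing that the ``$0$'' displayed in the factored $\Sigma$ denotes the full $(m - k) \times (n - k)$ zero block; there is no substantive obstacle. As with Corollary \ref{cor:id-sparse}, the content of the statement is interpretive rather than technical: it exhibits the SVD as a sparsifying change of basis that decouples the redundant directions (spanned by $U_{1}$ on the left and $V_{1}$ on the right), in exact parallel with how the ID sparsifies $A$ via $T_{q}$.
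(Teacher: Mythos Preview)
Your proof is correct. The paper states this lemma without proof, as it is an immediate consequence of the orthogonality of $U$ and $V$ together with the block structure of $\Sigma$; your argument fills in exactly these elementary details and there is nothing to compare against.
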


The analogue of Lemma \ref{lem:skel} is then:

\begin{lemma}
 Let
 \begin{align*}
  A =
  \begin{bmatrix}
   A_{pp} & A_{qp}^{\trans}\\
   A_{qp} & A_{qq}
  \end{bmatrix}
 \end{align*}
 be symmetric for $A_{qp}$ low-rank with SVD
 \begin{align*}
   A_{qp} = A_{p^{\cmp},p} = U_{p} \Sigma_{p} V_{p}^{\trans} =
  \begin{bmatrix}
   U_{p,1} & U_{p,2}
  \end{bmatrix}
  \begin{bmatrix}
   0\\
   & \Sigma_{p,2}
  \end{bmatrix}
  \begin{bmatrix}
   V_{p,1} & V_{p,2}
  \end{bmatrix}^{\trans}.
 \end{align*}
 If $Q_{p} = \diag (V_{p}, I)$, then
 \begin{align}
  Q_{p}^{\trans} A Q_{p} =
  \begin{bmatrix}
   V_{p}^{\trans} A_{pp} V_{p} & \begin{bmatrix} 0\\\Sigma_{p,2} U_{p,2}^{\trans} \end{bmatrix}\\
   \begin{bmatrix} 0 & U_{p,2} \Sigma_{p,2} \end{bmatrix} & A_{qq}
  \end{bmatrix} \equiv
  \begin{bmatrix}
   B_{p_{1},p_{1}} & B_{p_{2},p_{1}}^{\trans}\\
   B_{p_{2},p_{1}} & B_{p_{2},p_{2}} & \Sigma_{p,2} U_{p,2}^{\trans}\\
   & U_{p,2} \Sigma_{p,2} & A_{qq}
  \end{bmatrix}
  \label{eqn:svd-sparse}
 \end{align}
 on conformably partitioning $p = p_{1} \cup p_{2}$, so
 \begin{align*}
  S_{p_{1}}^{\trans} Q_{p}^{\trans} A Q_{p} S_{p_{1}} =
  \begin{bmatrix}
   D_{p_{1}}\\
   & \tilde{B}_{p_{2},p_{2}} & \Sigma_{p,2} U_{p,2}^{\trans}\\
   & U_{p,2} \Sigma_{p,2} & A_{qq}
  \end{bmatrix},
 \end{align*}
 where $S_{p_{1}}$ is the elimination operator of Lemma \ref{lem:sparse-elim} associated with $p_{1}$ and $\tilde{B}_{p_{2},p_{2}} = B_{p_{2},p_{2}} - B_{p_{2},p_{1}} B_{p_{1},p_{1}}^{-1} B_{p_{2},p_{1}}^{\trans}$, assuming that $B_{p_{1},p_{1}}$ is nonsingular.
\end{lemma}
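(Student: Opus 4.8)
The plan is to prove this in exactly the same way as Lemma \ref{lem:skel}, but with the ID replaced by the SVD-based sparsification of the preceding lemma. First I would apply that lemma to the low-rank block $A_{qp} = A_{p^{\cmp},p} = U_{p}\Sigma_{p} V_{p}^{\trans}$: since $AV = \begin{bmatrix} 0 & U_{p,2}\Sigma_{p,2} \end{bmatrix}$ for the analogous right-multiplication identity, right-multiplying $A$ by $Q_{p} = \diag(V_{p}, I)$ acts on the columns indexed by $p$ by the orthogonal change of basis $V_{p}$, turning $A_{qp} = A_{p^{\cmp},p}$ into $U_{p}\Sigma_{p}$, whose first columns (those corresponding to the zero singular values, i.e.\ to $p_{1}$) vanish. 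Left-multiplying by $Q_{p}^{\trans}$ does the symmetric thing to the rows. The $(p,p)$ block becomes $V_{p}^{\trans} A_{pp} V_{p}$ and the $(q,q)$ block is untouched because $Q_{p}$ acts as the identity on $q$. Writing $V_{p}^{\trans} A_{pp} V_{p}$ in the conformal $p = p_{1}\cup p_{2}$ partition as $\begin{bmatrix} B_{p_{1},p_{1}} & B_{p_{2},p_{1}}^{\trans}\\ B_{p_{2},p_{1}} & B_{p_{2},p_{2}} \end{bmatrix}$ gives exactly \eqref{eqn:svd-sparse}; the key point to note is that the row block indexed by $p_{1}$ has no interaction with $q$, since $\Sigma_{p}U_{p}^{\trans}$ has zero rows there.

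Second, with $p_{1}$ now decoupled from $q$, I would invoke Lemma \ref{lem:sparse-elim} with the roles $(p,q,r) \mapsto (p_{1}, p_{2}, q)$ applied to the matrix \eqref{eqn:svd-sparse}. The hypothesis of that lemma is precisely that the $p_{1}$-block $B_{p_{1},p_{1}}$ is nonsingular, which is assumed. Conjugating by the elimination operator $S_{p_{1}}$ produces the block-diagonal $D_{p_{1}}$ in the $(p_{1},p_{1})$ slot, the Schur complement $\tilde{B}_{p_{2},p_{2}} = B_{p_{2},p_{2}} - B_{p_{2},p_{1}} B_{p_{1},p_{1}}^{-1} B_{p_{2},p_{1}}^{\trans}$ in the $(p_{2},p_{2})$ slot, and leaves the $p_{2}$--$q$ and $q$--$q$ interactions, namely $\Sigma_{p,2}U_{p,2}^{\trans}$ and $A_{qq}$, unchanged (again because the index set $q$ plays the role of the ``large, untouched'' set $r$ in Lemma \ref{lem:sparse-elim}). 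This is exactly the claimed right-hand side.

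The verification is almost entirely bookkeeping, so there is no substantive obstacle; the one place to be careful is confirming that the zero pattern in \eqref{eqn:svd-sparse} is genuine — i.e.\ that the $p_{1}$ rows and columns really are annihilated — which follows because $p_{1}$ indexes the columns of $V_{p}$ paired with the zero block of $\Sigma_{p}$, and the preceding lemma's identities $U_{p}^{\trans} A_{qp} = \Sigma_{p} V_{p}^{\trans} = \begin{bmatrix} 0\\ \Sigma_{p,2}V_{p,2}^{\trans} \end{bmatrix}$ and $A_{qp} V_{p} = U_{p}\Sigma_{p} = \begin{bmatrix} 0 & U_{p,2}\Sigma_{p,2} \end{bmatrix}$ make this precise. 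One should also remark, as in the discussion following Lemma \ref{lem:skel}, that in the numerical setting the SVD only approximately sparsifies $A$ (the block of small-but-nonzero singular values being truncated), so the displayed equalities hold only up to a controlled truncation error; but since the lemma is stated for exactly rank-$k$ $A_{qp}$, equality is exact as written.
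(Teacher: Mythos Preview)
Your proposal is correct and follows exactly the natural line of argument implied by the paper's presentation. The paper itself states this lemma without proof (just as it does for Lemma~\ref{lem:skel}), treating it as a routine computation: conjugate by $Q_{p}=\diag(V_{p},I)$ and then invoke Lemma~\ref{lem:sparse-elim} with $(p_{1},p_{2},q)$ in the roles of $(p,q,r)$, which is precisely what you do.
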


The external interactions $U_{p,2} \Sigma_{p,2}$ with the SVD ``skeletons'' $p_{2}$ are a linear combination of the original external interactions $A_{qp}$ involving all of $p$. Thus, the DOFs $p_{2}$ are, in a sense, delocalized across all points associated with $p$, though they can still be considered to reside on the separators.

The primary advantages of using the SVD over the ID are that (1) it can achieve better compression since a smaller rank may be required for any given precision and (2) the sparsification matrix $Q_{p}$ in \eqref{eqn:svd-sparse} is orthogonal, which provides improved numerical stability, especially when used in a multilevel setting such as \eqref{eqn:hifde2}. However, there are several disadvantages as well, chief among them:
\begin{itemize}
 \item
  the extra computational cost, which typically is about $2$--$3$ times larger;
 \item
  the need to overwrite matrix entries involving the index set $q$ in \eqref{eqn:svd-sparse}, which we remark is still sparse; and
 \item
  the loss of precise geometrical information associated with each DOF.
\end{itemize}
Of these, the last is arguably the most important since it destroys the dimensional reduction interpretation of HIF-DE, which is crucial for achieving estimated $O(N)$ complexity in 3D, as we shall see next.

\subsection{Three-Dimensional Variant with Edge Skeletonization}
\label{sec:hifde:3dx}
In Section \ref{sec:hifde:3d}, we presented a ``basic'' version of HIF-DE in 3D based on interior cell elimination and face skeletonization, which from Figure \ref{fig:hifde3} is seen to retain active DOFs only on the edges of cubic cells. All fronts are hence reduced to 1D, which yields estimated $O(N \log N)$ complexity for the algorithm (Section \ref{sec:hifde:complexity}). Here, we seek to further accelerate this to $O(N)$ by skeletonizing each cell edge and reducing it completely to 0D, as guided by our assumptions on SCIs. However, a complication now arises in that fill-in can occur, which can be explained as follows.

Consider the 3D problem and suppose that both interior cell elimination and face skeletonization have been performed. Then as noted in Section \ref{sec:hifde:accel-comp}, the remaining DOFs with respect to each face will be those on its boundary edges plus a few interior layers near the edges (Figure \ref{fig:fill-in}A) (the depth of these layers depends on the compression tolerance $\epsilon$).
\begin{figure}
 \centering
 \begin{subfigure}{116px}
  \centering
  \includegraphics{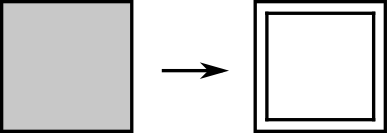}
  \caption{Face skeletonization.}
 \end{subfigure}
 \quad
 \begin{subfigure}{153px}
  \centering
  \includegraphics{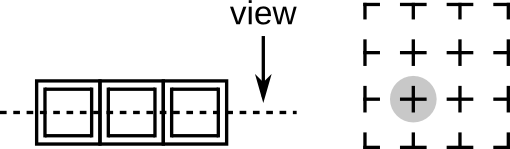}
  \caption{Edge configuration from top view at interior slice.}
 \end{subfigure}
 \caption{Loss of sparsity from edge skeletonization in 3D. Face skeletonization (left) typically leaves several layers of DOFs along the perimeter, which lead to thick edges (right) that connect DOFs across cubic cell boundaries upon skeletonization ($3 \times 3$ grid of cells shown in example with a thick edge outlined in gray).}
 \label{fig:fill-in}
\end{figure}
Therefore, grouping the active DOFs by cell edge gives ``thick'' edges consisting not only of the DOFs on the edges themselves but also those in the interior layers in the four transverse directions surrounding each edge (Figure \ref{fig:fill-in}B). Skeletonizing these thick edges then generates SCIs acting on the skeletons of each edge group by Lemma \ref{lem:skel}, which generally causes DOFs to interact across cubic cell boundaries. The consequence of this is that the next level of interior cell elimination must take into account, in effect, thick separators of width twice the layer depth, which can drastically reduce the number of DOFs eliminated and thus increase the cost. Of course, this penalty does not apply at any level $\ell$ before edge skeletonization has occurred. As a rule of thumb, edge skeletonization should initially be skipped until it reduces the number of active DOFs by a factor of at least the resulting separator width.

For completeness, we now describe HIF-DE in 3D with edge skeletonization following the structure of Section \ref{sec:hifde:3d}, where interior cell elimination (3D to 2D) at level $\ell$ is supplemented with face skeletonization (2D to 1D) at level $\ell + 1/3$ and edge skeletonization (1D to 0D) at level $\ell + 2/3$ for each $\ell = 0, 1, \dots, L - 1$. Figure \ref{fig:hifde3x} shows the active DOFs at each level for a representative example, from which we observe that further compression is clearly achieved on comparing with Figure \ref{fig:hifde3}.
\begin{figure}
 \centering
 \begin{subfigure}{0.2\textwidth}
  \includegraphics[width=\textwidth]{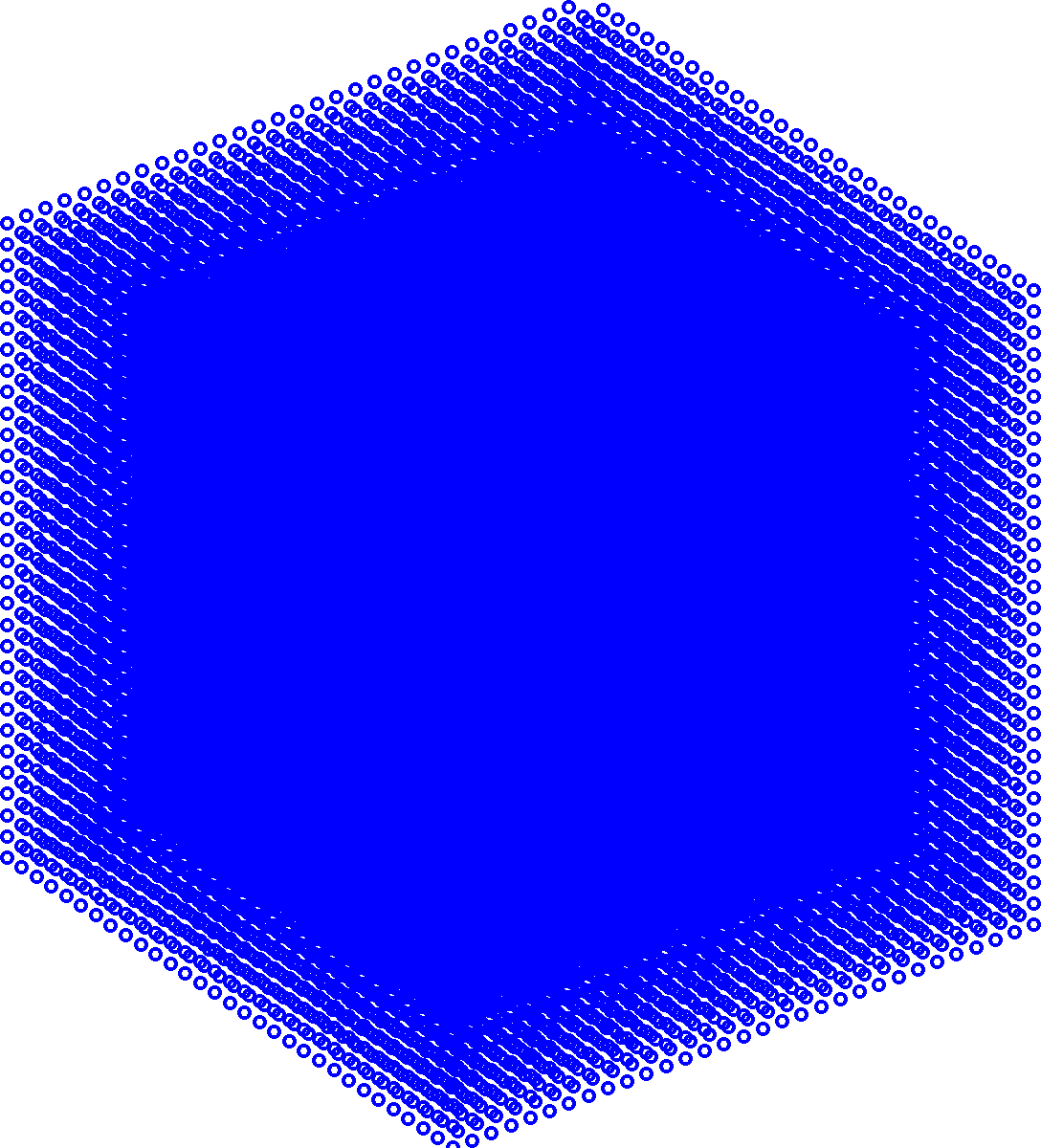}
  \caption*{$\ell = 0$}
 \end{subfigure}
 \quad
 \begin{subfigure}{0.2\textwidth}
  \includegraphics[width=\textwidth]{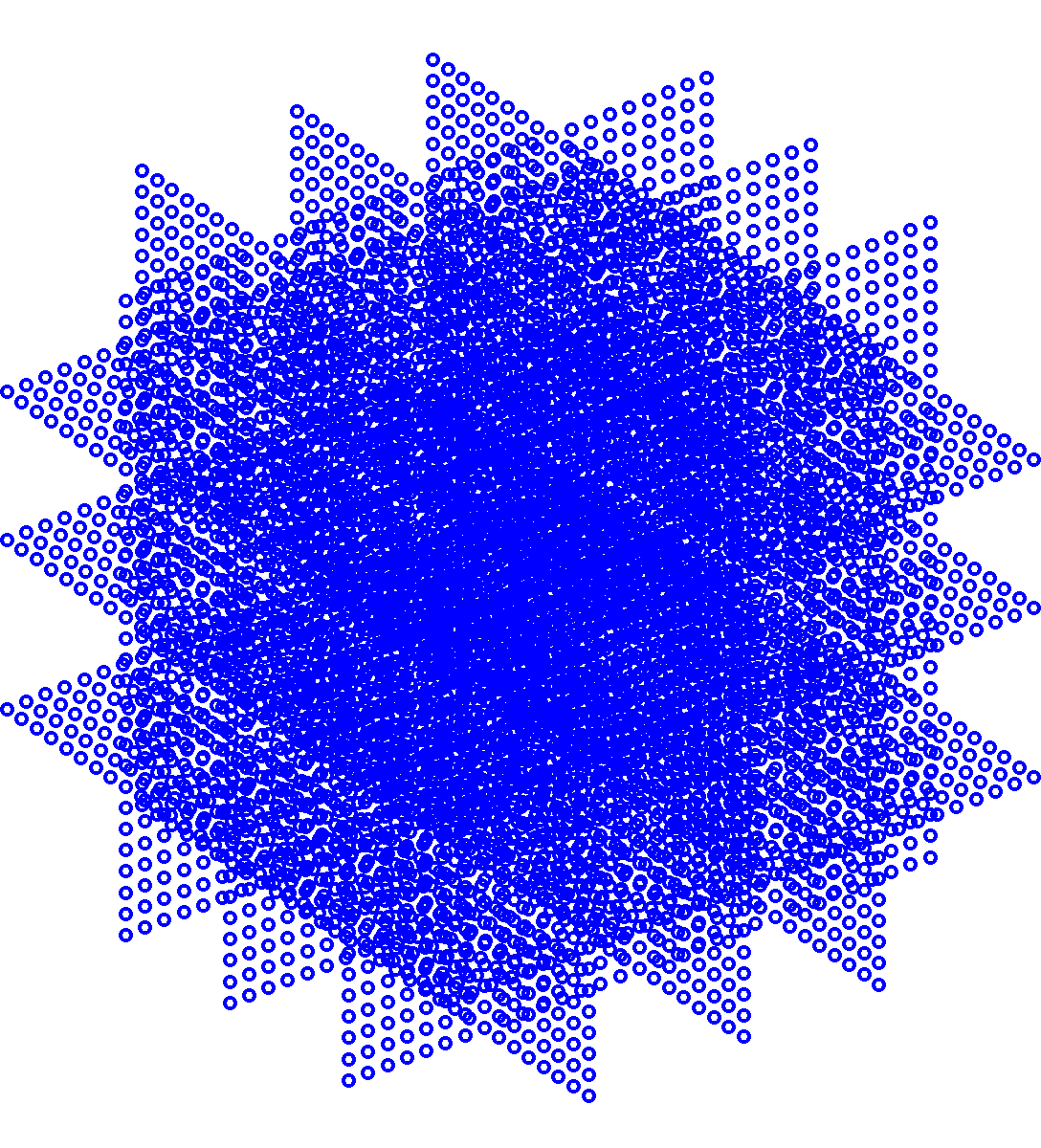}
  \caption*{$\ell = 1/3$}
 \end{subfigure}
 \quad
 \begin{subfigure}{0.2\textwidth}
  \includegraphics[width=\textwidth]{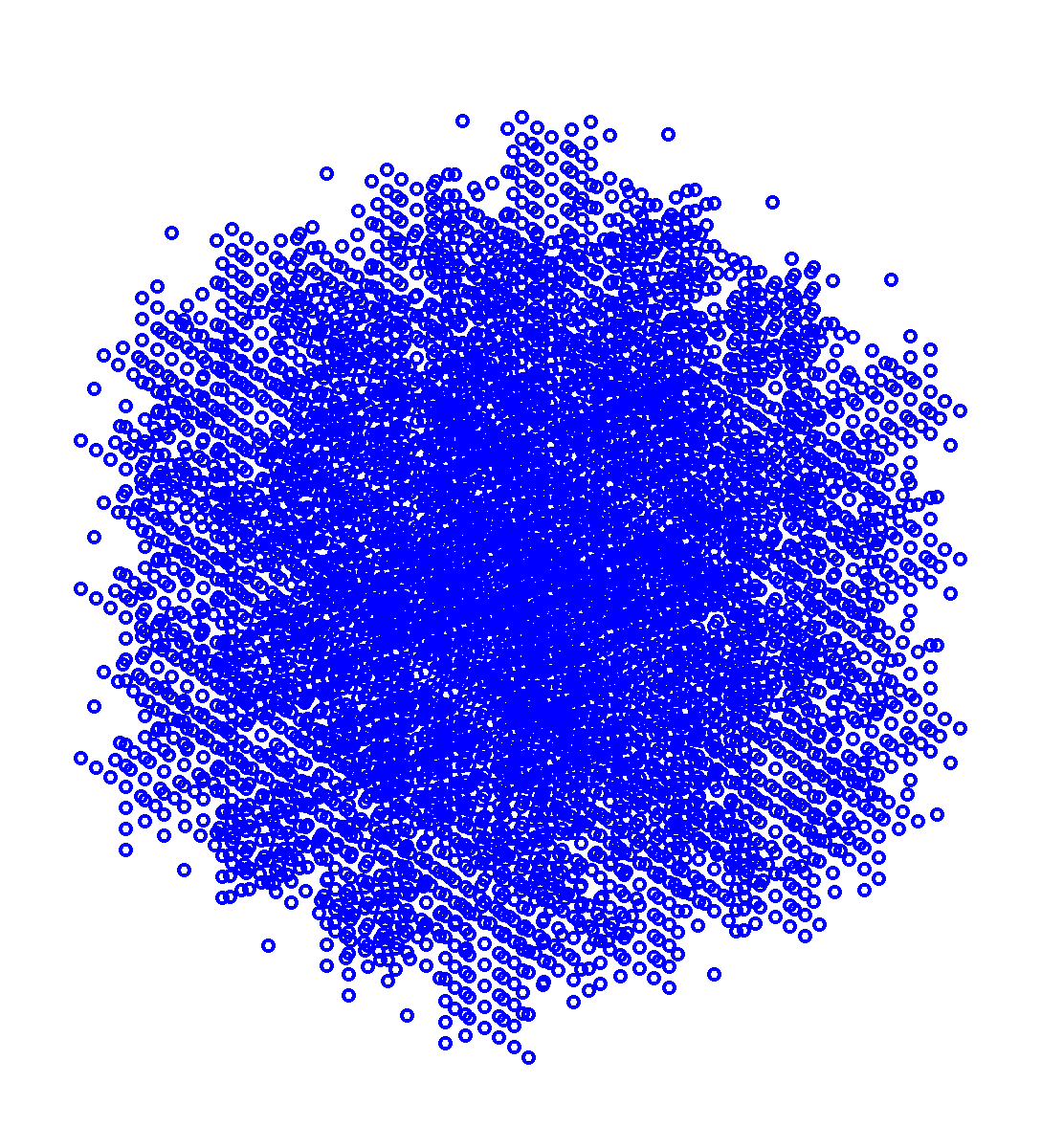}
  \caption*{$\ell = 2/3$}
 \end{subfigure}
 \quad
 \begin{subfigure}{0.2\textwidth}
  \includegraphics[width=\textwidth]{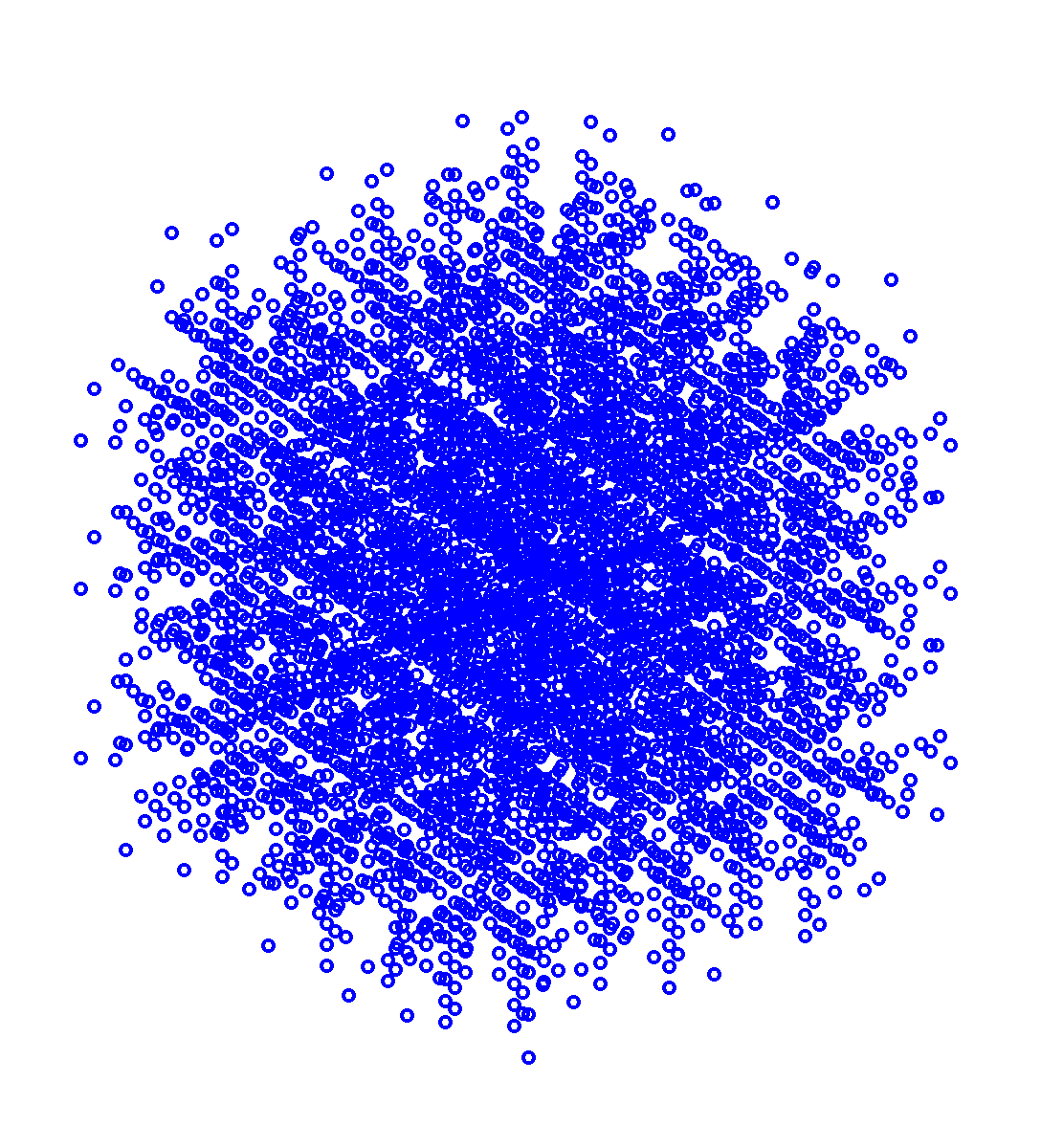}
  \caption*{$\ell = 1$}
 \end{subfigure}
 \\~\\~\\
 \begin{subfigure}{0.2\textwidth}
  \includegraphics[width=\textwidth]{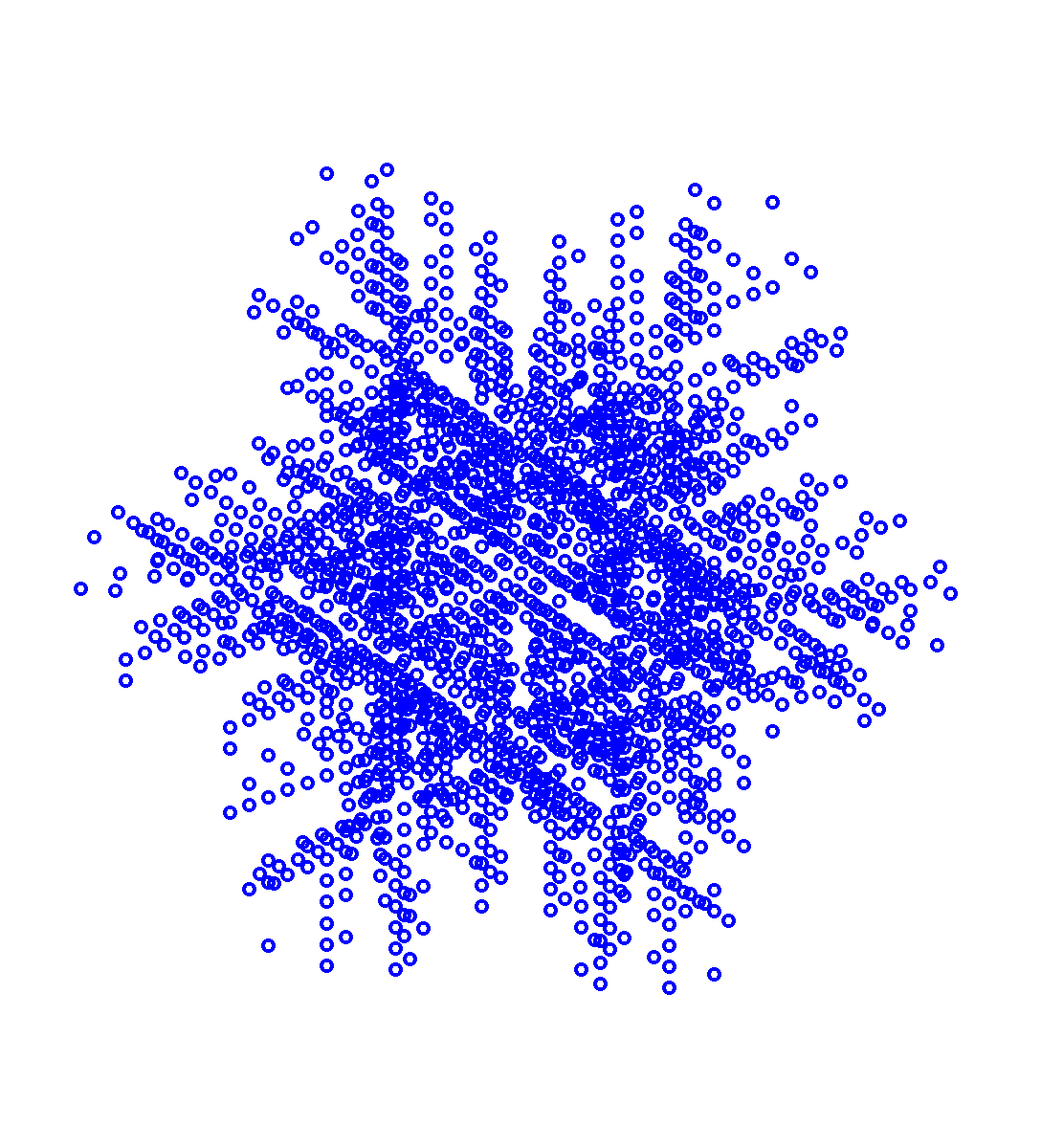}
  \caption*{$\ell = 4/3$}
 \end{subfigure}
 \quad
 \begin{subfigure}{0.2\textwidth}
  \includegraphics[width=\textwidth]{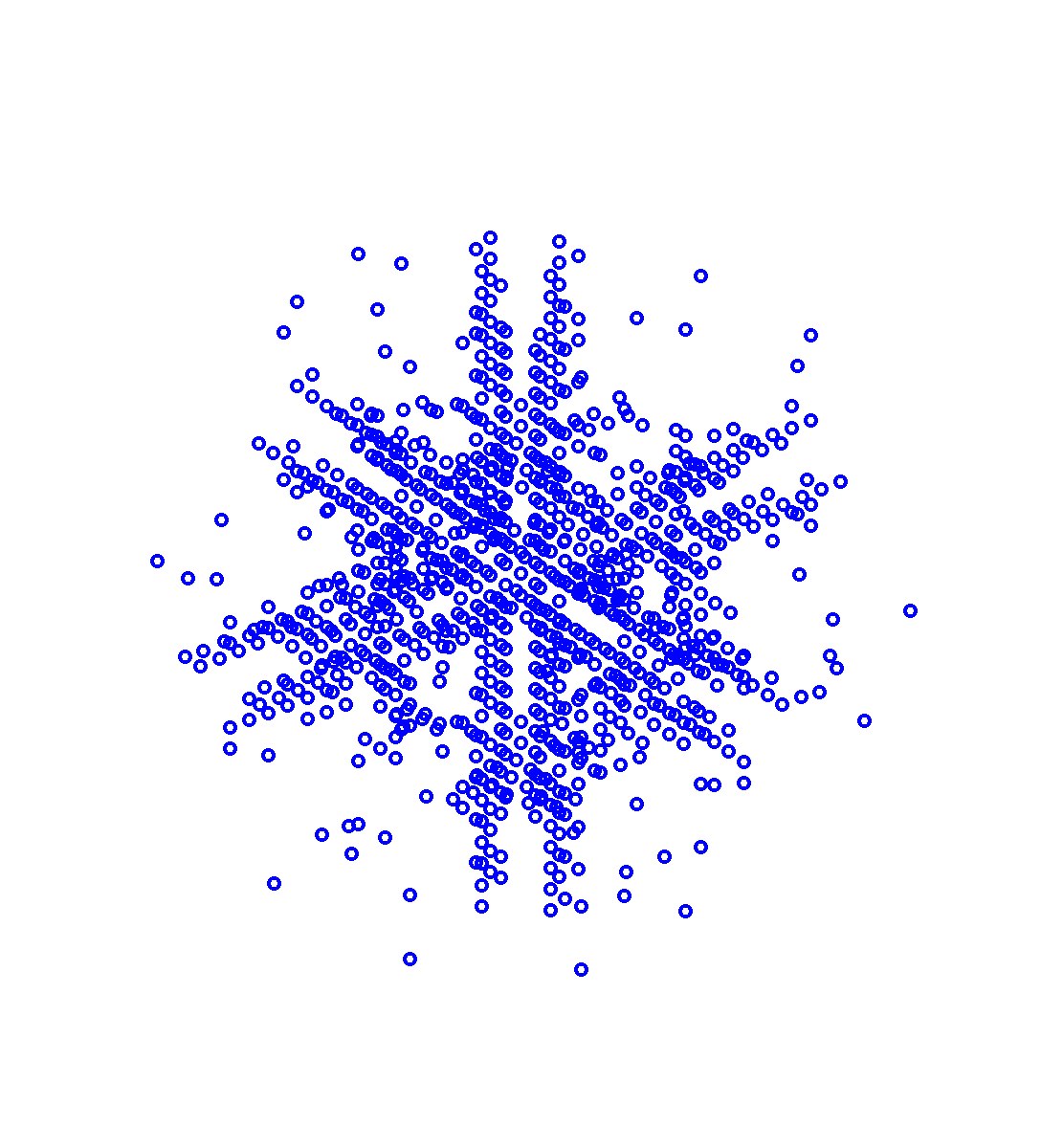}
  \caption*{$\ell = 5/3$}
 \end{subfigure}
 \quad
 \begin{subfigure}{0.2\textwidth}
  \includegraphics[width=\textwidth]{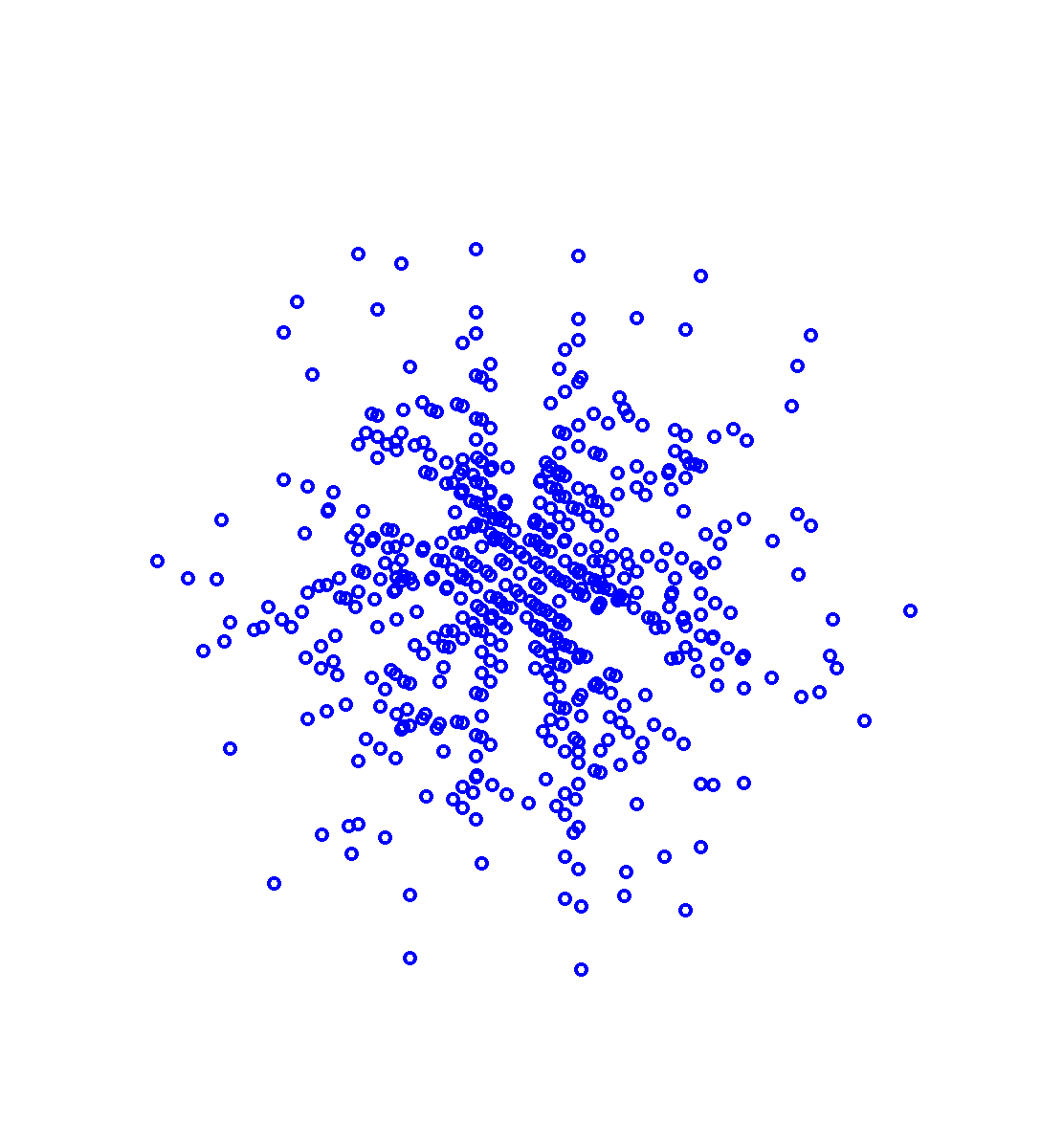}
  \caption*{$\ell = 2$}
 \end{subfigure}
 \caption{Active DOFs at each level $\ell$ of HIF-DE in 3D with edge skeletonization.}
 \label{fig:hifde3x}
\end{figure}

\subsubsection*{Level $\ell$}
Partition $\Omega$ by separators into interior cells. If $\ell = 0$, then these are the same as those in the standard HIF-DE (Section \ref{sec:hifde:3d}), but if $\ell \geq 1$, then this must, in general, be done somewhat more algebraically according to the sparsity pattern of $A_{\ell}$. We propose the following procedure. First, partition all active DOFs into Voronoi cells about the cell centers $2^{\ell} mh(j_{1} - 1/2, j_{2} - 1/2, j_{3} - 1/2)$ for $1 \leq j_{1}, j_{2}, j_{3} \leq 2^{L - \ell}$. This creates an initial geometric partitioning $C_{\ell}$, which we remark is unbuffered (no separators) and so does not satisfy the hypotheses of Section \ref{sec:sparse-elim}. Then for each $c \in C_{\ell}$ in some order:
\begin{enumerate}
 \itemsep 1ex
 \item
  Let
  \begin{align*}
   c^{\ext} = \{ i \in c : (A_{\ell})_{c^{\cmp},i} \neq 0 \}, \quad c^{\cmp} = \left( \bigcup_{c' \in C_{\ell}} c' \right) \setminus c
  \end{align*}
  be the set of indices of the DOFs in $c$ with external interactions.
 \item
  Replace $c$ by $c \setminus c^{\ext}$ in $C_{\ell}$.
\end{enumerate}
On termination, this process produces a collection $C_{\ell}$ of interior cells with minimal separators adaptively constructed. Elimination with respect to $C_{\ell}$ then gives
\begin{align*}
 A_{\ell + 1/3} = W_{\ell}^{\trans} A_{\ell} W_{\ell}, \quad W_{\ell} = \prod_{c \in C_{\ell}} S_{c},
\end{align*}
where the DOFs $\bigcup_{c \in C_{\ell}} c$ have been eliminated.

\subsubsection*{Level $\ell + 1/3$}
Partition $\Omega$ into Voronoi cells about the face centers
\begin{align*}
 &2^{\ell} mh \left( j_{1}, j_{2} - \frac{1}{2}, j_{3} - \frac{1}{2} \right), & &1 \leq j_{1} \leq 2^{L - \ell} - 1, & &1 \leq j_{2}, j_{3} \leq 2^{L - \ell},\\
 &2^{\ell} mh \left( j_{1} - \frac{1}{2}, j_{2}, j_{3} - \frac{1}{2} \right), & &1 \leq j_{2} \leq 2^{L - \ell} - 1, & &1 \leq j_{1}, j_{2} \leq 2^{L - \ell},\\
 &2^{\ell} mh \left( j_{1} - \frac{1}{2}, j_{2} - \frac{1}{2}, j_{3} \right), & &1 \leq j_{3} \leq 2^{L - \ell} - 1, & &1 \leq j_{1}, j_{2} \leq 2^{L - \ell}.
\end{align*}
Let $C_{\ell + 1/3}$ be the collection of index sets corresponding to the active DOFs of each cell. Skeletonization with respect to $C_{\ell + 1/3}$ then gives
\begin{align*}
 A_{\ell + 2/3} = \skel_{C_{\ell + 1/3}} (A_{\ell + 1/3}) \approx U_{\ell + 1/3}^{\trans} A_{\ell + 1/3} U_{\ell + 1/3}, \quad U_{\ell + 1/3} = \prod_{c \in C_{\ell + 1/3}} Q_{c} S_{\rd{c}},
\end{align*}
where the DOFs $\bigcup_{c \in C_{\ell + 1/3}} \rd{c}$ have been eliminated.

\subsubsection*{Level $\ell + 2/3$}
Partition $\Omega$ into Voronoi cells about the edge centers
\begin{align*}
 &2^{\ell} mh \left( j_{1}, j_{2}, j_{3} - \frac{1}{2} \right), & &1 \leq j_{1}, j_{2} \leq 2^{L - \ell} - 1, & &1 \leq j_{3} \leq 2^{L - \ell},\\
 &2^{\ell} mh \left( j_{1}, j_{2} - \frac{1}{2}, j_{3} \right), & &1 \leq j_{1}, j_{3} \leq 2^{L - \ell} - 1, & &1 \leq j_{2} \leq 2^{L - \ell},\\
 &2^{\ell} mh \left( j_{1} - \frac{1}{2}, j_{2}, j_{3} \right), & &1 \leq j_{2}, j_{3} \leq 2^{L - \ell} - 1, & &1 \leq j_{1} \leq 2^{L - \ell}.
\end{align*}
Let $C_{\ell + 2/3}$ be the collection of index sets corresponding to the active DOFs of each cell. Skeletonization with respect to $C_{\ell + 2/3}$ then gives
\begin{align*}
 A_{\ell + 1} = \skel_{C_{\ell + 2/3}} (A_{\ell + 2/3}) \approx U_{\ell + 2/3}^{\trans} A_{\ell + 2/3} U_{\ell + 2/3}, \quad U_{\ell + 2/3} = \prod_{c \in C_{\ell + 2/3}} Q_{c} S_{\rd{c}},
\end{align*}
where the DOFs $\bigcup_{c \in C_{\ell + 2/3}} \rd{c}$ have been eliminated.

\subsubsection*{Level $L$}
Combining the approximation over all levels gives
\begin{align*}
 D \equiv A_{L} \approx V_{L - 1/3}^{\trans} \cdots V_{2/3}^{\trans} V_{1/3}^{\trans} V_{0}^{\trans} A V_{0} V_{1/3} V_{2/3} \cdots V_{L - 1/3},
\end{align*}
where $V_{\ell}$ is as defined in \eqref{eqn:matrix-factor}, so
\begin{subequations}
 \label{eqn:hifde3x}
 \begin{align}
  A &\approx V_{0}^{-\trans} V_{1/3}^{-\trans} V_{2/3}^{-\trans} \cdots V_{L - 1/3}^{-\trans} D V_{L - 1/3}^{-1} \cdots V_{2/3}^{-1} V_{1/3}^{-1} V_{0}^{-1} \equiv F,\\
  A^{-1} &\approx V_{0} V_{1/3} V_{2/3} \cdots V_{L - 1/3} D^{-1} V_{L - 1/3}^{\trans} \cdots V_{2/3}^{\trans} V_{1/3}^{\trans} V_{0}^{\trans} = F^{-1}.
 \end{align}
\end{subequations}
As in Section \ref{sec:hifde:2d}, if $A$ is SPD, then so are $F$ and $F^{-1}$, provided that very mild conditions hold. We summarize the overall scheme as Algorithm \ref{alg:hifde3x}.
\begin{algorithm}
 \caption{HIF-DE in 3D with edge skeletonization.}
 \label{alg:hifde3x}
 \begin{algorithmic}
  \State $A_{0} = A$ \Comment{initialize}
  \For{$\ell = 0, 1, \dots, L - 1$} \Comment{loop from finest to coarsest level}
   \State $A_{\ell + 1/3} = W_{\ell}^{\trans} A_{\ell} W_{\ell}$ \Comment{eliminate interior cells}
   \State $A_{\ell + 2/3} = \skel_{C_{\ell + 1/3}} (A_{\ell + 1/3}) \approx U_{\ell + 1/3}^{\trans} A_{\ell + 1/3} U_{\ell + 1/3}$ \Comment{skeletonize faces}
   \State $A_{\ell + 1} = \skel_{C_{\ell + 2/3}} (A_{\ell + 2/3}) \approx U_{\ell + 2/3}^{\trans} A_{\ell + 2/3} U_{\ell + 2/3}$ \Comment{skeletonize edges}
  \EndFor
  \State $A \approx V_{0}^{-\trans} V_{1/3}^{-\trans} \cdots V_{L - 1/3}^{-\trans} D V_{L - 1/3}^{-1} \cdots V_{1/3}^{-1} V_{0}^{-1}$ \Comment{generalized LDL decomposition}
 \end{algorithmic}
\end{algorithm}

Unlike for the standard HIF-DE, randomized methods (Section \ref{sec:hifde:accel-comp}) now tend to be inaccurate when compressing SCIs. This could be remedied by considering instead $\Phi_{c} (A_{c^{\nbr},c} A_{c^{\nbr},c}^{\trans})^{\gamma} A_{c^{\nbr},c}$ for some small integer $\gamma = 1, 2, \dots$, but the expense of the extra multiplications usually outweighed any efficiency gains.

\subsection{Complexity Estimates}
\label{sec:hifde:complexity}
We now investigate the computational complexity of HIF-DE. For this, we need to estimate the skeleton size $|\sk{c}|$ for a typical index set $c \in C_{\ell}$ at fractional level $\ell$. This is determined by the rank behavior of SCIs, which we assume satisfy standard multipole estimates \cite{greengard:1987:j-comput-phys,greengard:1997:acta-numer} as motivated by experimental observations. Then it can be shown \cite{ho:2012:siam-j-sci-comput,ho:comm-pure-appl-math} that the typical skeleton size is
\begin{align}
 k_{\ell} =
 \begin{cases}
  O(\ell), & \delta = 1\\
  O(2^{(\delta - 1) \ell}), & \delta \geq 2,
 \end{cases}
 \label{eqn:inter-rank}
\end{align}
where $\delta$ is the intrinsic dimension of a typical DOF cluster at level $\ell$, i.e., $\delta = 1$ for edges (2D and 3D) and $\delta = 2$ for faces (3D only). Note that we have suggestively used the same notation as for the index set size $|c|$ in Section \ref{sec:mf:complexity}, which can be justified by recognizing that the active DOFs $c \in C_{\ell}$ for any $\ell$ are obtained by merging skeletons from at most one integer level prior. We emphasize that \eqref{eqn:inter-rank} has yet to be proven, so all following results should formally be understood as conjectures, albeit ones with strong numerical support (Section \ref{sec:results}).

\begin{theorem}
 \label{thm:hifde}
 Assume that \eqref{eqn:inter-rank} holds. Then the costs of constructing the factorization $F$ in \eqref{eqn:hifde2} or \eqref{eqn:hifde3x} using HIF-DE with accelerated compression and of applying $F$ or $F^{-1}$ are, respectively, $t_{f}, t_{a/s} = O(N)$ in 2D; $t_{f} = O(N \log N)$ and $t_{a/s} = O(N)$ in 3D; and $t_{f}, t_{a/s} = O(N)$ in 3D with edge skeletonization.
\end{theorem}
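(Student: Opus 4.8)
The plan is to follow the proof of Theorem~\ref{thm:mf} almost verbatim, accounting the total cost as a sum over all (now fractional) levels of (number of clusters) $\times$ (cost per cluster), and using the rank estimate~\eqref{eqn:inter-rank} to control the cluster sizes. First I would pin down the per-cluster costs. At an interior-cell level, Lemma~\ref{lem:sparse-elim} shows that eliminating a cell $c$ touches only the $O(|c| + |c^{\nbr}|)$ DOFs in $c \cup c^{\nbr}$, at a cost of $O((|c| + |c^{\nbr}|)^{3})$ to factor and $O((|c| + |c^{\nbr}|)^{2})$ to apply. At a skeletonization level, Lemma~\ref{lem:skel} together with the accelerated-compression reduction of Section~\ref{sec:hifde:accel-comp} replaces the ID of the global block $A_{c^{\cmp},c}$ by an ID of the local block $A_{c^{\nbr},c}$ of size $O(|c^{\nbr}|) \times |c|$; the ID, the Schur-complement updates defining $\skel_{c}(\cdot)$, and the subsequent elimination of $\rd{c}$ then all cost $O((|c| + |c^{\nbr}|)^{3})$ to factor and $O((|c| + |c^{\nbr}|)^{2})$ to apply (the latter also bounding the cost of applying the relevant block of $D^{-1}$). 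In every case the neighbor set $c^{\nbr}$ stays confined to the immediately adjacent edges or faces at the same scale, so $|c^{\nbr}| = O(|c|)$, and the per-cluster factor and apply costs are $O(k_{\ell}^{3})$ and $O(k_{\ell}^{2})$ with $k_{\ell} \equiv |c|$.

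Next I would count. At a level with integer part $\ell$ there are $O(2^{d(L - \ell)})$ clusters, whether these are interior cells or cell faces/edges. For the sizes, \eqref{eqn:inter-rank} applies at each skeletonization level with $\delta$ the intrinsic dimension of the clusters there --- $\delta = d - 1$ for the first (face, in 3D) skeletonization, down to $\delta = 1$ for the last (edge) skeletonization --- while at an interior-cell level the active DOFs are, by the merging observation preceding Theorem~\ref{thm:hifde}, exactly the skeletons inherited from the preceding skeletonization level, so $|c|$ obeys the same bound. Hence $k_{\ell} = O(\ell)$ ($\delta = 1$) for every cluster in 2D; $k_{\ell} = O(2^{\ell})$ ($\delta = 2$) for faces and for the edge-supported cell fronts in basic 3D; and $k_{\ell} = O(\ell)$ for \emph{every} cluster --- faces, edges, and cells --- in 3D with edge skeletonization, since the extra edge skeletonization at each level collapses the subsequently merged faces and cell fronts to this smaller size. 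Substituting into
\[
 t_{f} = \sum_{\ell = 0}^{L} O( 2^{d(L - \ell)} k_{\ell}^{3} ), \qquad t_{a/s} = \sum_{\ell = 0}^{L} O( 2^{d(L - \ell)} k_{\ell}^{2} )
\]
and using $\sum_{j \geq 0} j^{p} 2^{-dj} = O(1)$: when $k_{\ell} = O(\ell)$ both series are dominated by the finest levels and sum to $O(2^{dL}) = O(N)$ (so 2D and the edge-skeletonized 3D variant are $O(N)$); when $k_{\ell} = O(2^{\ell})$ and $d = 3$, $t_{f} = \sum_{\ell} O(2^{3L}) = O(L\, 2^{3L}) = O(N \log N)$ while $t_{a/s} = \sum_{\ell} O(2^{3L - \ell}) = O(2^{3L}) = O(N)$. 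This yields every asserted bound; if a tree is used for the partitioning there is an additional $O(N \log N)$ term, but for the uniform grid considered here the levels are explicit and this does not arise.

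I expect the summation itself to be routine; the real content --- and the main obstacle --- is justifying the per-cluster bound $O(k_{\ell}^{3})$ and the size bookkeeping behind it. The $O(k_{\ell}^{3})$ cost depends essentially on accelerated compression, without which compressing the tall global block $A_{c^{\cmp},c}$ would cost $O(N k_{\ell}^{2})$ per cluster. One must also argue that skeletonization does not inflate neighbor sets: in 2D and basic 3D this is immediate, since the skeletons $\sk{c}$ are already coupled through existing Schur-complement interactions and no fill-in is created, but in the edge-skeletonized 3D variant the $O(1)$-thick separators that edge skeletonization generates must be shown to keep $|c| + |c^{\nbr}| = O(k_{\ell})$ --- this is precisely where the rule of thumb of Section~\ref{sec:hifde:3dx} (deferring edge skeletonization until it reduces the active count by at least the resulting separator width) enters, and it is also what drives the improvement from $O(N \log N)$ to $O(N)$ in 3D. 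Finally, every bound here is contingent on the conjectured estimate~\eqref{eqn:inter-rank}; lacking a proof of that, the result should be read as a conjecture with the empirical support of Section~\ref{sec:results}.
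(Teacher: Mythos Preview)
Your proposal is correct and follows essentially the same approach as the paper: set up $t_{f} = \sum_{\ell} O(2^{d(L-\ell)} k_{\ell}^{3})$ and $t_{a/s} = \sum_{\ell} O(2^{d(L-\ell)} k_{\ell}^{2})$ over all integer and fractional levels, choose $\delta$ in \eqref{eqn:inter-rank} according to the intrinsic cluster dimension ($\delta = 1$ in 2D, $\delta = 2$ in basic 3D, $\delta = 1$ in edge-skeletonized 3D), and evaluate the resulting sums directly. The paper's proof is considerably terser---it simply writes down the two sums and asserts that ``the claim follows by direct computation''---so your explicit justification of the per-cluster $O(k_{\ell}^{3})$ cost via accelerated compression, your bookkeeping for why merged clusters inherit the bound \eqref{eqn:inter-rank}, and your discussion of fill-in in the edge-skeletonized variant are all additional detail rather than a different argument.
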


\begin{proof}
 The costs of constructing and applying the factorization are clearly
 \begin{align*}
  t_{f} = \sumprime_{\ell = 0}^{L} O(2^{d(L - \ell)} k_{\ell}^{3}), \quad t_{a/s} = \sumprime_{\ell = 0}^{L} O(2^{d(L - \ell)} k_{\ell}^{2}),
 \end{align*}
 where prime notation denotes summation over all levels, both integer and fractional, and $k_{\ell}$ is as given by \eqref{eqn:inter-rank} for $\delta$ appropriately chosen. In 2D, all fronts are reduced to 1D edges, so $\delta = 1$; in 3D, compression on 2D faces has $\delta = 2$; and in 3D with edge skeletonization, we again have $\delta = 1$. The claim follows by direct computation.
\end{proof}

\section{Numerical Results}
\label{sec:results}
In this section, we demonstrate the efficiency of HIF-DE by reporting numerical results for some benchmark problems in 2D and 3D. All algorithms and examples were implemented in MATLAB and are freely available at \url{https://github.com/klho/FLAM/}. In what follows, we refer to MF as \alg{mf2} in 2D and \alg{mf3} in 3D. Similarly, we call HIF-DE \alg{hifde2} and \alg{hifde3}, respectively, and denote by \alg{hifde3x} the 3D variant with edge skeletonization. All codes are fully adaptive and built on quadtrees in 2D and octrees in 3D. The average block size $|c|$ at level $0$ (and hence the tree depth $L$) was chosen so that roughly half of the initial DOFs are eliminated. In select cases, the first few fractional levels of HIF-DE were skipped to optimize the running time. Diagonal blocks, i.e., $A_{pp}$ in Lemma \ref{lem:sparse-elim}, were factored using the Cholesky decomposition if $A$ is SPD and the (partially pivoted) LDL decomposition otherwise.

For each example, the following are given:
\begin{itemize}
 \item
  $\epsilon$: relative precision of the ID;
 \item
  $N$: total number of DOFs in the problem;
 \item
  $|s_{L}|$: number of active DOFs remaining at the highest level;
 \item
  $t_{f}$: wall clock time for constructing the factorization $F$ in seconds;
 \item
  $m_{f}$: memory required to store $F$ in GB;
 \item
  $t_{a/s}$: wall clock time for applying $F$ or $F^{-1}$ in seconds;
 \item
  $e_{a}$: {\it a posteriori} estimate of $\| A - F \| / \| A \|$ (see below);
 \item
  $e_{s}$: {\it a posteriori} estimate of $\| I - A F^{-1} \| \geq \| A^{-1} - F^{-1} \| / \| A^{-1} \|$;
 \item
  $n_{i}$: number of iterations to solve \eqref{eqn:linear-system} using CG \cite{hestenes:1952:j-res-nat-bur-stand,van-der-vorst:1992:siam-j-sci-stat-comput} (if SPD) or GMRES \cite{saad:1986:siam-j-sci-stat-comput} with preconditioner $F^{-1}$ to a tolerance of $10^{-12}$, where $f$ is a standard uniform random vector.
\end{itemize}
We also compare against MF, which is numerically exact.

The operator errors $e_{a}$ and $e_{s}$ were estimated using power iteration with a standard uniform random start vector \cite{dixon:1983:siam-j-numer-anal,kuczynski:1992:siam-j-matrix-anal-appl} and a convergence criterion of $10^{-2}$ relative precision in the matrix norm. This has a small probability of underestimating the error but seems to be quite robust in practice.

For simplicity, all PDEs were defined over $\Omega = (0, 1)^{d}$ with (arbitrary) Dirichlet boundary conditions as in Section \ref{sec:mf}, discretized on a uniform $n \times n$ or $n \times n \times n$ mesh using second-order central differences via the five-point stencil in 2D and the seven-point stencil in 3D.

All computations were performed in MATLAB R2010b on a single core (without parallelization) of an Intel Xeon E7-4820 CPU at 2.0 GHz on a 64-bit Linux server with 256 GB of RAM.

\subsection{Two Dimensions}
We begin first in 2D, where we present three examples.

\subsubsection*{Example 1}
Consider \eqref{eqn:pde} with $a(x) \equiv 1$, $b(x) \equiv 0$, and $\Omega = (0, 1)^{2}$, i.e., a simple Laplacian in the unit square. The resulting matrix $A$ is SPD, which we factored using both \alg{mf2} and \alg{hifde2} at $\epsilon = 10^{-6}$, $10^{-9}$, and $10^{-12}$ (the compression tolerances are for HIF-DE only). The data are summarized in Tables \ref{tab:fd_square1-f} and \ref{tab:fd_square1-a} with scaling results shown in Figure \ref{fig:fd_square1}.

\begin{table}
 \caption{Factorization results for Example 1.}
 \label{tab:fd_square1-f}
 \scriptsize
 \begin{tabular}{cr|rcc|rcc}
  \toprule
  & & \multicolumn{3}{|c|}{\alg{mf2}} & \multicolumn{3}{|c}{\alg{hifde2}}\\
  $\epsilon$ & \multicolumn{1}{c|}{$N$} & \multicolumn{1}{|c}{$|s_{L}|$} & $t_{f}$ & $m_{f}$ & \multicolumn{1}{|c}{$|s_{L}|$} & $t_{f}$ & $m_{f}$\\
  \midrule
  \multirow{4}{*}{$10^{-06}$} & $1023^{2}$ & \tabdash &        --- &        --- &     $56$ & $5.5$e$+1$ & $7.9$e$-1$\\
                              & $2047^{2}$ & \tabdash &        --- &        --- &     $57$ & $2.4$e$+2$ & $3.2$e$+0$\\
                              & $4095^{2}$ & \tabdash &        --- &        --- &     $57$ & $1.0$e$+3$ & $1.3$e$+1$\\
                              & $8191^{2}$ & \tabdash &        --- &        --- &     $52$ & $4.0$e$+3$ & $5.1$e$+1$\\
  \midrule
  \multirow{4}{*}{$10^{-09}$} & $1023^{2}$ & \tabdash &        --- &        --- &     $85$ & $6.1$e$+1$ & $8.2$e$-1$\\
                              & $2047^{2}$ & \tabdash &        --- &        --- &     $93$ & $2.7$e$+2$ & $3.3$e$+0$\\
                              & $4095^{2}$ & \tabdash &        --- &        --- &     $99$ & $1.1$e$+3$ & $1.3$e$+1$\\
                              & $8191^{2}$ & \tabdash &        --- &        --- &    $102$ & $4.5$e$+3$ & $5.3$e$+1$\\
  \midrule
  \multirow{4}{*}{$10^{-12}$} & $1023^{2}$ & \tabdash &        --- &        --- &    $114$ & $6.7$e$+1$ & $8.4$e$-1$\\
                              & $2047^{2}$ & \tabdash &        --- &        --- &    $125$ & $2.9$e$+2$ & $3.4$e$+0$\\
                              & $4095^{2}$ & \tabdash &        --- &        --- &    $134$ & $1.3$e$+3$ & $1.4$e$+1$\\
                              & $8191^{2}$ & \tabdash &        --- &        --- &    $144$ & $5.1$e$+3$ & $5.5$e$+1$\\
  \midrule
  \multirow{3}{*}{---}        & $1023^{2}$ &   $2045$ & $8.6$e$+1$ & $1.1$e$+0$ & \tabdash &        --- &        ---\\
                              & $2047^{2}$ &   $4093$ & $4.5$e$+2$ & $4.8$e$+0$ & \tabdash &        --- &        ---\\
                              & $4095^{2}$ &   $8189$ & $2.5$e$+3$ & $2.1$e$+1$ & \tabdash &        --- &        ---\\
  \bottomrule
 \end{tabular}
\end{table}

\begin{table}
 \caption{Matrix application results for Example 1.}
 \label{tab:fd_square1-a}
 \scriptsize
 \begin{tabular}{cr|c|cccr}
  \toprule
  & & \multicolumn{1}{|c|}{\alg{mf2}} & \multicolumn{4}{|c}{\alg{hifde2}}\\
  $\epsilon$ & \multicolumn{1}{c|}{$N$} & $t_{a/s}$ & $t_{a/s}$ & $e_{a}$ & $e_{s}$ & \multicolumn{1}{c}{$n_{i}$}\\
  \midrule
  \multirow{4}{*}{$10^{-06}$} & $1023^{2}$ &        --- & $2.1$e$+0$ & $8.3$e$-06$ & $2.4$e$-03$ &  $6$\\
                              & $2047^{2}$ &        --- & $9.1$e$+0$ & $2.1$e$-05$ & $1.5$e$-02$ &  $7$\\
                              & $4095^{2}$ &        --- & $3.9$e$+1$ & $8.4$e$-05$ & $2.6$e$-01$ & $13$\\
                              & $8191^{2}$ &        --- & $1.8$e$+2$ & $1.1$e$-04$ & $6.3$e$-01$ & $15$\\
  \midrule
  \multirow{4}{*}{$10^{-09}$} & $1023^{2}$ &        --- & $2.1$e$+0$ & $5.5$e$-09$ & $8.7$e$-07$ &  $4$\\
                              & $2047^{2}$ &        --- & $8.6$e$+0$ & $1.4$e$-08$ & $6.5$e$-06$ &  $4$\\
                              & $4095^{2}$ &        --- & $3.8$e$+1$ & $2.9$e$-08$ & $2.0$e$-05$ &  $4$\\
                              & $8191^{2}$ &        --- & $1.9$e$+2$ & $5.3$e$-08$ & $1.0$e$-04$ &  $3$\\
  \midrule
  \multirow{4}{*}{$10^{-12}$} & $1023^{2}$ &        --- & $2.1$e$+0$ & $5.5$e$-12$ & $8.0$e$-10$ &  $3$\\
                              & $2047^{2}$ &        --- & $8.9$e$+0$ & $9.4$e$-12$ & $3.0$e$-09$ &  $3$\\
                              & $4095^{2}$ &        --- & $4.2$e$+1$ & $3.2$e$-11$ & $2.2$e$-08$ &  $3$\\
                              & $8191^{2}$ &        --- & $1.8$e$+2$ & $6.3$e$-11$ & $6.5$e$-08$ &  $3$\\
  \midrule
  \multirow{3}{*}{---}        & $1023^{2}$ & $2.4$e$+0$ &        --- &         --- &         --- &  ---\\
                              & $2047^{2}$ & $1.0$e$+1$ &        --- &         --- &         --- &  ---\\
                              & $4095^{2}$ & $4.4$e$+1$ &        --- &         --- &         --- &  ---\\
  \bottomrule
 \end{tabular}
\end{table}

\begin{figure}
 \includegraphics{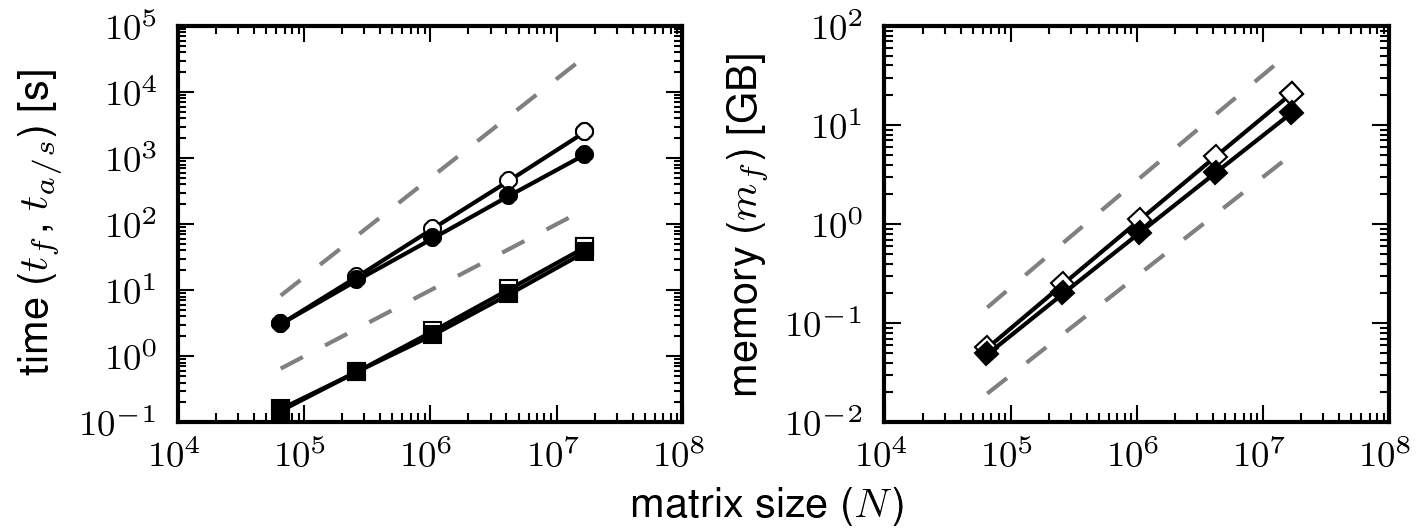}
 \caption{Scaling results for Example 1. Wall clock times $t_{f}$ ($\circ$) and $t_{a/s}$ ($\Box$) and storage requirements $m_{f}$ ($\diamond$) are shown for \alg{mf2} (white) and \alg{hifde2} (black) at precision $\epsilon = 10^{-9}$. Dotted lines denote extrapolated values. Included also are reference scalings (gray dashed lines) of $O(N)$ and $O(N^{3/2})$ (left, from bottom to top), and $O(N)$ and $O(N \log N)$ (right). The lines for $t_{a/s}$ (bottom left) lie nearly on top of each other.}
 \label{fig:fd_square1}
\end{figure}

It is evident that $|s_{L}| \sim k_{L}$ behaves as predicted, with HIF-DE achieving significant compression over MF (but, of course, at the cost of introducing approximation error). Consequently, we find strong support for asymptotic complexities consistent with Theorems \ref{thm:mf} and \ref{thm:hifde}, though MF scales much better than predicted due to its favorable constants. We remark that obtaining a speedup in 2D is not our primary goal since MF is already so efficient in this regime. Still, we see a modest increase in performance and memory savings that allow us to run HIF-DE up to $N = 8191^{2}$, for which MF was not successful.

For all problem sizes tested, $t_{f}$ and $m_{f}$ are always smaller for HIF-DE, though $t_{a/s}$ is quite comparable. This is because $t_{a/s}$ is dominated by memory access (at least in our current implementation), which also explains its relative insensitivity to $\epsilon$. Furthermore, we observe that $t_{a/s} \ll t_{f}$ for both methods, which makes them ideally suited to systems involving multiple right-hand sides.

The forward approximation error $e_{a} = O(\epsilon)$ for all $N$ and seems to increase only mildly with $N$. This indicates that the local accuracy of the ID provides a good estimate of the overall accuracy of the algorithm, which is not easy to prove since the multilevel matrix factors constituting $F$ are not orthogonal. On the other hand, we expect the inverse approximation error to scale as $e_{s} = O(\kappa (A) e_{a})$, where $\kappa (A) = O(N)$ for this example, and indeed we see that $e_{s}$ is much larger due to ill-conditioning. When using $F^{-1}$ to precondition CG, however, the number of iterations required is always very small. This indicates that $F^{-1}$ is a highly effective preconditioner.

\subsubsection*{Example 2}
Consider now the same setup as in Example 1 but with $a(x)$ a quantized high-contrast random field defined as follows:
\begin{enumerate}
 \itemsep 1ex
 \item
  Initialize by sampling each staggered grid point $a_{j}$ from the standard uniform distribution.
 \item
  Impose some correlation structure by convolving $\{ a_{j} \}$ with an isotropic Gaussian of width $4h$.
 \item
  Quantize by setting
  \begin{align*}
   a_{j} =
   \begin{cases}
    10^{-2}, & a_{j} \leq \mu\\
    10^{+2}, & a_{j} > \mu,
   \end{cases}
  \end{align*}
  where $\mu$ is the median of $\{ a_{j} \}$.
\end{enumerate}
Figure \ref{fig:contrast-field} shows a sample realization of such a high-contrast random field in 2D.
\begin{figure}
 \includegraphics[width=1.5in]{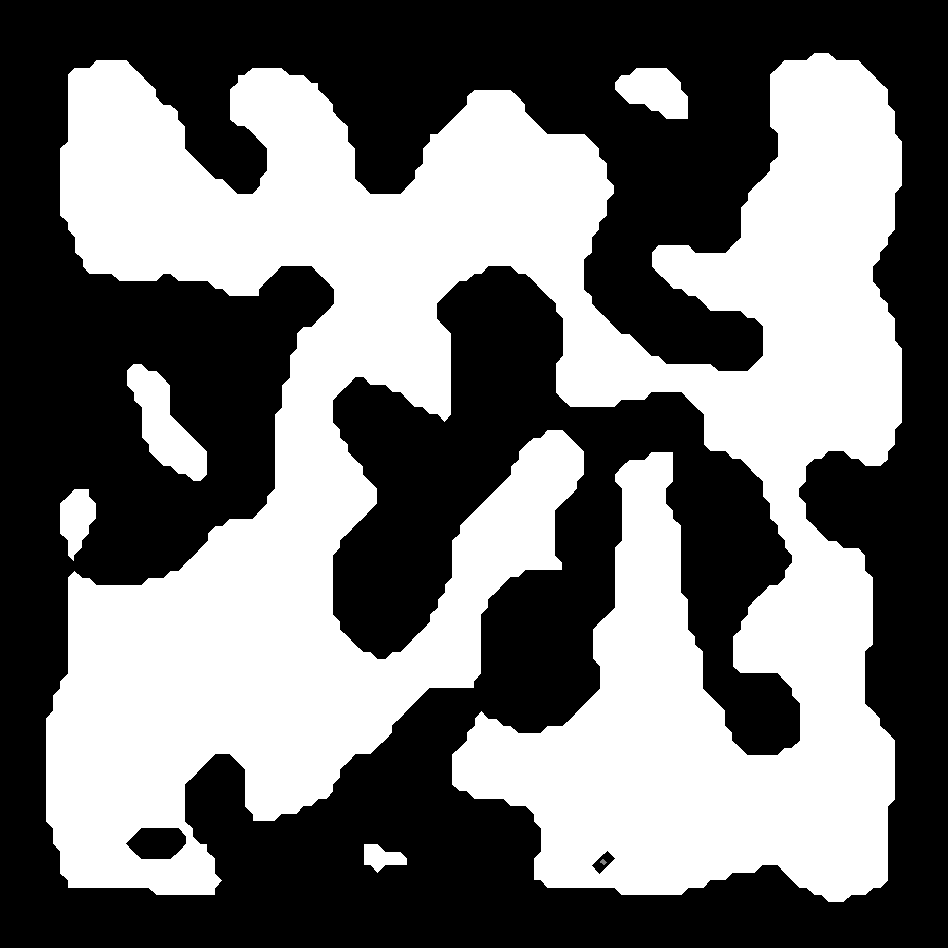}
 \caption{Sample realization of a quantized high-contrast random field in 2D.}
 \label{fig:contrast-field}
\end{figure}
The matrix $A$ now has condition number $\kappa (A) = O(\rho N)$, where $\rho = 10^{4}$ is the contrast ratio. Such high-contrast problems are typically extremely difficult to solve by iteration. Data for \alg{mf2} and \alg{hifde2} at $\epsilon = 10^{-9}$ and $10^{-12}$ are given in Tables \ref{tab:fd_square2-f} and \ref{tab:fd_square2-a}.

\begin{table}
 \caption{Factorization results for Example 2.}
 \label{tab:fd_square2-f}
 \scriptsize
 \begin{tabular}{cr|rcc|rcc}
  \toprule
  & & \multicolumn{3}{|c|}{\alg{mf2}} & \multicolumn{3}{|c}{\alg{hifde2}}\\
  $\epsilon$ & \multicolumn{1}{c|}{$N$} & \multicolumn{1}{|c}{$|s_{L}|$} & $t_{f}$ & $m_{f}$ & \multicolumn{1}{|c}{$|s_{L}|$} & $t_{f}$ & $m_{f}$\\
  \midrule
  \multirow{4}{*}{$10^{-09}$} & $1023^{2}$ & \tabdash &        --- &        --- &     $97$ & $6.5$e$+1$ & $8.3$e$-1$\\
                              & $2047^{2}$ & \tabdash &        --- &        --- &    $110$ & $2.8$e$+2$ & $3.3$e$+0$\\
                              & $4095^{2}$ & \tabdash &        --- &        --- &    $113$ & $1.2$e$+3$ & $1.3$e$+1$\\
                              & $8191^{2}$ & \tabdash &        --- &        --- &    $141$ & $4.6$e$+3$ & $5.4$e$+1$\\
  \midrule
  \multirow{4}{*}{$10^{-12}$} & $1023^{2}$ & \tabdash &        --- &        --- &    $134$ & $7.4$e$+1$ & $8.7$e$-1$\\
                              & $2047^{2}$ & \tabdash &        --- &        --- &    $148$ & $3.2$e$+2$ & $3.5$e$+0$\\
                              & $4095^{2}$ & \tabdash &        --- &        --- &    $160$ & $1.4$e$+3$ & $1.4$e$+1$\\
                              & $8191^{2}$ & \tabdash &        --- &        --- &    $191$ & $5.5$e$+3$ & $5.7$e$+1$\\
  \midrule
  \multirow{3}{*}{---}        & $1023^{2}$ &   $2045$ & $8.4$e$+1$ & $1.1$e$+0$ & \tabdash &        --- &        ---\\
                              & $2047^{2}$ &   $4093$ & $4.6$e$+2$ & $4.8$e$+0$ & \tabdash &        --- &        ---\\
                              & $4095^{2}$ &   $8189$ & $2.5$e$+3$ & $2.1$e$+1$ & \tabdash &        --- &        ---\\
  \bottomrule
 \end{tabular}
\end{table}

\begin{table}
 \caption{Matrix application results for Example 2.}
 \label{tab:fd_square2-a}
 \scriptsize
 \begin{tabular}{cr|c|cccr}
  \toprule
  & & \multicolumn{1}{|c|}{\alg{mf2}} & \multicolumn{4}{|c}{\alg{hifde2}}\\
  $\epsilon$ & \multicolumn{1}{c|}{$N$} & $t_{a/s}$ & $t_{a/s}$ & $e_{a}$ & $e_{s}$ & \multicolumn{1}{c}{$n_{i}$}\\
  \midrule
  \multirow{4}{*}{$10^{-09}$} & $1023^{2}$ &        --- & $2.3$e$+0$ & $3.1$e$-09$ & $2.0$e$-4$ &  $3$\\
                              & $2047^{2}$ &        --- & $9.3$e$+0$ & $2.5$e$-09$ & $2.4$e$-4$ &  $3$\\
                              & $4095^{2}$ &        --- & $3.9$e$+1$ & $3.4$e$-08$ & $3.1$e$-4$ &  $8$\\
                              & $8191^{2}$ &        --- & $1.9$e$+2$ & $4.5$e$-09$ & $1.2$e$-3$ &  $4$\\
  \midrule
  \multirow{4}{*}{$10^{-12}$} & $1023^{2}$ &        --- & $2.3$e$+0$ & $1.8$e$-12$ & $1.7$e$-7$ &  $2$\\
                              & $2047^{2}$ &        --- & $8.9$e$+0$ & $2.3$e$-12$ & $3.0$e$-7$ &  $2$\\
                              & $4095^{2}$ &        --- & $4.0$e$+1$ & $3.5$e$-12$ & $5.8$e$-7$ &  $2$\\
                              & $8191^{2}$ &        --- & $1.9$e$+2$ & $4.5$e$-12$ & $6.0$e$-7$ &  $2$\\
  \midrule
  \multirow{3}{*}{---}        & $1023^{2}$ & $2.5$e$+0$ &        --- &         --- &        --- &  ---\\
                              & $2047^{2}$ & $1.0$e$+1$ &        --- &         --- &        --- &  ---\\
                              & $4095^{2}$ & $4.1$e$+1$ &        --- &         --- &        --- &  ---\\
  \bottomrule
 \end{tabular}
\end{table}

As expected, factorization results for MF are essentially the same as those in Example 1 since the elimination procedure is identical. Results are also very similar for HIF-DE, with only slightly increased skeleton sizes, presumably to resolve the more detailed structure of $a(x)$. Thus, high-contrast problems do not appear to pose any challenge. However, $e_{s}$ naturally suffers due to the additional ill-conditioning, though $F^{-1}$ remains a very good preconditioner for CG in all cases tested.

\subsubsection*{Example 3}
We then turn to the Helmholtz equation \eqref{eqn:pde} with $a(x) \equiv 1$ and $b(x) \equiv -k^{2}$, where $k = 2 \pi \kappa$ is the wave frequency for $\kappa$ the number of wavelengths in $\Omega$. We kept a fixed number of $32$ DOFs per wavelength by increasing $k$ with $n = \sqrt{N}$. The resulting matrix is {\em indefinite} and was factored using both \alg{mf2} and \alg{hifde2} with $\kappa = 32$, $64$, and $128$ at $\epsilon = 10^{-6}$, $10^{-9}$, and $10^{-12}$. Since $A$ is no longer SPD, $F^{-1}$ now applies as a preconditioner for GMRES. The data are summarized in Tables \ref{tab:fd_square3-f} and \ref{tab:fd_square3-a} with scaling results in Figure \ref{fig:fd_square3}.

\begin{table}
 \caption{Factorization results for Example 3.}
 \label{tab:fd_square3-f}
 \scriptsize
 \begin{tabular}{crr|rcc|rcc}
  \toprule
  & & & \multicolumn{3}{|c|}{\alg{mf2}} & \multicolumn{3}{|c}{\alg{hifde2}}\\
  $\epsilon$ & \multicolumn{1}{c}{$N$} & \multicolumn{1}{c|}{$\kappa$} & \multicolumn{1}{|c}{$|s_{L}|$} & $t_{f}$ & $m_{f}$ & \multicolumn{1}{|c}{$|s_{L}|$} & $t_{f}$ & $m_{f}$\\
  \midrule
  \multirow{3}{*}{$10^{-06}$} & $1023^{2}$ &  $32$ & \tabdash &        --- &        --- &    $156$ & $5.7$e$+1$ & $1.2$e$+0$\\
                              & $2047^{2}$ &  $64$ & \tabdash &        --- &        --- &    $271$ & $2.4$e$+2$ & $4.8$e$+0$\\
                              & $4095^{2}$ & $128$ & \tabdash &        --- &        --- &    $408$ & $1.0$e$+3$ & $1.9$e$+1$\\
  \midrule
  \multirow{3}{*}{$10^{-09}$} & $1023^{2}$ &  $32$ & \tabdash &        --- &        --- &    $180$ & $6.2$e$+1$ & $1.2$e$+0$\\
                              & $2047^{2}$ &  $64$ & \tabdash &        --- &        --- &    $286$ & $2.7$e$+2$ & $4.9$e$+0$\\
                              & $4095^{2}$ & $128$ & \tabdash &        --- &        --- &    $442$ & $1.2$e$+3$ & $2.0$e$+1$\\
  \midrule
  \multirow{3}{*}{$10^{-12}$} & $1023^{2}$ &  $32$ & \tabdash &        --- &        --- &    $207$ & $6.9$e$+1$ & $1.3$e$+0$\\
                              & $2047^{2}$ &  $64$ & \tabdash &        --- &        --- &    $310$ & $3.0$e$+2$ & $5.1$e$+0$\\
                              & $4095^{2}$ & $128$ & \tabdash &        --- &        --- &    $482$ & $1.3$e$+3$ & $2.0$e$+1$\\
  \midrule
  \multirow{3}{*}{---}        & $1023^{2}$ &  $32$ &   $2045$ & $1.1$e$+2$ & $1.6$e$+0$ & \tabdash &        --- &        ---\\
                              & $2047^{2}$ &  $64$ &   $4093$ & $7.2$e$+2$ & $7.1$e$+0$ & \tabdash &        --- &        ---\\
                              & $4095^{2}$ & $128$ &   $8189$ & $4.9$e$+3$ & $3.0$e$+1$ & \tabdash &        --- &        ---\\
  \bottomrule
 \end{tabular}
\end{table}

\begin{table}
 \caption{Matrix application results for Example 3.}
 \label{tab:fd_square3-a}
 \scriptsize
 \begin{tabular}{crr|c|cccr}
  \toprule
  & & & \multicolumn{1}{|c|}{\alg{mf2}} & \multicolumn{4}{|c}{\alg{hifde2}}\\
  $\epsilon$ & \multicolumn{1}{c}{$N$} & \multicolumn{1}{c|}{$\kappa$} & $t_{a/s}$ & $t_{a/s}$ & $e_{a}$ & $e_{s}$ & \multicolumn{1}{c}{$n_{i}$}\\
  \midrule
  \multirow{3}{*}{$10^{-06}$} & $1023^{2}$ &  $32$ &        --- & $2.4$e$+0$ & $5.5$e$-06$ & $3.1$e$-3$ &  $4$\\
                              & $2047^{2}$ &  $64$ &        --- & $9.7$e$+0$ & $6.8$e$-06$ & $7.4$e$-3$ &  $7$\\
                              & $4095^{2}$ & $128$ &        --- & $4.0$e$+1$ & $3.7$e$-05$ & $2.3$e$-2$ & $10$\\
  \midrule
  \multirow{3}{*}{$10^{-09}$} & $1023^{2}$ &  $32$ &        --- & $2.3$e$+0$ & $3.8$e$-09$ & $2.7$e$-6$ &  $2$\\
                              & $2047^{2}$ &  $64$ &        --- & $9.5$e$+0$ & $5.9$e$-09$ & $2.9$e$-5$ &  $6$\\
                              & $4095^{2}$ & $128$ &        --- & $3.7$e$+1$ & $3.5$e$-08$ & $8.5$e$-6$ &  $6$\\
  \midrule
  \multirow{3}{*}{$10^{-12}$} & $1023^{2}$ &  $32$ &        --- & $2.2$e$+0$ & $4.8$e$-12$ & $3.9$e$-9$ &  $2$\\
                              & $2047^{2}$ &  $64$ &        --- & $9.6$e$+0$ & $5.7$e$-12$ & $1.1$e$-8$ &  $2$\\
                              & $4095^{2}$ & $128$ &        --- & $4.3$e$+1$ & $6.8$e$-11$ & $1.7$e$-8$ &  $2$\\
  \midrule
  \multirow{3}{*}{---}        & $1023^{2}$ &  $32$ & $2.6$e$+0$ &        --- &         --- &        --- &  ---\\
                              & $2047^{2}$ &  $64$ & $1.1$e$+1$ &        --- &         --- &        --- &  ---\\
                              & $4095^{2}$ & $128$ & $4.6$e$+1$ &        --- &         --- &        --- &  ---\\
  \bottomrule
 \end{tabular}
\end{table}

\begin{figure}
 \includegraphics{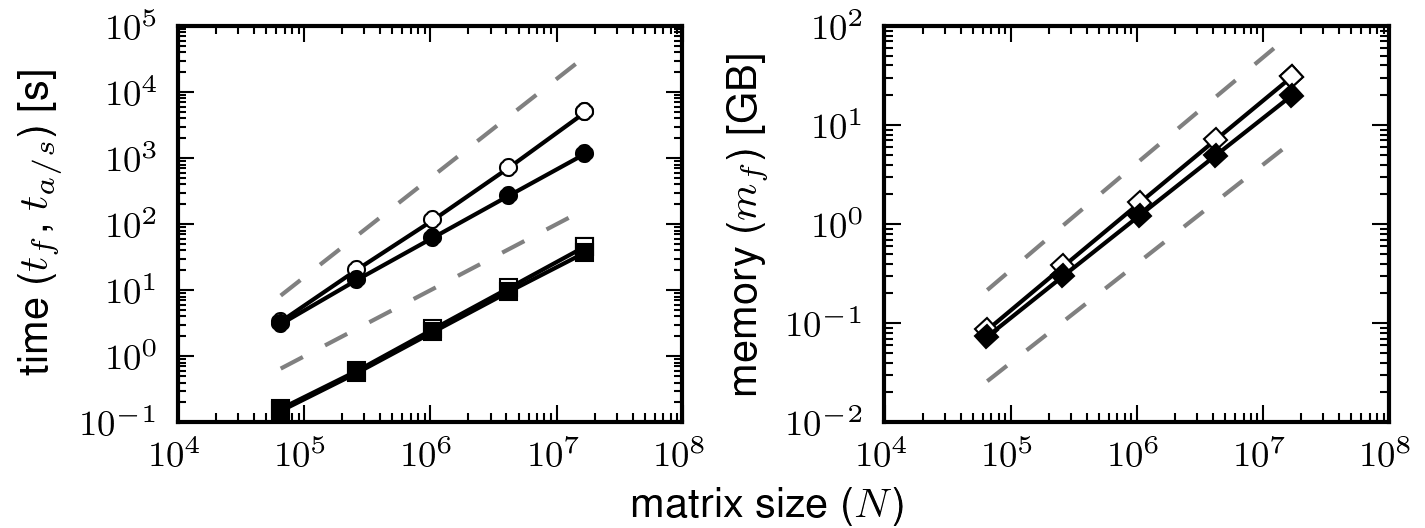}
 \caption{Scaling results for Example 3, comparing \alg{mf2} (white) with \alg{hifde2} (black) at precision $\epsilon = 10^{-9}$; all other notation as in Figure \ref{fig:fd_square1}.}
 \label{fig:fd_square3}
\end{figure}

Overall, the results are very similar to those in Example 1 but with larger skeleton sizes and some extra ill-conditioning of order $O(k)$. We remark, however, that HIF-DE is effective only at low to moderate frequency since the rank structures employed break down as $k \to \infty$. This can be understood by analogy with the Helmholtz Green's function, whose off-diagonal blocks are full-rank in the limit (though other rank structures are possible \cite{engquist:2009:commun-math-sci,engquist:2007:siam-j-sci-comput}). Indeed, we can already see an increasing trend in $|s_{L}|$ beyond that observed in Examples 1 and 2. In the high-frequency regime, the only compression available is due to sparsity, with HIF-DE essentially reducing to MF. Nonetheless, our results reveal no significant apparent failure and demonstrate that HIF-DE achieves linear complexity up to at least $\kappa \sim 10^{2}$.

\subsection{Three Dimensions}
We next present three examples in 3D generalizing each of the 2D cases above.

\subsubsection*{Example 4}
Consider the 3D analogue of Example 1, i.e., \eqref{eqn:pde} with $a(x) \equiv 1$, $b(x) \equiv 0$, and $\Omega = (0, 1)^{3}$. Data for \alg{mf3}, \alg{hifde3}, and \alg{hifde3x} at $\epsilon = 10^{-3}$, $10^{-6}$, and $10^{-9}$ are given in Tables \ref{tab:fd_cube1-f} and \ref{tab:fd_cube1-a} with scaling results shown in Figure \ref{fig:fd_cube1}.

\begin{table}
 \caption{Factorization results for Example 4.}
 \label{tab:fd_cube1-f}
 \scriptsize
 \begin{tabular}{cr|rcc|rcc|rcc}
  \toprule
  & & \multicolumn{3}{|c|}{\alg{mf3}} & \multicolumn{3}{|c|}{\alg{hifde3}} & \multicolumn{3}{|c}{\alg{hifde3x}}\\
  $\epsilon$ & \multicolumn{1}{c|}{$N$} & \multicolumn{1}{|c}{$|s_{L}|$} & $t_{f}$ & $m_{f}$ & \multicolumn{1}{|c}{$|s_{L}|$} & $t_{f}$ & $m_{f}$ & \multicolumn{1}{|c}{$|s_{L}|$} & $t_{f}$ & $m_{f}$\\
  \midrule
  \multirow{3}{*}{$10^{-3}$} &  $31^{3}$ & \tabdash &        --- &        --- &    $950$ & $1.0$e$+1$ & $1.1$e$-1$ &    $331$ & $1.0$e$+1$ & $9.4$e$-2$\\
                             &  $63^{3}$ & \tabdash &        --- &        --- &   $2019$ & $1.9$e$+2$ & $1.2$e$+0$ &    $578$ & $1.7$e$+2$ & $9.6$e$-1$\\
                             & $127^{3}$ & \tabdash &        --- &        --- &   $4153$ & $2.8$e$+3$ & $1.3$e$+1$ &    $890$ & $2.2$e$+3$ & $9.0$e$+0$\\
  \midrule
  \multirow{3}{*}{$10^{-6}$} &  $31^{3}$ & \tabdash &        --- &        --- &   $1568$ & $1.1$e$+1$ & $1.2$e$-1$ &    $931$ & $1.1$e$+1$ & $1.0$e$-1$\\
                             &  $63^{3}$ & \tabdash &        --- &        --- &   $3607$ & $3.0$e$+2$ & $1.7$e$+0$ &   $2466$ & $3.2$e$+2$ & $1.3$e$+0$\\
                             & $127^{3}$ & \tabdash &        --- &        --- &   $7651$ & $6.2$e$+3$ & $2.0$e$+1$ &   $3562$ & $6.2$e$+3$ & $1.6$e$+1$\\
  \midrule
  \multirow{3}{*}{$10^{-9}$} &  $31^{3}$ & \tabdash &        --- &        --- &   $2030$ & $1.3$e$+1$ & $1.3$e$-1$ &   $1495$ & $1.3$e$+1$ & $1.1$e$-1$\\
                             &  $63^{3}$ & \tabdash &        --- &        --- &   $5013$ & $4.3$e$+2$ & $2.0$e$+0$ &   $4295$ & $4.7$e$+2$ & $1.6$e$+0$\\
                             & $127^{3}$ & \tabdash &        --- &        --- &  $11037$ & $1.1$e$+4$ & $2.6$e$+1$ &   $7288$ & $1.1$e$+4$ & $2.1$e$+1$\\
  \midrule
  \multirow{2}{*}{---}       &  $31^{3}$ &   $2791$ & $1.6$e$+1$ & $1.6$e$-1$ & \tabdash &        --- &        --- & \tabdash &        --- &        ---\\
                             &  $63^{3}$ &  $11719$ & $8.2$e$+2$ & $3.0$e$+0$ & \tabdash &        --- &        --- & \tabdash &        --- &        ---\\
  \bottomrule
 \end{tabular}
\end{table}

\begin{table}
 \caption{Matrix application results for Example 4.}
 \label{tab:fd_cube1-a}
 \scriptsize
 \begin{tabular}{cr|c|cccr|cccr}
  \toprule
  & & \multicolumn{1}{|c|}{\alg{mf3}} & \multicolumn{4}{|c|}{\alg{hifde3}} & \multicolumn{4}{|c}{\alg{hifde3x}}\\
  $\epsilon$ & \multicolumn{1}{c|}{$N$} & $t_{a/s}$ & $t_{a/s}$ & $e_{a}$ & $e_{s}$ & \multicolumn{1}{c|}{$n_{i}$} & $t_{a/s}$ & $e_{a}$ & $e_{s}$ & \multicolumn{1}{c}{$n_{i}$}\\
  \midrule
  \multirow{3}{*}{$10^{-3}$} &  $31^{3}$ &        --- & $1.8$e$-1$ & $2.1$e$-03$ & $5.6$e$-2$ &  $7$ & $1.5$e$-1$ & $3.6$e$-03$ & $7.0$e$-2$ &  $8$\\
                             &  $63^{3}$ &        --- & $1.8$e$+0$ & $5.0$e$-03$ & $3.4$e$-1$ & $11$ & $1.3$e$+0$ & $4.3$e$-03$ & $3.3$e$-1$ & $11$\\
                             & $127^{3}$ &        --- & $1.9$e$+1$ & $7.8$e$-03$ & $7.5$e$-1$ & $19$ & $1.2$e$+1$ & $4.8$e$-03$ & $6.8$e$-1$ & $17$\\
  \midrule
  \multirow{3}{*}{$10^{-6}$} &  $31^{3}$ &        --- & $1.9$e$-1$ & $8.5$e$-07$ & $7.5$e$-6$ &  $3$ & $1.4$e$-1$ & $9.8$e$-07$ & $9.8$e$-6$ &  $3$\\
                             &  $63^{3}$ &        --- & $2.1$e$+0$ & $3.9$e$-06$ & $5.8$e$-5$ &  $3$ & $1.4$e$+0$ & $2.5$e$-06$ & $4.4$e$-5$ &  $3$\\
                             & $127^{3}$ &        --- & $2.6$e$+1$ & $2.3$e$-05$ & $1.3$e$-3$ &  $4$ & $1.9$e$+1$ & $9.1$e$-06$ & $2.6$e$-4$ &  $4$\\
  \midrule
  \multirow{3}{*}{$10^{-9}$} &  $31^{3}$ &        --- & $1.5$e$-1$ & $6.1$e$-10$ & $3.4$e$-9$ &  $2$ & $1.6$e$-1$ & $7.4$e$-10$ & $4.0$e$-9$ &  $2$\\
                             &  $63^{3}$ &        --- & $2.0$e$+0$ & $4.0$e$-09$ & $3.5$e$-8$ &  $2$ & $1.7$e$+0$ & $2.0$e$-09$ & $2.1$e$-8$ &  $2$\\
                             & $127^{3}$ &        --- & $3.3$e$+1$ & $1.7$e$-08$ & $4.6$e$-7$ &  $2$ & $2.7$e$+1$ & $6.4$e$-09$ & $1.5$e$-7$ &  $2$\\
  \midrule
  \multirow{2}{*}{---}       &  $31^{3}$ & $2.0$e$-1$ &        --- &         --- &        --- &  --- &        --- &        --- &        --- &  ---\\
                             &  $63^{3}$ & $3.3$e$+0$ &        --- &         --- &        --- &  --- &        --- &        --- &        --- &  ---\\
  \bottomrule
 \end{tabular}
\end{table}

\begin{figure}
 \includegraphics{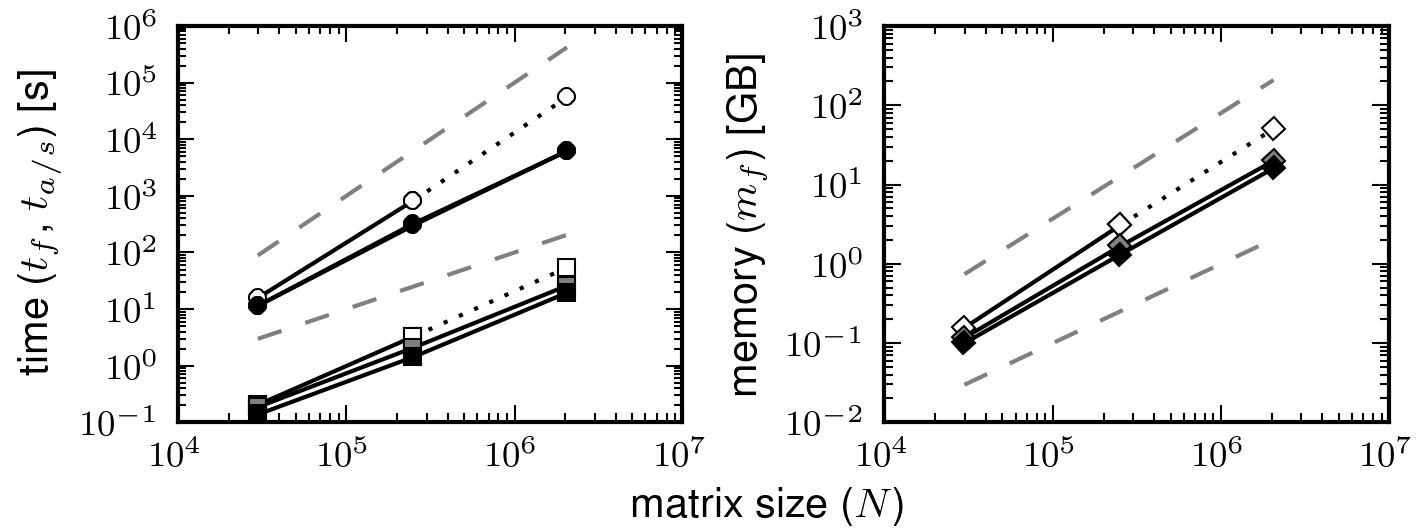}
 \caption{Scaling results for Example 4, comparing \alg{mf3} (white) with \alg{hifde3} (gray) and \alg{hifde3x} (black) at precision $\epsilon = 10^{-6}$. Included also are reference scalings of $O(N)$ and $O(N^{2})$ (left), and $O(N)$ and $O(N^{4/3})$ (right); all other notation as in Figure \ref{fig:fd_square1}. The lines for \alg{hifde3} and \alg{hifde3x} lie nearly on top of each other; for $t_{f}$ (top left), they overlap almost exactly.}
 \label{fig:fd_cube1}
\end{figure}

It is immediate that $t_{f} = O(N^{2})$ and $t_{a/s} = O(N^{4/3})$ for MF, which considerably degrades its performance for large $N$. Indeed, we were unable to run \alg{mf3} for $N = 127^{3}$ because of the excessive memory cost. In contrast, HIF-DE scales much better, with $|s_{L}|$ growing consistently with \eqref{eqn:inter-rank} for both variants. This provides strong evidence for Theorem \ref{thm:hifde}. However, the skeleton size is substantially larger than in 2D, and neither \alg{hifde3} nor \alg{hifde3x} quite achieve quasilinear complexity as predicted: the empiricial scaling for $t_{f}$ for both algorithms at, e.g., $\epsilon = 10^{-6}$ is approximately $O(N^{1.4})$. We believe this to be a consequence of the large interaction ranks, which make the asymptotic regime rather difficult to reach. In parallel with Example 1, $e_{a} = O(\epsilon)$ but $e_{s}$ is somewhat larger due to ill-conditioning. We found $F^{-1}$ to be a very effective preconditioner throughout. There were no significant differences in either computation time or accuracy between \alg{hifde3} and \alg{hifde3x}, though the latter does provide some appreciable memory savings.

\subsubsection*{Example 5}
Now consider the 3D analogue of Example 2, i.e., Example 4 but with $a(x)$ a quantized high-contrast random field as previously defined, extended to 3D in the natural way. Data for \alg{mf3}, \alg{hifde3}, and \alg{hifde3x} at $\epsilon = 10^{-6}$ and $10^{-9}$ are given in Tables \ref{tab:fd_cube2-f} and \ref{tab:fd_cube2-a}.

\begin{table}
 \caption{Factorization results for Example 5.}
 \label{tab:fd_cube2-f}
 \scriptsize
 \begin{tabular}{cr|rcc|rcc|rcc}
  \toprule
  & & \multicolumn{3}{|c|}{\alg{mf3}} & \multicolumn{3}{|c|}{\alg{hifde3}} & \multicolumn{3}{|c}{\alg{hifde3x}}\\
  $\epsilon$ & \multicolumn{1}{c|}{$N$} & \multicolumn{1}{|c}{$|s_{L}|$} & $t_{f}$ & $m_{f}$ & \multicolumn{1}{|c}{$|s_{L}|$} & $t_{f}$ & $m_{f}$ & \multicolumn{1}{|c}{$|s_{L}|$} & $t_{f}$ & $m_{f}$\\
  \midrule
  \multirow{3}{*}{$10^{-6}$} &  $31^{3}$ & \tabdash &        --- &        --- &   $1441$ & $1.1$e$+1$ & $1.1$e$-1$ &    $948$ & $1.1$e$+1$ & $1.0$e$-1$\\
                             &  $63^{3}$ & \tabdash &        --- &        --- &   $3271$ & $2.5$e$+2$ & $1.5$e$+0$ &   $2337$ & $2.8$e$+2$ & $1.2$e$+0$\\
                             & $127^{3}$ & \tabdash &        --- &        --- &   $6679$ & $4.9$e$+3$ & $1.7$e$+1$ &   $3294$ & $4.9$e$+3$ & $1.4$e$+1$\\
  \midrule
  \multirow{3}{*}{$10^{-9}$} &  $31^{3}$ & \tabdash &        --- &        --- &   $1893$ & $1.2$e$+1$ & $1.2$e$-1$ &   $1423$ & $1.3$e$+1$ & $1.1$e$-1$\\
                             &  $63^{3}$ & \tabdash &        --- &        --- &   $4755$ & $3.6$e$+2$ & $1.8$e$+0$ &   $3924$ & $4.0$e$+2$ & $1.4$e$+0$\\
                             & $127^{3}$ & \tabdash &        --- &        --- &  $10913$ & $9.4$e$+3$ & $2.4$e$+1$ &   $7011$ & $9.9$e$+3$ & $1.9$e$+1$\\
  \midrule
  \multirow{2}{*}{---}       &  $31^{3}$ &   $2791$ & $1.5$e$+1$ & $1.6$e$-1$ & \tabdash &        --- &        --- & \tabdash &        --- &        ---\\
                             &  $63^{3}$ &  $11719$ & $8.4$e$+2$ & $3.0$e$+0$ & \tabdash &        --- &        --- & \tabdash &        --- &        ---\\
  \bottomrule
 \end{tabular}
\end{table}

\begin{table}
 \caption{Matrix application results for Example 5.}
 \label{tab:fd_cube2-a}
 \scriptsize
 \begin{tabular}{cr|c|cccr|cccr}
  \toprule
  & & \multicolumn{1}{|c|}{\alg{mf3}} & \multicolumn{4}{|c|}{\alg{hifde3}} & \multicolumn{4}{|c}{\alg{hifde3x}}\\
  $\epsilon$ & \multicolumn{1}{c|}{$N$} & $t_{a/s}$ & $t_{a/s}$ & $e_{a}$ & $e_{s}$ & \multicolumn{1}{c|}{$n_{i}$} & $t_{a/s}$ & $e_{a}$ & $e_{s}$ & \multicolumn{1}{c}{$n_{i}$}\\
  \midrule
  \multirow{3}{*}{$10^{-6}$} &  $31^{3}$ &        --- & $1.8$e$-1$ & $5.1$e$-07$ & $6.1$e$-3$ &  $6$ & $1.6$e$-1$ & $6.4$e$-07$ & $1.1$e$-2$ &  $5$\\
                             &  $63^{3}$ &        --- & $2.0$e$+0$ & $2.1$e$-06$ & $6.4$e$-2$ &  $7$ & $1.6$e$+0$ & $1.5$e$-06$ & $5.8$e$-2$ & $12$\\
                             & $127^{3}$ &        --- & $2.2$e$+1$ & $8.8$e$-06$ & $3.4$e$-1$ & $16$ & $1.6$e$+1$ & $6.0$e$-06$ & $3.3$e$-1$ & $16$\\
  \midrule
  \multirow{3}{*}{$10^{-9}$} &  $31^{3}$ &        --- & $1.9$e$-1$ & $3.3$e$-10$ & $1.5$e$-5$ &  $4$ & $1.4$e$-1$ & $3.8$e$-10$ & $1.3$e$-5$ &  $4$\\
                             &  $63^{3}$ &        --- & $2.2$e$+0$ & $1.6$e$-09$ & $1.7$e$-4$ &  $6$ & $1.8$e$+0$ & $1.9$e$-09$ & $1.7$e$-4$ &  $4$\\
                             & $127^{3}$ &        --- & $3.1$e$+1$ & $1.8$e$-08$ & $3.7$e$-3$ &  $8$ & $2.3$e$+1$ & $1.2$e$-08$ & $3.5$e$-3$ &  $8$\\
  \midrule
  \multirow{2}{*}{---}       &  $31^{3}$ & $2.0$e$-1$ &        --- &         --- &        --- &  --- &        --- &        --- &        --- &  ---\\
                             &  $63^{3}$ & $3.4$e$+0$ &        --- &         --- &        --- &  --- &        --- &        --- &        --- &  ---\\
  \bottomrule
 \end{tabular}
\end{table}

Again, the results are quite similar to those in Example 5, but with $e_{s}$ necessarily larger by a factor of about $\rho$ due to ill-conditioning. There are no evident difficulties arising from the high contrast ratio for either \alg{hifde3} or \alg{hifde3x}.

\subsubsection*{Example 6}
Finally, we consider the 3D analogue of Example 3, where now $k = 2 \pi \kappa$ is increased in proportion to $n = N^{1/3}$ at a fixed resolution of $8$ DOFs per wavelength. The matrix $A$ is once again indefinite, which we factored using \alg{mf3}, \alg{hifde3}, and \alg{hifde3x} with $\kappa = 4$, $8$, and $16$ at $\epsilon = 10^{-6}$ and $10^{-9}$. The data are summarized in Tables \ref{tab:fd_cube3-f} and \ref{tab:fd_cube3-a} with scaling results in Figure \ref{fig:fd_cube3}.

\begin{table}
 \caption{Factorization results for Example 6.}
 \label{tab:fd_cube3-f}
 \scriptsize
 \begin{tabular}{crr|rcc|rcc|rcc}
  \toprule
  & & & \multicolumn{3}{|c|}{\alg{mf3}} & \multicolumn{3}{|c|}{\alg{hifde3}} & \multicolumn{3}{|c}{\alg{hifde3x}}\\
  $\epsilon$ & \multicolumn{1}{c}{$N$} & \multicolumn{1}{c|}{$\kappa$} & \multicolumn{1}{|c}{$|s_{L}|$} & $t_{f}$ & $m_{f}$ & \multicolumn{1}{|c}{$|s_{L}|$} & $t_{f}$ & $m_{f}$ & \multicolumn{1}{|c}{$|s_{L}|$} & $t_{f}$ & $m_{f}$\\
  \midrule
  \multirow{3}{*}{$10^{-6}$} &  $31^{3}$ &  $4$ & \tabdash &        --- &        --- &   $1702$ & $2.4$e$+1$ & $1.8$e$-1$ &   $1215$ & $1.8$e$+1$ & $1.5$e$-1$\\
                             &  $63^{3}$ &  $8$ & \tabdash &        --- &        --- &   $4275$ & $8.2$e$+2$ & $2.5$e$+0$ &   $2934$ & $5.1$e$+2$ & $1.9$e$+0$\\
                             & $127^{3}$ & $16$ & \tabdash &        --- &        --- &  $10683$ & $1.9$e$+4$ & $3.0$e$+1$ &   $4071$ & $8.1$e$+3$ & $2.2$e$+1$\\
  \midrule
  \multirow{3}{*}{$10^{-9}$} &  $31^{3}$ &  $4$ & \tabdash &        --- &        --- &   $2144$ & $3.7$e$+1$ & $2.1$e$-1$ &   $1685$ & $2.5$e$+1$ & $1.7$e$-1$\\
                             &  $63^{3}$ &  $8$ & \tabdash &        --- &        --- &   $5614$ & $1.3$e$+3$ & $3.1$e$+0$ &   $4684$ & $9.3$e$+2$ & $2.3$e$+0$\\
                             & $127^{3}$ & $16$ & \tabdash &        --- &        --- &  $14088$ & $3.4$e$+4$ & $3.9$e$+1$ &   $7806$ & $1.7$e$+4$ & $2.9$e$+1$\\
  \midrule
  \multirow{2}{*}{---}       &  $31^{3}$ &  $4$ &   $2791$ & $6.4$e$+1$ & $2.5$e$-1$ & \tabdash &        --- &        --- & \tabdash &        --- &        ---\\
                             &  $63^{3}$ &  $8$ &  $11719$ & $5.5$e$+3$ & $4.9$e$+0$ & \tabdash &        --- &        --- & \tabdash &        --- &        ---\\
  \bottomrule
 \end{tabular}
\end{table}

\begin{table}
 \caption{Matrix application results for Example 6.}
 \label{tab:fd_cube3-a}
 \scriptsize
 \begin{tabular}{crr|c|cccr|cccr}
  \toprule
  & & & \multicolumn{1}{|c|}{\alg{mf3}} & \multicolumn{4}{|c|}{\alg{hifde3}} & \multicolumn{4}{|c}{\alg{hifde3x}}\\
  $\epsilon$ & \multicolumn{1}{c}{$N$} & \multicolumn{1}{c|}{$\kappa$} & $t_{a/s}$ & $t_{a/s}$ & $e_{a}$ & $e_{s}$ & \multicolumn{1}{c|}{$n_{i}$} & $t_{a/s}$ & $e_{a}$ & $e_{s}$ & \multicolumn{1}{c}{$n_{i}$}\\
  \midrule
  \multirow{3}{*}{$10^{-6}$} &  $31^{3}$ &  $4$ &        --- & $1.6$e$-1$ & $8.2$e$-07$ & $3.4$e$-6$ & $3$ & $1.7$e$-1$ & $6.5$e$-07$ & $1.3$e$-5$ & $3$\\
                             &  $63^{3}$ &  $8$ &        --- & $2.0$e$+0$ & $2.4$e$-06$ & $3.3$e$-5$ & $3$ & $1.8$e$+0$ & $2.0$e$-06$ & $4.5$e$-5$ & $3$\\
                             & $127^{3}$ & $16$ &        --- & $3.0$e$+1$ & $3.7$e$-06$ & $1.3$e$-3$ & $8$ & $2.1$e$+1$ & $9.7$e$-06$ & $4.7$e$-4$ & $4$\\
  \midrule
  \multirow{3}{*}{$10^{-9}$} &  $31^{3}$ &  $4$ &        --- & $1.9$e$-1$ & $5.0$e$-10$ & $2.4$e$-9$ & $2$ & $1.7$e$-1$ & $5.9$e$-10$ & $1.1$e$-8$ & $2$\\
                             &  $63^{3}$ &  $8$ &        --- & $2.4$e$+0$ & $1.7$e$-09$ & $2.1$e$-8$ & $2$ & $2.2$e$+0$ & $2.0$e$-09$ & $3.2$e$-8$ & $2$\\
                             & $127^{3}$ & $16$ &        --- & $3.3$e$+1$ & $3.3$e$-09$ & $1.2$e$-6$ & $6$ & $2.6$e$+1$ & $5.2$e$-09$ & $1.4$e$-7$ & $2$\\
  \midrule
  \multirow{2}{*}{---}       &  $31^{3}$ &  $4$ & $2.1$e$-1$ &        --- &         --- &        --- &  --- &        --- &        --- &        --- &  ---\\
                             &  $63^{3}$ &  $8$ & $2.6$e$+0$ &        --- &         --- &        --- &  --- &        --- &        --- &        --- &  ---\\
  \bottomrule
 \end{tabular}
\end{table}

\begin{figure}
 \includegraphics{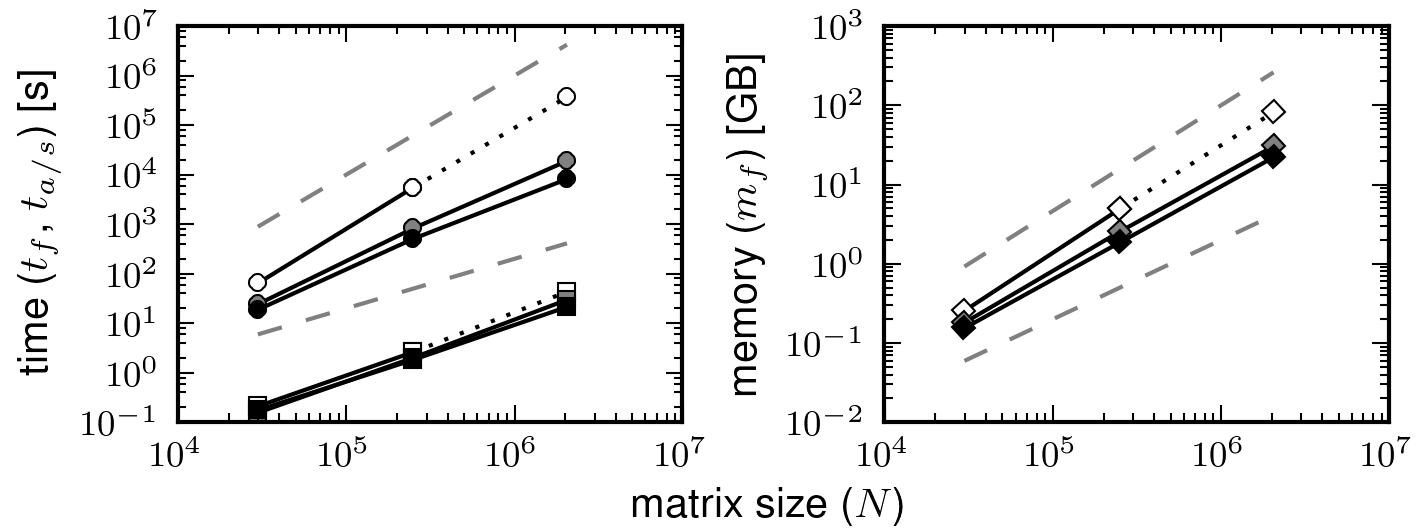}
 \caption{Scaling results for Example 6, comparing \alg{mf3} (white) with \alg{hifde3} (gray) and \alg{hifde3x} (black) at precision $\epsilon = 10^{-6}$; all other notation as in Figure \ref{fig:fd_cube1}.}
 \label{fig:fd_cube3}
\end{figure}

All algorithms behave essentially as expected, but the skeleton size is substantially larger for \alg{hifde3} than in the Laplace case (Example 4). The same increase, however, was not observed for \alg{hifde3x}. We take this to imply that the 1D nature of \alg{hifde3x} is less sensitive to the oscillatory character of the Helmholtz problem, at least at low frequency, though any definitive conclusion is difficult to draw. The empirical complexity at $\epsilon = 10^{-6}$ is now $t_{f} \simeq O(N^{1.5})$ for \alg{hifde3} and $O(N^{1.3})$ for \alg{hifde3x}. Both solvers remain quite favorable compared to \alg{mf3} and give very good preconditioners for GMRES.

\section{Generalizations and Conclusions}
\label{sec:conclusion}
In this paper, we have introduced HIF-DE for the efficient factorization of discretized elliptic partial differential operators in 2D and 3D. HIF-DE combines MF \cite{duff:1983:acm-trans-math-software,george:1973:siam-j-numer-anal,liu:1992:siam-rev} with recursive dimensional reduction via frontal skeletonization to construct an approximate generalized LU/LDL decomposition at estimated quasilinear cost. The latter enables significant compression over MF and is critical for improving the asymptotic complexity, while the former is essential for optimally exploiting sparsity and hence for achieving good practical performance. The resulting factorization allows the rapid application of the matrix inverse, which provides a fast direct solver or preconditioner, depending on the accuracy. Furthermore, although we have focused here only on symmetric matrices, our techniques generalize also to the unsymmetric case by defining analogous two-sided elimination operators $R_{p}$ and $S_{p}$ in \eqref{eqn:sparse-elim} as in \cite{ho:comm-pure-appl-math} and by compressing
\begin{align*}
 B_{c} =
 \begin{bmatrix}
  A_{c^{\nbr},c}\\
  A_{c,c^{\nbr}}
 \end{bmatrix}
\end{align*}
instead of just $A_{c^{\nbr},c}$.

While we have reported numerical data only for PDEs with Dirichlet boundary conditions, HIF-DE extends trivially to other types of boundary conditions as well. Preliminary tests with mixed Dirichlet-Neumann conditions reveal no discernable change in performance.

The skeletonization operator at the core of HIF-DE can be interpreted in several ways. For example, we can view it as an approximate local change of basis in order to gain sparsity. Unlike traditional approaches, however, this basis is determined optimally on the fly using the ID. Skeletonization can also be regarded as adaptive numerical upscaling or as implementing specialized restriction and prolongation operators in the context of multigrid methods \cite{brandt:1977:math-comp,hackbusch:1985:springer,xu:1992:siam-rev}.

Although we have presently only considered sparse matrices arising from PDEs, the same basic approach can also be applied to structured dense matrices such as those derived from the integral equation formulations of elliptic PDEs. This is described in detail as algorithm HIF-IE in the companion paper \cite{ho:comm-pure-appl-math}, which uses skeletonization for all compression steps and likewise has quasilinear complexity estimates in both 2D and 3D. In particular, HIF-DE can be viewed as a heavily specialized version of HIF-IE by embedding it into the framework of MF in order to exploit sparsity. The elimination operations in MF can also be seen as a trivial form of skeletonization acting on overlapping subdomains. Indeed, \cite{ho:comm-pure-appl-math} shows that recursive skeletonization \cite{gillman:2012:front-math-china,greengard:2009:acta-numer,ho:2012:siam-j-sci-comput,martinsson:2005:j-comput-phys}, a precursor of HIF-IE based on cell compression, is essentially equivalent to MF.

Some important directions for future research include:
\begin{itemize}
 \itemsep 1ex
 \item
  Obtaining analytical estimates of the interaction rank for SCIs, even for the simple case of the Laplacian. This would enable a much more precise understanding of the complexity of HIF-DE, which has yet to be rigorously established.
 \item
  Parallelizing HIF-DE, which, like MF, is organized according to a tree structure where each node at a given level can be processed independently of the rest. In particular, the frontal matrices are now much more compact, which should support better parallelization, and we anticipate that the overall scheme will have significant impact on practical scientific computing. This is currently in active development.
 \item
  Investigating alternative strategies for reducing skeleton sizes in 3D, which can still be quite large, especially at high precision.
 \item
  Understanding the extent to which our current techniques can be adapted to highly indefinite problems, some of which have a Helmholtz character and possess rank structures of a different type than that exploited here \cite{engquist:2009:commun-math-sci,engquist:2007:siam-j-sci-comput}. Such problems can be very challenging to solve iteratively and present a prime target area for future fast direct solvers.
\end{itemize}







\ack


We would like to thank Jack Poulson for helpful discussions, Lenya Ryzhik for providing computing resources, and the anonymous referees for their careful reading of the manuscript, which have improved the paper tremendously. K.L.H.\ was partially supported by the National Science Foundation under award DMS-1203554. L.Y.\ was partially supported by the National Science Foundation under award DMS-1328230 and the U.S.\ Department of Energy's Advanced Scientific Computing Research program under award DE-FC02-13ER26134/DE-SC0009409.


\frenchspacing
\bibliographystyle{plain}

\end{document}